\theoremstyle{plain}
\newtheorem{thm}{\protect\theoremname}
  \theoremstyle{definition}
  \newtheorem{cond}[thm]{\protect\condname}
  \theoremstyle{plain}
  \newtheorem{lem}[thm]{\protect\lemmaname}
  \newtheorem{prop}[thm]{\protect\propname}
  \newtheorem{prob}{\protect\probname}
  \providecommand{\propname}{Proposition}
  \providecommand{\condname}{Condition}
  \providecommand{\probname}{Problem}
  \providecommand{\definitionname}{Definition}
  \providecommand{\lemmaname}{Lemma}
  \providecommand{\corollaryname}{Corollary}
\providecommand{\theoremname}{Theorem}
\newtheorem{remark}{Remark}
\begin{document}

\title{Multi-Reference Factor Analysis: low-rank covariance estimation under unknown translations}

\author{Boris Landa and Yoel Shkolnisky}

%\maketitle

% \maketitle

% \bigskip

% \noindent Boris Landa \\
% Applied Mathematics Program, Yale University \\
% {\tt boris.landa@yale.edu}

% \bigskip
% \bigskip

% \noindent Yoel Shkolnisky \\
% Department of Applied Mathematics, School of Mathematical Sciences, Tel-Aviv University \\
% {\tt yoelsh@tauex.tau.ac.il}

% \bigskip
% \bigskip

% \begin{center}
% Please address manuscript correspondence to Boris Landa,
% {\tt sboris20@gmail.com}.
% \end{center}
% \newpage

\author{ Boris Landa${^{1,*}}$~~~~Yoel Shkolnisky${^{2}}$\\
\small{${^1}$Program in Applied Mathematics, Yale University}\\
\small{${^2}$Department of Applied Mathematics, School of Mathematical Sciences, Tel-Aviv University}\\
\small{${^*}$Corresponding author. Email: boris.landa@yale.edu}
}

\maketitle

\begin{abstract}
{We consider the problem of estimating the covariance matrix of a random signal observed through unknown translations (modeled by cyclic shifts) and corrupted by noise. Solving this problem allows to discover low-rank structures masked by the existence of translations (which act as nuisance parameters), with direct application to Principal Components Analysis (PCA). We assume that the underlying signal is of length $L$ and follows a standard factor model with mean zero and $r$ normally-distributed factors. To recover the covariance matrix in this case, we propose to employ the second- and fourth-order shift-invariant moments of the signal known as the \textit{power spectrum} and the \textit{trispectrum}. We prove that they are sufficient for recovering the covariance matrix (under a certain technical condition) when $r<\sqrt{L}$. Correspondingly, we provide a polynomial-time procedure for estimating the covariance matrix from many (translated and noisy) observations, where no explicit knowledge of $r$ is required, and prove the procedure's statistical consistency. 
While our results establish that covariance estimation is possible from the {power spectrum} and the {trispectrum} for low-rank covariance matrices, we prove that this is not the case for full-rank covariance matrices.
We conduct numerical experiments that corroborate our theoretical findings, and demonstrate the favorable performance of our algorithms in various settings, including in high levels of noise.}{}
\end{abstract}

\section{Introduction} \label{section:introduction}
Principal Components Analysis (PCA) is a ubiquitous technique in science and engineering, which is used extensively for processing and analyzing large datasets. A standard approach for PCA is to estimate the covariance matrix of the dataset, compute its eigen-decomposition, and then project the data points onto the first several leading eigenvectors (i.e. with the largest corresponding eigenvalues). In various scientific applications, PCA is applied to collections of one-dimensional signals, where the underlying assumption is that these signals are low-rank, in the sense that they reside on (or near) a low-dimensional linear subspace. However, it is often the case that real-world signal measurements are prone to certain group-action deformations, where a common example is that of translations. When different translations are applied to low-rank signals, the resulting covariance matrix loses its low-rank structure (a claim which is made more precise shortly), thus rendering PCA ineffective without first aligning the signals. 
This scenario, where signals are acquired through unknown translations, is encountered for example in radar target classification~\cite{kim2002efficient,zwart2003fast,zyweck1996radar}, chromotographic fingerprinting~\cite{daszykowski2010automated,malmquist1994alignment,tomasi2004correlation}, machine fault diagnosis~\cite{garcia2012fault,liu2011adaptive}, and ECG signal classification~\cite{irvine2008eigenpulse,israel2005ecg}. In these applications, a typical scenario includes collecting a large dataset of signals for analysis and classification, followed by PCA for denoising and dimensionality reduction. For PCA to be effective, the dataset's covariance matrix should be approximately low-rank. Hence, to account for the different translations it is customary to first align the signals in the dataset. Numerous methods exist for the task of signal alignment, where standard approaches include pair-wise registration, and matching with a predefined template. Yet, it is important to stress that if the signals admit significant heterogeneity (i.e. inherent variability not associated with noise) then alignment is not well-defined, as the concept of aligning two very different patterns is meaningless. Moreover, signal alignment -- even between identical copies -- cannot be achieved in high levels of noise~\cite{bendory2017bispectrum,weiss1983fundamental}. Motivated by the above-mentioned limitations of signal alignment, we consider the problem of accurately estimating the covariance matrix of low-rank signals from their translated and noisy observations. 

\subsection{The setting}
We consider the following model for an observed signal $y\in \mathbb{C}^L$:
\begin{equation}
y = R_s\left\{x\right\} + \eta,  \label{eq:MRFA model def}
\end{equation}
where $x\in\mathbb{C}^L$ is the underlying signal (to be described shortly), $\eta$ is a noise vector with either $\eta\sim\mathcal{N}(0,\sigma^2 I_L)$ or $\eta\sim\mathbb{C}\mathcal{N}(0,\sigma^2 I_L)$ ($\mathbb{C}\mathcal{N}$ stands for the circularly-symmetric complex normal distribution, $I_L$ is an $L\times L$ identity matrix), and $R_s\left\{\cdot\right\}$ is a discrete cyclic shift by $s$, i.e.
\begin{equation} \label{eq:cyclic shift def}
R_s\left\{x\right\} [\ell] = x\left[\operatorname{mod}{(\ell-s,L)}\right],
\end{equation} 
with $s$ drawn from some unknown probability distribution over $\mathbb{Z}_L$. In what follows, we consider all vectors as cyclic, and drop the modulus by $L$ from our notation in all index assignments. 
The underlying signal $x$ is assumed to follow a standard zero-mean factor model 
\begin{equation}
x = \sum_{i=1}^r a_i v_i,	\label{eq:x model}
\end{equation}
where $\{ v_i \}_{i=1}^r\in \mathbb{C}^L$ are orthonormal, and $\{a_i\}_{i=1}^r$ are i.i.d with either $a_i \sim \mathcal{N}(0,\lambda_i)$ (if $\eta\sim\mathcal{N}(0,\sigma^2 I_L)$) or $a_i \sim \mathbb{C}\mathcal{N}(0,\lambda_i)$ (if $\eta\sim\mathbb{C}\mathcal{N}(0,\sigma^2 I_L)$). We mention that while the theoretical analysis in this work focuses on the complex-valued case (where $a_i \sim \mathbb{C}\mathcal{N}(0,\lambda_i)$ and $\eta\sim \mathbb{C}\mathcal{N}(0,\sigma^2 I_L)$), for practical purposes we also consider the real-valued case ($v_i\in \mathbb{R}^L$, $a_i \sim \mathcal{N}(0,\lambda_i)$, and $\eta \sim \mathcal{N}(0,\sigma^2 I_L)$), providing appropriate modifications to our algorithms (see Section~\ref{section:step 1}).
Now, given~\eqref{eq:x model}, the covariance matrix of $x$ is 
\begin{equation}
\Sigma_x := \mathbb{E}\left[ x x^* \right] = \sum_{i=1}^r \lambda_i v_i v_i^*,
\end{equation}
where $(\cdot)^*$ stands for complex-conjugate and transpose. 
While the rank of $\Sigma_x$ is $r$ and can be considerably smaller than $L$, the covariance matrix of $y$, given by $\Sigma_y := \mathbb{E}\left[ y y^* \right]$, typically admits a rank much larger than $r$, even if no noise is present ($\sigma=0$). This is because the rank of $\Sigma_y$ is dominated by the dimension of the set of vectors $\left\{{R}_{s}\{v_i\}\right\}_{i,s}$ for $i=1,\ldots,r$ and $s \in S$ (where $S\subset\mathbb{Z}_L$ is a set of allowed shifts), which can exceed $r$ significantly. In particular, if the probability distribution of $s$ is non-vanishing (i.e. all shifts are allowed; $S = \mathbb{Z}_L$) and $v_i$ is aperiodic for some $i$ (see~\cite{abbe2017multireference}), then $\Sigma_y$ is full-rank. 

Considering the setting of~\eqref{eq:MRFA model def}--\eqref{eq:x model}, a fundamental problem of interest is the following one.
\begin{prob}[Multi-Reference Factor Analysis] \label{prob:MRFA}
Given $N$ i.i.d measurements $y_1,\ldots,y_N$ following the model~\eqref{eq:MRFA model def}--\eqref{eq:x model}, estimate $\lambda_1,\ldots,\lambda_r$ and $v_1,\ldots,v_r$.
\end{prob}
Problem~\ref{prob:MRFA}, termed \textit{Multi-Reference Factor Analysis} (MRFA), can be viewed as a generalization of standard factor analysis, to the setting of unknown translations of the underlying signal vectors. 
In this work, instead of estimating $\lambda_1,\ldots,\lambda_r$ and $v_1,\ldots,v_r$ directly, we consider the closely-related problem of estimating the covariance matrix $\Sigma_x$, whose eigenvalues and eigenvectors are $\{\lambda_i\}_{i=1}^r$ and $\{v_i\}_{i=1}^r$, respectively. 
We exemplify our setting in Figure~\ref{fig:setting example} for $r=3$, $L=50$, and $\sigma^2 = 0.01/L$. 

{
\begin{figure}[p]
  \centering
  	\subfloat[Eigenvectors of $\Sigma_x$: $v_1$, $v_2$, $v_3$] 
  	{
    \includegraphics[width=0.33\textwidth]{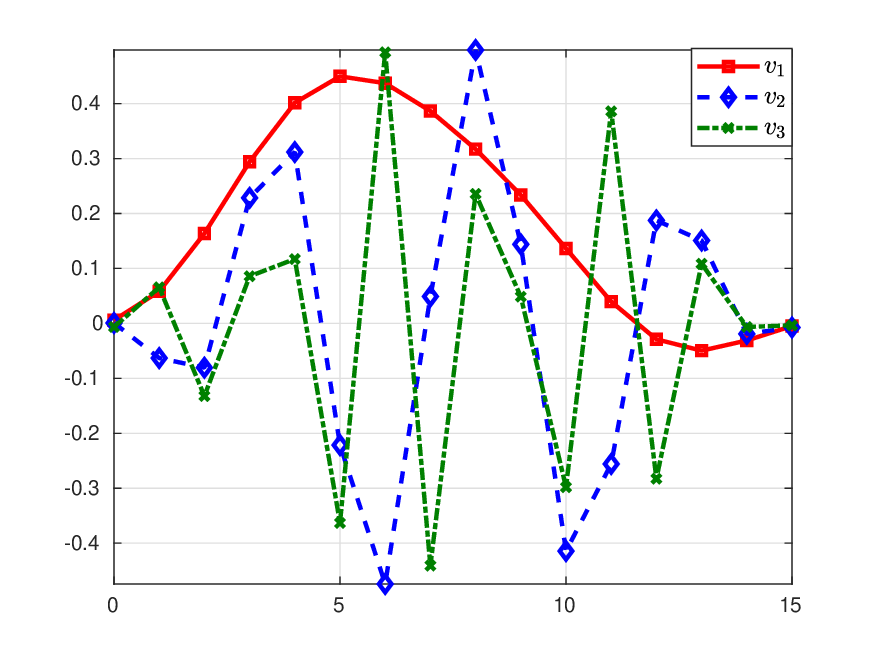} \label{fig:intro_example_eigenvecs}
    }
    \subfloat[Exemplar signal measurements]  
    { 
    \includegraphics[width=0.33\textwidth]{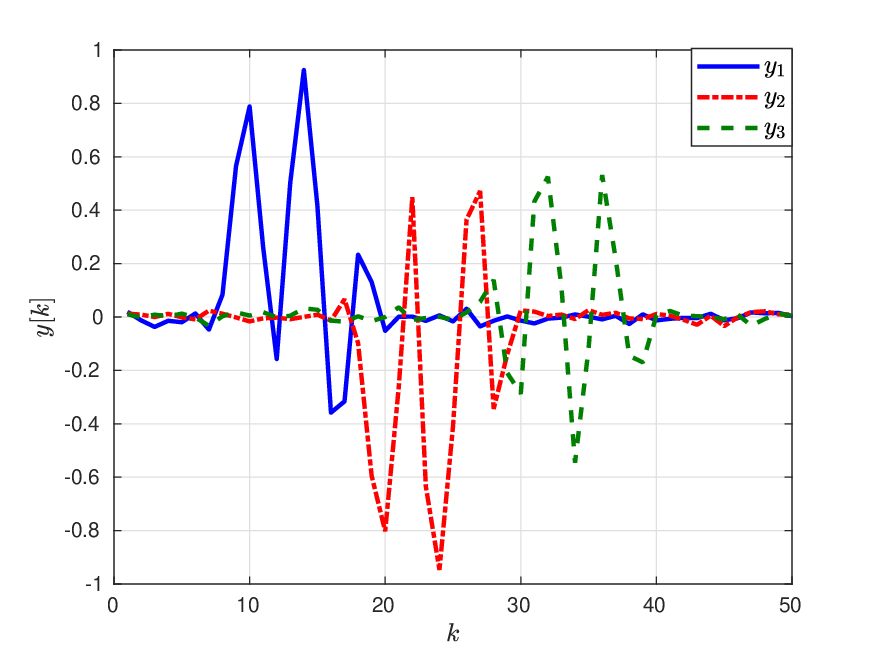} \label{fig:intro_example_signals}
    }
    \subfloat[Eigenvalues of $\Sigma_y$]
    {
    \includegraphics[width=0.33\textwidth]{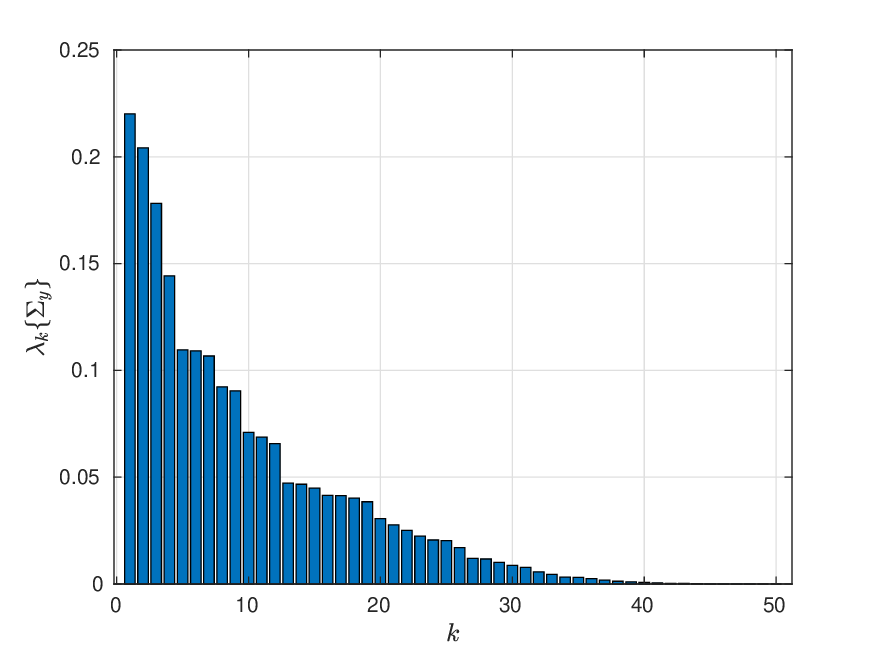} \label{fig:intro_example_eigenvals}
    }
	\caption[setting example] 
	{A prototypical example of our setting for $r=3$, $L=50$, and $\sigma^2=0.01/L$. We set the eigenvalues to $\lambda_1 = 1$, $\lambda_2 = 0.7$, $\lambda_3 = 0.3$, and the eigenvectors $v_1$, $v_2$, $v_3$ (leftmost figure) to the discrete local-cosine basis functions (see~\cite{aharoni1993local} equation (24)) of orders $0$, $3$, $6$, respectively, supported on $16$ samples. We fixed the distribution for the cyclic shift $s$ (see~\eqref{eq:cyclic shift def}) to be uniform over $\{0,\ldots,L-16\}$, and zero otherwise. Typical observations of $y$ can be seen in the central figure, and the eigenvalues of the resulting covariance matrix of $y$ are shown in the rightmost figure. It is evident that different observations of $y$ are dissimilar, precluding the possibility of a straightforward alignment between them. Furthermore, while the rank of $\Sigma_x$ is $3$, $\Sigma_y$ is full-rank and exhibits a slow decay of its eigenvalues.}  \label{fig:setting example}
\end{figure}

\begin{figure}
  \centering
    %\begin{subfigure}
  	\subfloat[Estimated eigenvalues $\widetilde{\lambda}_1,\ldots,\widetilde{\lambda}_L$] 
  	{
    \includegraphics[width=0.33\textwidth]{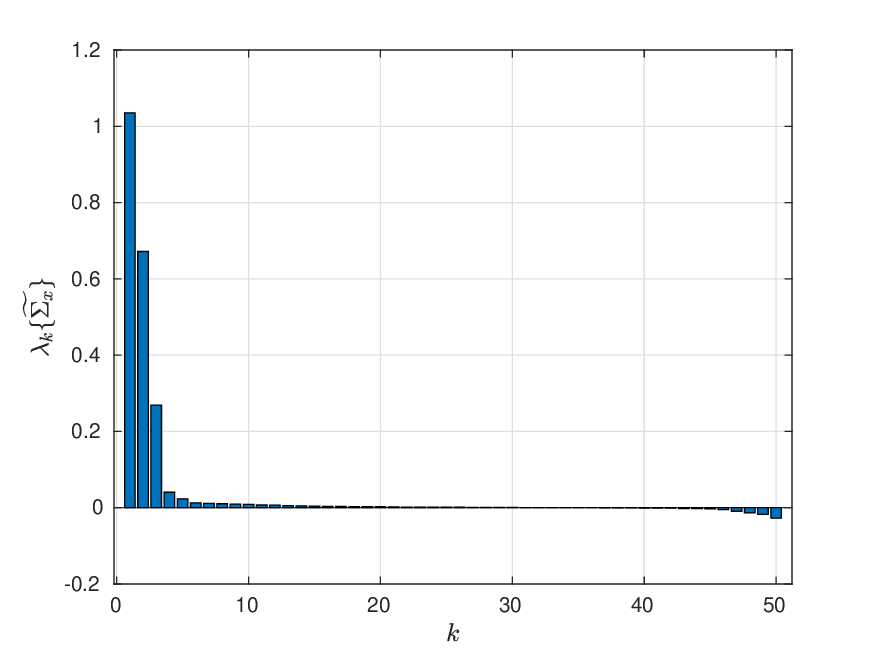} \label{fig:intro_example_eigenvecs}
    } 
    \subfloat[$v_1$ versus $\widetilde{v_1}$]  
    { 
    \includegraphics[width=0.33\textwidth]{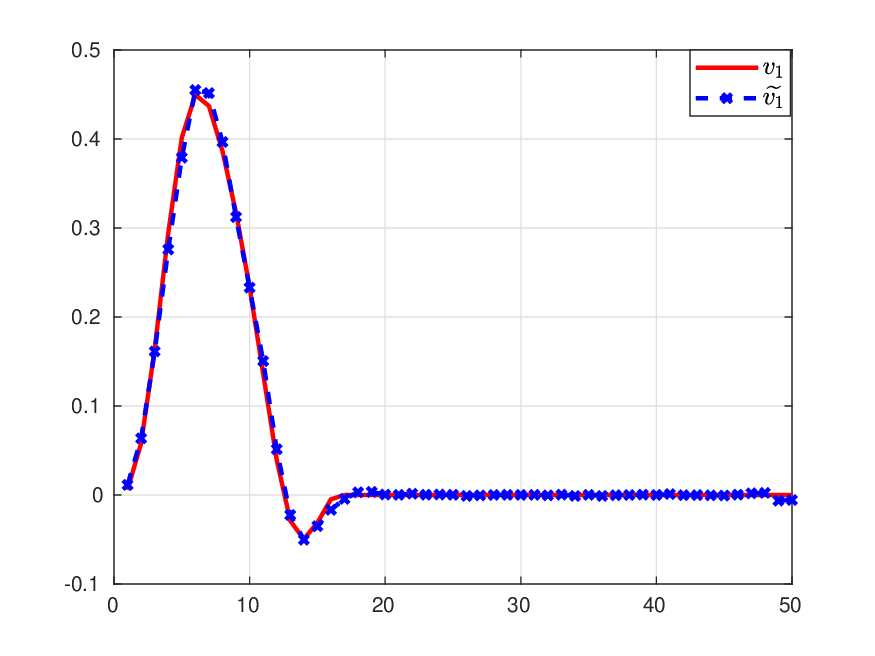} \label{fig:intro_example_signals}
    } \\
    %\end{subfigure}
    %\begin{subfigure}
    \subfloat[$v_2$ versus $\widetilde{v_2}$]
    {
    \includegraphics[width=0.33\textwidth]{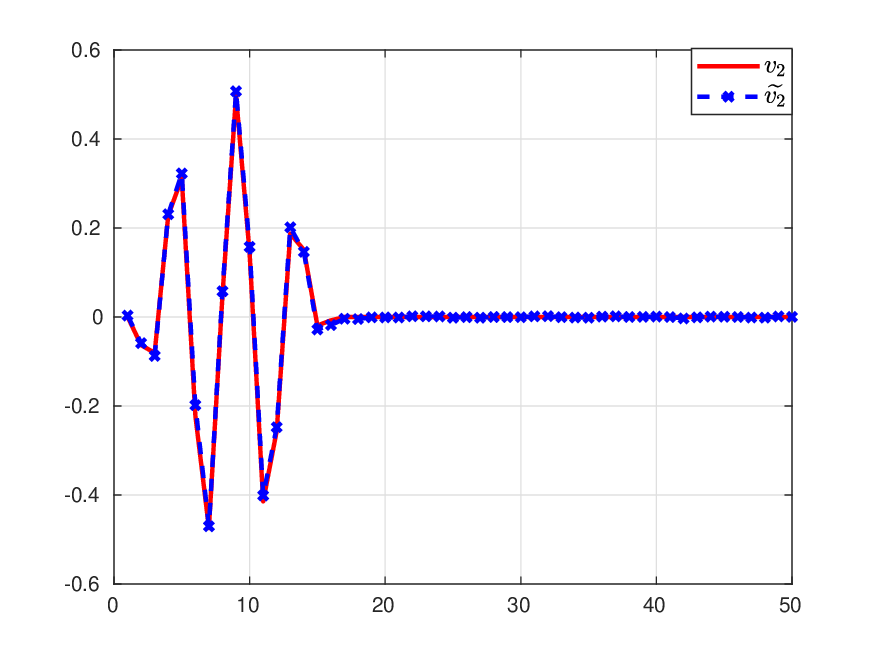} \label{fig:intro_example_eigenvals}
    }
    \subfloat[$v_3$ versus $\widetilde{v_3}$]
    {
    \includegraphics[width=0.33\textwidth]{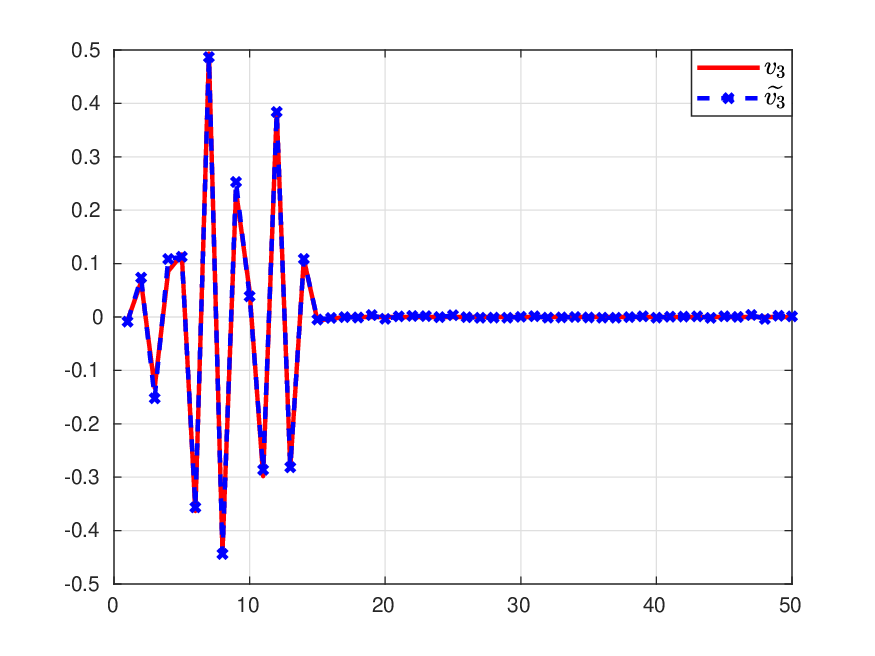} \label{fig:intro_example_eigenvals}
    }
    %\end{subfigure}
	\caption[setting example estimate] 
	{Results of applying our algorithms (Algorithm~\ref{alg:step 1 alg} followed by Algorithm~\ref{alg:step 2 direct}) to $N=1000$ measurements of $y$, with the setting of Figure~\ref{fig:setting example}. The figure in the top-left depicts the eigenvalues obtained from the estimated covariance matrix $\widetilde{\Sigma}_x$ (an estimate of $\Sigma_x$ computed by Algorithm~\ref{alg:step 2 direct}, see Section~\ref{section: step 2}), and the figures in the top-right, bottom-left, and bottom-right, compare between the first three eigenvectors of $\widetilde{\Sigma}_x$ to $v_1$, $v_2$, $v_3$, respectively. It is evident that the first three eigenvalues of $\widetilde{\Sigma}_x$ are indeed close to $\lambda_1=1$, $\lambda_2=0.7$, $\lambda_3=0.3$, respectively, while the corresponding eigenvectors of $\widetilde{\Sigma}_x$ match $v_1$, $v_2$, and $v_3$, almost exactly. }   \label{fig:setting example estimate}
\end{figure}
}

\subsection{Related work}
As far as we know, the MRFA problem (Problem~\ref{prob:MRFA}) as presented here has not been treated in the literature. However, it is worthwhile to review some closely related problems, and the approaches undertaken to solve them. When the signal $x$ in~\eqref{eq:MRFA model def} is deterministic and fixed, our setting becomes that of Multi-Reference Alignment (MRA)~\cite{abbe2017sample, bandeira2014multireference, bendory2017bispectrum, perry2017sample,chen2018spectral,bandeira2017optimal}, where an unknown vector is to be recovered from its translated and noisy observations. Since $x$ is fixed in MRA, its translated copies can be accurately aligned when the Signal-to-Noise Ratio (SNR) is sufficiently high. Therefore, the line of work in MRA mostly focuses on the low-SNR regime, where alignment is impossible.
Signal estimation under group actions other than translations has recently been considered in~\cite{bandeira2017estimation,abbe2018estimation}. 

In the majority of the above-mentioned works on MRA, signal estimation is carried out through an instance of the Method of Moments~\cite{van2000asymptotic} (see also the generalized method of moments~\cite{hansen1982large}). Specifically, certain shift-invariant moments of the underlying signal are identified and estimated, and then employed for recovering the underlying signal from the emerging moment equations. Shift-invariant moments which are extensively used in this context are the signal's mean, the signal's power spectrum, and the signal's bispectrum~\cite{collis1998higher}. The power spectrum and the bispectrum of a signal essentially correspond to certain double and triple correlations, respectively, in the Fourier domain of the signal, which are invariant to cyclic shifts of the signal. It is important to point out that the power spectrum and the bispectrum are omnipresent in classical statistical signal processing, see for example~\cite{nikias1987bispectrum}, with an endless list of applications. 
In the context of MRA, knowing the bispectrum is sufficient to recover the signal up to fundamental ambiguities (arbitrary cyclic shifts of the underlying signal)~\cite{bendory2017bispectrum}. The algorithmic use of the bispectrum (often coupled with other shift-invariant moments for improved performance) for estimating the signal in MRA was investigated in~\cite{bendory2017bispectrum,perry2017sample}, where different algorithms were presented for this task. Particularly, much focus was put on the required sample complexity, namely the number of samples $N$ required to achieve a prescribed estimation error for a given SNR value. 
We also mention~\cite{abbe2017multireference}, where it was shown that the second moment of the measurements $y$ in MRA is sufficient to recover the signal $x$ if the distribution of the shifts is aperiodic, resulting in an improved sample complexity rate. 
Aside from approaches leveraging shift-invariant moments, another widespread approach for solving estimation problems akin to MRA (or more generally - estimation problems involving nuisance parameters) is Expectation Maximization (EM)~\cite{dempster1977maximum}. While EM is popular and intuitive, its crucial drawback is lack of theoretical convergence guarantees. On the other hand, methods based on shift-invariant moments are amenable to rigorous analysis, allowing for algorithms with provable theoretical guarantees. Furthermore, methods based on invariant moments lead to single-pass algorithms, in the sense that every measurement $y_i$ is only considered once, with subsequent processing involving only the estimated moments.

We also mention that several recent works extend the standard model of MRA (where $x$ is fixed) to a scenario where $x$ assumes discrete heterogeneity~\cite{boumal2017heterogeneous,ma2018heterogeneous,perry2017sample}, namely that $x$ is chosen from a finite set of templates $x_1,\ldots,x_k$.

Another closely related line of work is that of Generalized Principal Components Analysis (GPCA) (also known as \textit{subspace clustering})~\cite{vidal2005generalized,vidal2011subspace,liu2010robust}, where data points are assumed to reside on a union of low-dimensional linear subspaces. The MRFA problem could be considered as a special case of GPCA, where each subspace is described by a different translation of the subspace spanned by $\{v_i\}_{i=1}^r$. However, in GPCA no relation between the different subspaces is assumed, and consequently, the theoretical recovery guarantees for subspace recovery are typically prohibitive if the noise variance and the number of subspaces are large.

Last, we mention~\cite{aizenbud2019rank}, where the authors considered the problem of MRFA in the restricted case of $r=1$. A key observation in~\cite{aizenbud2019rank} is that if the distribution of the shifts is uniform, then the bispectrum vanishes entirely. Therefore, it was proposed to employ the fourth-order shift-invariant moment known as the trispectrum~\cite{collis1998higher} to estimate the signal parameters ($\lambda_1$ and $v_1$), and algorithms were presented for consistently estimating $\lambda_1$ and $v_1$ as $N\rightarrow\infty$. The trispectrum is analogous to the power spectrum and the bispectrum, in the sense that it consists of certain quadruple correlations in the Fourier domain of the signal, which are invariant to cyclic shifts. As in MRA, much focus was put in~\cite{aizenbud2019rank} on the sample complexity of the algorithms in the low-SNR regime, since in the high-SNR regime the observations can be accurately aligned using correlations (a fact which is only true for the case $r=1$).

\subsection{Our contributions and main results}
We consider the problem of estimating the covariance matrix $\Sigma_x$ from the measurements $y_1,\ldots,y_N$ (generated according to the model~\eqref{eq:MRFA model def}--\eqref{eq:x model}) using shift-invariant moments, where we assume no explicit knowledge of the rank $r$. We investigate certain theoretical aspects of uniqueness and identifiability, derive practical algorithms for consistently estimating $\Sigma_x$ from $y_1,\ldots,y_N$, and conclude with extensive simulations. 

We next describe our contributions in detail. Unless otherwise stated, we refer the complex-valued case of our setting, where $a_i \sim \mathbb{C}\mathcal{N}(0,\lambda_i)$ and $\eta\sim \mathbb{C}\mathcal{N}(0,\sigma^2 I_L)$.

\subsubsection{Characterization of uniqueness and identifiability}
We begin by investigating the moments equations arising from the power spectrum and the trispectrum, and consider the question of whether these equations are sufficient to recover $\Sigma_x$, and under which conditions. The results of this investigation are reported in Section~\ref{section:invariant moments}, where we show the following. By posing an equivalent formulation to our problem in the Fourier domain, replacing $\Sigma_x$ with its analogue in the Fourier domain $\hat{\Sigma}_x$, we show that the algebraic structure of the moments equations (arising from the power spectrum and the trispectrum) determines $\hat{\Sigma}_x$ completely up to multiplication (element-wise) with an unknown circulant matrix of phases (namely a circulant matrix with unit-magnitude elements). Essentially, this multiplication with a circulant matrix of phases corresponds to phase uncertainties on the diagonals of $\hat{\Sigma}_x$. We then consider the problem of resolving these uncertainties, a problem which we term \textit{circulant phase retrieval} (see Problem~\ref{problem:Sigma_x diagonal phase retrieval problem}). We show that by leveraging the Hermitian and positive semidefinite (PSD) structure of $\hat{\Sigma}_x$, it is possible to solve the circulant phase retrieval problem and to recover $\hat{\Sigma}_x$ (up to certain fundamental ambiguities, see Proposition~\ref{prop:fundamental ambiguities}), whenever $r<\sqrt{L}$ and a certain technical condition holds (Condition~\ref{cond:low-rank recovery} in Section~\ref{section: step 2}). We note that this technical condition was observed to hold in all conducted numerical experiments, suggesting that it is non-restrictive in practice. While our main result asserts that recovery of a low-rank $\Sigma_x$ is possible from the power spectrum and the trispectrum alone (see Theorem~\ref{thm:low-rank recovery}), we show that this not the case for a full-rank $\Sigma_x$ (see Proposition~\ref{prop:P_y and T_y ambiguity full rank}).

\subsubsection{Practical estimation procedures with theoretical guarantees}
In Section~\ref{section:recovering the covariance}, we describe a statistically-consistent procedure for estimating $\Sigma_x$ from finite-sample estimates of the power spectrum and the trispectrum. This is essentially a two step procedure, where the first step is to estimate $\hat{\Sigma}_x$ up to the aforementioned ``diagonal phase ambiguities'', and the second step is to resolve them while exploiting the fact that $\hat{\Sigma}_x$ is Hermitian and PSD. 
The first step is derived in Section~\ref{section:step 1}, see Algorithm~\ref{alg:step 1 alg}, and consists of solving a convex optimization problem, followed by computing several rank-one decompositions. We prove that regardless of the rank $r$, this procedure consistently estimates $\hat{\Sigma}_x$ as $N\rightarrow\infty$ up to an element-wise multiplication with a circulant phase matrix, see Theorem~\ref{thm:consistency of step 1}. We also provide an appropriate modification of the above-mentioned procedure to handle the real-valued case ($v_i\in \mathbb{R}^L$, $a_i \sim \mathcal{N}(0,\lambda_i)$, and $\eta \sim \mathcal{N}(0,\sigma^2 I_L)$), where the difference lies only in the objective function of the convex optimization problem. We remark that only the first step of our two-step procedure needs to be modified to handle the real-valued case. The second step of the recovery process, i.e. resolving the diagonal ambiguities, is derived in Section~\ref{section: step 2}. Our main contribution in this context, is a polynomial-time procedure for solving the circulant phase retrieval problem (Problem~\ref{problem:Sigma_x diagonal phase retrieval problem} in Section~\ref{section:invariant moments}) when $r<\sqrt{L}$ and Condition~\ref{cond:low-rank recovery} holds, see Algorithm~\ref{alg:step 2 direct} in Section~\ref{section: step 2}. Fundamentally, this procedure begins by constructing a certain matrix from the output of step one, and proceeds with evaluating its right singular vector corresponding to its smallest singular value. When combined with Algorithm~\ref{alg:step 1 alg}, Algorithm~\ref{alg:step 2 direct} allows for a consistent estimate of $\Sigma_x$ as $N\rightarrow\infty$, see Theorem~\ref{thm:consistency of step 2, direct approach}. In Figure~\ref{fig:setting example estimate} we demonstrate the results of applying Algorithms~\ref{alg:step 1 alg} and~\ref{alg:step 2 direct} to estimate $\{\lambda_i\}$ and $\{v_i\}$ of Figure~\ref{fig:setting example}, for $N=1000$. 
Last, in Section~\ref{section:numerical examples} we conduct extensive numerical experiments, corroborating the statistical consistency of our estimators and moreover, demonstrating their favorable properties, such as a $1/N$ squared-error convergence rate and robustness to high levels of noise. 

\section{Invariant moments and identifiability of $\Sigma_x$} \label{section:invariant moments}

We start by introducing the shift-invariant moments used to recover $\Sigma_x$, and provide certain necessary and sufficient conditions for recovery.
Instead of working with the model~\eqref{eq:MRFA model def} directly, we consider a more convenient and equivalent formulation in the Fourier domain, where cyclic shifts are replaced by modulations. Let $F\in\mathbb{C}^{L\times L}$ be the unitary Discrete Fourier Transform (DFT) matrix, and let $f_\ell\in\mathbb{C}^L$ be the $\ell$'th DFT vector, given by
\begin{equation}
F[\ell,k] = \frac{1}{\sqrt{L}}f_\ell[k], \qquad f_\ell[k] = e^{-\imath 2\pi k \ell/L}, \qquad \ell,k = 0,\ldots,L-1. \label{eq:DFT matrix and vectors def}
\end{equation}
We denote the Fourier transforms of the quantities in~\eqref{eq:MRFA model def} and~\eqref{eq:x model} by
\begin{align}
\begin{aligned}
\hat{y} &= F y, \qquad \hat{x} = F x, \qquad \hat{\eta} = F \eta, \\
\hat{v}_i &= F v_i, \qquad i=1,\ldots,r.
\end{aligned}
\label{eq:Fourier domain quantities def}
\end{align}
Then, a formulation equivalent to~\eqref{eq:MRFA model def}--\eqref{eq:x model} in the Fourier domain is
\begin{align}
\begin{aligned}
\hat{y}[k] &= f_{s}[{k}] \hat{x}[k] + \hat{\eta}[k], \\ \hat{x}[k] &= \sum_{i=1}^r a_i \hat{v}_i[k],
\end{aligned} \label{eq:MRFA_model_def_Fourier}
\end{align}
for $k=0,\ldots,L-1$, where $\hat{\eta}\sim \mathbb{C}\mathcal{N}(0,\sigma^2 I_L)$, $\{ \hat{v}_i \}_{i=1}^r$ are orthonormal (since $F$ is unitary), and $s$ is the parameter of the cyclic shift from~\eqref{eq:MRFA model def}. Correspondingly, the covariance matrix of $\hat{x}$ is given by
\begin{equation}
\hat{\Sigma}_{{x}} := \mathbb{E}\left[ \hat{x} \hat{x}^*\right] = F \Sigma_{{x}} F^* = \sum_{i=1}^r \lambda_i \hat{v}_i \hat{v}_i^*, \label{eq:Sigma_x_hat def}
\end{equation}
which is Hermitian and positive semidefinite (PSD), with rank $r$ and with eigenvalues and eigenvectors $\{\lambda_i\}_{i=1}^r$ and $\{\hat{v}_i\}_{i=1}^r$, respectively. Clearly, knowing $\hat{\Sigma}_x$ is equivalent to knowing $\Sigma_x$, and so from this point onward we focus on the recovery of $\hat{\Sigma}_x$ instead of $\Sigma_x$.
Throughout this section, we assume the noiseless case, i.e. $\sigma = 0$, as the existence of noise simply adds a known bias term to $\Sigma_x$ which is easily removed (see Section~\ref{section:step 1} and Appendix~\ref{appendix:Justification of expression for P_y and T_y with diagonals}), and has no influence on the issue of identifiability of the solution.

Let us consider the second and fourth moments of $\hat{y}$, denoted $M^{(2)}_{\hat{y}}\in\mathbb{C}^{L\times L}$ and $M^{(4)}_{\hat{y}}\in\mathbb{C}^{L\times L\times L\times L}$ respectively, and given by
\begin{equation}
\begin{aligned}
M^{(2)}_{\hat{y}}[k_1,k_2] &= \mathbb{E} \left[ \hat{y}[k_1] \overline{\hat{y}[k_2]} \right], \\
M^{(4)}_{\hat{y}}[k_1,k_2,k_3,k_4] &= \mathbb{E} \left[ \hat{y}[k_1] \overline{\hat{y}[k_2]} \hat{y}[k_3] \overline{\hat{y}[k_4]} \right], 
\end{aligned} \label{eq:M_2 and M_4 def}
\end{equation}
for $k_1,k_2,k_3,k_3 \in \{0,\ldots,L-1\}$, where $\overline{(\cdot)}$ denotes complex-conjugation.
It is important to mention that all odd-ordered moments of $\hat{y}$ vanish, since the $a_i$'s of~\eqref{eq:x model} admit a zero-centered symmetric distribution. This explains why we only consider the second and fourth moments of $\hat{y}$, and not the first and third.
Next, we define the following subsets of $M^{(2)}_{\hat{y}}$ and $M^{(4)}_{\hat{y}}$:
\begin{align}
P_y[k] &:= M^{(2)}_{\hat{y}}[k,k], \label{eq:p_y def}\\
T_y[k_1,k_2,k_3] &:= M^{(4)}_{\hat{y}}[k_1,k_2,k_3,k_1-k_2+k_3]. \label{eq:T_y def}
\end{align}
$P_y$ is known as the \textit{power spectrum} of $y$, and $T_y$ is known as the \textit{trispectrum} of $y$~\cite{collis1998higher}. A fundamental property of $P_y$ and $T_y$ is that their entries are invariant to cyclic shifts of $y$ (or equivalently, to integer modulations of $\hat{y}$), regardless of the distribution of the cyclic shifts (this can be easily verified by substituting $\hat{y} = f_s[k] \hat{x}[k]$ into~\eqref{eq:p_y def} and~\eqref{eq:T_y def}, where $f_s[k]=e^{-\imath 2\pi ks/L}$ from~\eqref{eq:DFT matrix and vectors def}). Moreover, if $y$ admits uniformly distributed cyclic shifts, then most of the entries in $M^{(2)}_{\hat{y}}$ and $M^{(4)}_{\hat{y}}$ vanish, and the only non-zero entries of $M^{(2)}_{\hat{y}}$ and $M^{(4)}_{\hat{y}}$ are given by $P_y$ and $T_y$, respectively.

Since $P_y$ and $T_y$ are invariant to cyclic shifts in $y$, and as ${y} = R_s\{x\}$ (see~\eqref{eq:MRFA model def} with $\sigma=0$), $P_y$ and $T_y$ can be viewed as computed directly from $M^{(2)}_{\hat{x}}$ and $M^{(4)}_{\hat{x}}$ (defined by replacing $\hat{y}$ with $\hat{x}$ in~\eqref{eq:M_2 and M_4 def}) instead of $M^{(2)}_{\hat{y}}$ and $M^{(4)}_{\hat{y}}$, respectively. Furthermore, as $a_i$ is normally-distributed, all moments of $\hat{x}$ can be described in terms of its first and second moments, that is, its mean and covariance. Since the mean of $\hat{x}$ is zero, $P_y$ and $T_y$ can be described solely in terms of $\hat{\Sigma}_x$. In particular, we have the following proposition providing the explicit forms of ${P}_y$ and ${T}_y$.
\begin{prop}[Explicit form of ${P}_y$ and ${T}_y$] \label{prop:f_2,f_4,f_6 explicit form}
Consider the noiseless case (i.e. $\sigma = 0$). Then,
\begin{align}
{P}_y[k_1] &= \hat{\Sigma}_x[k_1,k_1], \label{eq: P_y expression Sigma} \\
{T}_y[k_1,k_2,k_3] &= \hat{\Sigma}_x[k_1,k_2]\cdot \overline{\hat{\Sigma}_x[k_3-k_2+k_1,k_3]} + \hat{\Sigma}_x[k_1,k_3-k_2+k_1] \cdot \overline{\hat{\Sigma}_x[k_2,k_3]}, \label{eq: T_y expression Sigma}
\end{align}
for all $k_1,k_2,k_3 \in \{0,\ldots,L-1\}$.
\end{prop}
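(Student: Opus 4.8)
The plan is to establish both identities in two stages. First I would reduce $P_y$ and $T_y$ from moments of the observed $\hat{y}$ to the corresponding moments of the clean Fourier-domain signal $\hat{x}$, exploiting the shift invariance of these quantities. Then I would evaluate those latter moments directly, using the fact that $\hat{x}$ is a zero-mean circularly-symmetric complex Gaussian vector whose covariance is exactly $\hat{\Sigma}_x$.

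For the reduction step, I would substitute the noiseless relation $\hat{y}[k] = f_s[k]\hat{x}[k]$ from~\eqref{eq:MRFA_model_def_Fourier} (with $\sigma=0$) into the definitions~\eqref{eq:p_y def} and~\eqref{eq:T_y def}. Since $f_s[k] = e^{-\imath 2\pi k s/L}$ has unit modulus, the power-spectrum entry collapses immediately to $\mathbb{E}\left[|\hat{x}[k_1]|^2\right]$, independently of $s$, giving $P_y[k_1] = \hat{\Sigma}_x[k_1,k_1]$. For the trispectrum, the four modulation factors combine into the single phase $e^{-\imath 2\pi (k_1-k_2+k_3-k_4)s/L}$, and the defining constraint $k_4 = k_1-k_2+k_3$ forces the exponent to vanish for \emph{every} value of $s$. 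Hence the expectation over the unknown shift distribution factors out trivially, leaving $T_y[k_1,k_2,k_3] = M^{(4)}_{\hat{x}}[k_1,k_2,k_3,k_1-k_2+k_3]$. This is precisely the shift-invariance reduction described in the text preceding the proposition.

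For the evaluation step, since the $a_i \sim \mathbb{C}\mathcal{N}(0,\lambda_i)$ are i.i.d., the vector $\hat{x} = \sum_i a_i \hat{v}_i$ is a zero-mean circularly-symmetric complex Gaussian with $\mathbb{E}\left[\hat{x}[a]\overline{\hat{x}[b]}\right] = \hat{\Sigma}_x[a,b]$ and, crucially, $\mathbb{E}\left[\hat{x}[a]\hat{x}[b]\right] = 0$ (the latter inherited from $\mathbb{E}[a_i a_j]=0$ for circularly-symmetric complex normals). I would then apply the complex Isserlis/Wick theorem, summing over pairings that match each unconjugated factor with a conjugated one; the degenerate pairings drop out by circular symmetry, leaving exactly $\hat{\Sigma}_x[k_1,k_2]\hat{\Sigma}_x[k_3,k_4] + \hat{\Sigma}_x[k_1,k_4]\hat{\Sigma}_x[k_3,k_2]$ with $k_4 = k_1-k_2+k_3$. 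A final application of the Hermitian identity $\hat{\Sigma}_x[a,b] = \overline{\hat{\Sigma}_x[b,a]}$ rewrites the second factor of each term as a conjugate, reproducing~\eqref{eq: T_y expression Sigma}.

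The step I expect to require the most care is the Gaussian moment factorization. The entire two-term structure of~\eqref{eq: T_y expression Sigma} rests on the vanishing of the pseudo-covariance $\mathbb{E}\left[\hat{x}[a]\hat{x}[b]\right]$, which is special to the circularly-symmetric complex case; in the real-valued model this term would survive and an additional cross-term would appear. Thus the main obstacle is to track carefully which of the admissible Wick pairings contribute, and to justify the disappearance of the anomalous pairing from the circular symmetry of the $a_i$ rather than merely quoting the theorem.
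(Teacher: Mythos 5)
Your proposal is correct and arrives at the same two-pairing formula, but it routes around a technical step that the paper handles explicitly. The paper's proof cites a matrix-form fourth-moment identity for $\hat{x}\sim\mathbb{C}\mathcal{N}(0,\hat{\Sigma}_x)$ (Theorem~5 of the cited reference on complex-normal moments, stated via the commutation matrix and $\hat{\Sigma}_x\otimes\hat{\Sigma}_x$), which requires $\hat{\Sigma}_x$ to be invertible; since that fails for $r<L$, the paper first proves the formula for $r=L$ and then extends it to $r<L$ by observing that $T_y$ is a polynomial in $\lambda_1,\ldots,\lambda_L$ and passing to the limit $\lambda_i\to 0$ for $i>r$. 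You instead apply the complex Isserlis/Wick theorem directly to $\hat{x}=\sum_i a_i\hat{v}_i$, killing the anomalous pairings via the vanishing pseudo-covariance $\mathbb{E}[\hat{x}[a]\hat{x}[b]]=0$; since Wick's theorem for circularly-symmetric complex Gaussians needs no density (it can be verified by expanding in the independent scalars $a_i$ and using $\mathbb{E}[a_i\overline{a_j}a_\ell\overline{a_m}]$), your argument covers degenerate $\hat{\Sigma}_x$ in one pass and the continuity step becomes unnecessary --- a genuine simplification, provided you state that you are using the moment (not density) form of the theorem. Your reduction of $T_y$ from $\hat{y}$ to $\hat{x}$ via cancellation of the phases $e^{-\imath 2\pi(k_1-k_2+k_3-k_4)s/L}$ at $k_4=k_1-k_2+k_3$ is exactly the shift-invariance observation the paper places in the text preceding the proposition and then uses silently in its appendix, so no gap there; and your closing remark about the surviving pseudo-covariance term in the real-valued case matches the paper's Appendix on the real-valued trispectrum.
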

The proof is provided in Appendix~\ref{Appendix: Proof of f_2,f_4 explicit form}. 

Assuming we have access to the shift-invariant moments $P_y$ and $T_y$, noting that they can be estimated in a straightforward manner from the observations of $y$ (see~\eqref{eq: P_y estimator} and~\eqref{eq: T_y estimator} in Section~\ref{section:step 1}), we turn to address the question of whether $\hat{\Sigma}_x$ can be identified, and under which conditions, from $P_y$ and $T_y$.
To that end, we consider the set of equations
\begin{align}
P_y = \mathcal{P}(X), \qquad
T_y = \mathcal{T}(X), \label{eq:P_y and T_y equations}
\end{align}
where $X\in\mathbb{C}^{L\times L}$ represents the unknown covariance matrix, and $\mathcal{P}:\mathbb{C}^{L\times L}\rightarrow \mathbb{C}^{L}$, $\mathcal{T}:\mathbb{C}^{L\times L}\rightarrow \mathbb{C}^{L\times L \times L}$ are maps encoding the relation between the underlying covariance $\hat{\Sigma}_x$ and the observed shift-invariant moments $P_y$ and $T_y$. Specifically, and in accordance with~\eqref{eq: P_y expression Sigma} and~\eqref{eq: T_y expression Sigma}, we define $\mathcal{P}$ and $\mathcal{T}$ as
\begin{align}
\mathcal{P}(X)[k_1] &:= X[k_1,k_1], \label{eq: P_mathcal expression} \\
\mathcal{T}(X)[k_1,k_2,k_3] &:= X[k_1,k_2]\cdot \overline{X[k_3-k_2+k_1,k_3]} + X[k_1,k_3-k_2+k_1] \cdot \overline{X[k_2,k_3]}, \label{eq: T_mathcal expression} 
\end{align}
for all $k_1,k_2,k_3 \in \{0,\ldots,L-1\}$. We mention that the domains of $\mathcal{P}$ and $\mathcal{T}$ are arbitrary $\mathbb{C}^{L\times L}$ matrices (instead of only Hermitian and PSD matrices) to simplify the analysis in this section. This simplification is achieved by first characterizing the solutions to~\eqref{eq:P_y and T_y equations} for arbitrary $\mathbb{C}^{L\times L}$ matrices, and then restricting our attention to the subset of solutions which are Hermitian and PSD. 

Given $P_y$ and $T_y$, \eqref{eq:P_y and T_y equations} corresponds to a set of non-linear equations which are to be solved to determine $\hat{\Sigma}_x$, where $\mathcal{P}(X)$ is a linear map in $X$, and $\mathcal{T}(X)$ is a quadratic map in $X$. According to~\eqref{eq: P_y expression Sigma}, $P_y$ is merely the main diagonal of $\hat{\Sigma}_x$, and hence insufficient for recovering $\hat{\Sigma}_x$. However, $T_y$ provides additional $L^3$ equations, which is more than the number of variables in a generic covariance matrix, and hence possibly enough for recovering $\hat{\Sigma}_x$. 
Let us denote by $\operatorname{Circulant}\{ z \}$ a circulant matrix constructed from a vector $z\in\mathbb{C}^L$, namely
\begin{equation}
\operatorname{Circulant}\{ z \} [k_1,k_2] = z[\operatorname{mod}(k_2-k_1,L)],
\end{equation}
for $k_1,k_2=0,\ldots,L-1$.
The following lemma characterizes the set of all $\mathbb{C}^{L\times L}$ matrices satisfying the equations~\eqref{eq:P_y and T_y equations}.
\begin{lem} [Solutions of the moments equations] \label{lem:P_y and T_y consequence}
Suppose that $\hat{\Sigma}_x[i,j] \neq 0$ for all $i,j$. Then, a matrix $X\in \mathbb{C}^{L\times L}$ satisfies the set of equations~\eqref{eq:P_y and T_y equations} if and only if 
\begin{equation}
X = \hat{\Sigma}_x \odot \operatorname{Circulant} \left\{[1, e^{\imath \varphi_1} ,\ldots, e^{\imath \varphi_{L-1}} ] \right\}, \label{eq:P_y and T_y circulant phase ambiguities}
\end{equation}
where $\varphi_1,\ldots,\varphi_{L-1}\in [0,2\pi)$ but are otherwise arbitrary, and $\odot$ is the Hadamard product (entry-wise multiplication).
\end{lem}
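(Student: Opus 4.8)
The plan is to prove both implications, with the reverse (``only if'') direction carrying essentially all of the work.

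\textbf{The ``if'' direction.} I would substitute $X = \hat{\Sigma}_x \odot \operatorname{Circulant}\{[1,e^{\imath\varphi_1},\ldots,e^{\imath\varphi_{L-1}}]\}$, i.e. $X[k_1,k_2] = \hat{\Sigma}_x[k_1,k_2]\,e^{\imath\varphi_{k_2-k_1}}$ with $\varphi_0=0$, into \eqref{eq: P_mathcal expression}--\eqref{eq: T_mathcal expression} and check that the attached phases cancel. For $\mathcal{P}$ this is immediate, since the main diagonal carries phase $\varphi_0=0$. For $\mathcal{T}$ the key observation is that in each of the two products of \eqref{eq: T_mathcal expression} the two factors lie on the \emph{same} diagonal: $X[k_1,k_2]$ and $X[k_3-k_2+k_1,k_3]$ both lie on diagonal $k_2-k_1$, while $X[k_1,k_3-k_2+k_1]$ and $X[k_2,k_3]$ both lie on diagonal $k_3-k_2$. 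Since exactly one factor in each product is conjugated, the phases cancel termwise, giving $\mathcal{T}(X)=\mathcal{T}(\hat{\Sigma}_x)=T_y$ by Proposition~\ref{prop:f_2,f_4,f_6 explicit form}.

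\textbf{The ``only if'' direction: reducing to phases.} Suppose $X$ satisfies \eqref{eq:P_y and T_y equations}. First, $\mathcal{P}(X)=P_y=\mathcal{P}(\hat{\Sigma}_x)$ forces $X[k,k]=\hat{\Sigma}_x[k,k]$ for all $k$ (these values being real and positive since $\hat{\Sigma}_x$ is PSD with nonvanishing entries), so in particular the main-diagonal phase is trivial. Next I evaluate the trispectrum equation at $k_3=k_2$, where \eqref{eq: T_mathcal expression} collapses to $\mathcal{T}(X)[k_1,k_2,k_2]=|X[k_1,k_2]|^2+X[k_1,k_1]\overline{X[k_2,k_2]}$; comparing with the same expression for $\hat{\Sigma}_x$ and cancelling the common term $X[k_1,k_1]\overline{X[k_2,k_2]}=\hat{\Sigma}_x[k_1,k_1]\overline{\hat{\Sigma}_x[k_2,k_2]}$ yields $|X[k_1,k_2]|=|\hat{\Sigma}_x[k_1,k_2]|$ for all $k_1,k_2$. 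Every entry of $X$ is therefore nonzero, so I may write $X[k_1,k_2]=\hat{\Sigma}_x[k_1,k_2]\,e^{\imath\theta[k_1,k_2]}$, and the task reduces to showing that $\theta[k_1,k_2]$ depends only on $k_2-k_1 \pmod{L}$.

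\textbf{The core argument and the main obstacle.} Writing $\psi_d[p]:=\theta[p,p+d]$ for the phase along diagonal $d$ (so $\psi_0\equiv 0$) and substituting into $\mathcal{T}(X)=\mathcal{T}(\hat{\Sigma}_x)$, each instance of the trispectrum equation becomes $z_1 e^{\imath\alpha}+z_2 e^{\imath\beta}=z_1+z_2$, where $z_1,z_2$ are nonzero products of entries of $\hat{\Sigma}_x$ and $\alpha,\beta$ are differences of $\psi_d$'s along two diagonals. The main obstacle is that a single such equation is underdetermined — two real constraints for two phase unknowns, with a spurious ``reflected'' second solution — so $\alpha=\beta=0$ cannot be concluded pointwise. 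I would resolve this by choosing indices that decouple one term. Taking the two driving diagonals equal makes the two products coincide, and after dividing by the (nonzero) coefficient and using the magnitude equality one gets $\psi_d[p]=\psi_d[p+d]$ for every $d,p$; for $d=1$ this already says $\psi_1$ is constant. I then bootstrap: for an arbitrary diagonal $d$, evaluating the equation with the second diagonal set to $1$ makes the factor $\psi_1[p]-\psi_1[p+d]$ vanish, so that term cancels against the right-hand side, and nonvanishing of the remaining coefficient $\hat{\Sigma}_x[p,p+d]\overline{\hat{\Sigma}_x[p+1,p+1+d]}$ forces $\psi_d[p]=\psi_d[p+1]$ for all $p$. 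Hence every $\psi_d$ is constant, equal to some $\varphi_d$ with $\varphi_0=0$, which is precisely \eqref{eq:P_y and T_y circulant phase ambiguities}.
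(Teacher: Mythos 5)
Your proposal is correct and follows essentially the same route as the paper's proof in Appendix~B: extract the magnitude equality $|X[k_1,k_2]|=|\hat{\Sigma}_x[k_1,k_2]|$ from the degenerate slice of the trispectrum, pin down the first diagonal's phase as constant using the slice where the two diagonals coincide, and then bootstrap to every diagonal $d$ via the slice whose second diagonal is $1$, where the nonvanishing of $\hat{\Sigma}_x$'s entries lets you cancel and conclude $\psi_d[p]=\psi_d[p+1]$. The only differences are presentational (the paper phrases the constancy of each diagonal's phase as an explicit iterative determination starting from $D_m[0]$, and does not spell out the underdeterminacy obstacle you flag), so no further comment is needed.
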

The proof of Lemma~\ref{lem:P_y and T_y consequence} is provided in Appendix~\ref{Appendix: Proof of Lemma P_y and T_y consequence}. Essentially, Lemma~\ref{lem:P_y and T_y consequence} asserts that a solution to~\eqref{eq:P_y and T_y equations} is equal to the true covariance $\hat{\Sigma}_x$ up to a circulant matrix of unknown phases. That is, each diagonal of $\hat{\Sigma}_x$ with circulant wrapping (i.e. the entries $\{\hat{\Sigma}_x[\ell,\operatorname{mod}(\ell+k,L)]\}_{k=0}^{L-1}$ for the $\ell$'th diagonal) in~\eqref{eq:P_y and T_y circulant phase ambiguities} is multiplied by an unknown phase factor of the form $e^{\imath \varphi}$. 
In accordance with Lemma~\ref{lem:P_y and T_y consequence}, in Section~\ref{section:step 1} we describe a procedure, based on convex optimization followed by a rank-one decomposition, which solves~\eqref{eq:P_y and T_y equations} and outputs a statistically consistent estimate (as $N\rightarrow \infty$) for a matrix $\hat{\Sigma}_x \odot \operatorname{Circulant} \left\{[1, e^{\imath \varphi_1} ,\ldots, e^{\imath \varphi_{L-1}} ] \right\}$ with unknown angles $\varphi_1,\ldots,\varphi_{L-1}$. See Algorithm~\ref{alg:step 1 alg} for a summary of the procedure, and Theorem~\ref{thm:consistency of step 1} for its consistency guarantee.

At this point, it is important to note that the problem of recovering $\hat{\Sigma}_x$ under the model~\eqref{eq:MRFA_model_def_Fourier} admits an inherent ambiguity. Clearly, cyclically shifting the signal $x$ results in a covariance matrix $\Sigma_x$ whose rows and columns are cyclically shifted, while $P_y$ and $T_y$ remain unchanged. In the Fourier domain, where $x$ is replaced with $\hat{x}$, this ambiguity corresponds to integer modulations of the rows and columns of $\hat{\Sigma}_x$. In particular, let $\Omega(\hat{\Sigma}_x)$ be a set of $L$ matrices given by
\begin{equation}
\Omega(\hat{\Sigma}_x) = \left\{ \hat{\Sigma}_x \; , \; \operatorname{diag}(f_1) \cdot \hat{\Sigma}_x \cdot \operatorname{diag}(f_1^*) \; , \; \ldots \; , \; \operatorname{diag}(f_{L-1}) \cdot \hat{\Sigma}_x \cdot  \operatorname{diag}(f_{L-1}^*) \right\}, \label{eq:Omega def}
\end{equation}
where $f_\ell$ is the $\ell$'th DFT vector defined in~\eqref{eq:DFT matrix and vectors def}.
The set $\Omega(\hat{\Sigma}_x)$ is a set of ambiguities associated with the MRFA problem, since each $X\in\Omega(\hat{\Sigma}_x)$ is the covariance matrix of $F \cdot (R_s\{x\})$ (i.e. the Fourier transform of the signal $x$ cyclically shifted by $s$) for some cyclic shift $s$ (see~\eqref{eq:MRFA model def}--\eqref{eq:x model} and~\eqref{eq:DFT matrix and vectors def}--\eqref{eq:MRFA_model_def_Fourier}).
Consequently, $\Omega(\hat{\Sigma}_x)$ is a set of ambiguities inherent in the recovery of $\hat{\Sigma}_x$ from the shift-invariant moments $P_y$ and $T_y$, as established by the following proposition. 
\begin{prop}[Fundamental ambiguities] \label{prop:fundamental ambiguities}
Every matrix $X\in\Omega(\hat{\Sigma}_x)$ is Hermitian, PSD, has rank~$r$, and satisfies the equations in~\eqref{eq:P_y and T_y equations}.
\end{prop}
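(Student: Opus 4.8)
The plan is to fix an arbitrary element of $\Omega(\hat{\Sigma}_x)$, write it as $X = D_\ell\, \hat{\Sigma}_x\, D_\ell^*$ with $D_\ell := \operatorname{diag}(f_\ell)$, and verify the four claimed properties by exploiting that $D_\ell$ is unitary. Indeed, since $|f_\ell[k]| = |e^{-\imath 2\pi k\ell/L}| = 1$ for every $k$, the diagonal matrix $D_\ell$ satisfies $D_\ell D_\ell^* = D_\ell^* D_\ell = I_L$, and its conjugate transpose is precisely the $\operatorname{diag}(f_\ell^*)$ appearing in~\eqref{eq:Omega def}. The Hermitian, PSD, and rank-$r$ assertions then follow at once: $X^* = D_\ell \hat{\Sigma}_x^* D_\ell^* = D_\ell \hat{\Sigma}_x D_\ell^* = X$ since $\hat{\Sigma}_x$ is Hermitian, and $X$ is a unitary conjugation of $\hat{\Sigma}_x$, hence shares its eigenvalues $\{\lambda_i\}_{i=1}^r$ (together with $L-r$ zeros), so $X$ is PSD of rank $r$. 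The case $\ell=0$, for which $f_0 \equiv 1$ and $X = \hat{\Sigma}_x$, is subsumed by this argument.

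It remains to show that each such $X$ solves the moment equations~\eqref{eq:P_y and T_y equations}. The key computation is to record the effect of the conjugation on a single entry: since $D_\ell$ is diagonal, $X[k_1,k_2] = f_\ell[k_1]\,\hat{\Sigma}_x[k_1,k_2]\,\overline{f_\ell[k_2]} = e^{-\imath 2\pi \ell(k_1-k_2)/L}\,\hat{\Sigma}_x[k_1,k_2]$, so every entry is multiplied by a unit-modulus phase depending only on the difference $k_1-k_2$, i.e. only on the diagonal to which the entry belongs. For $\mathcal{P}$ this is immediate: the phase attached to the main diagonal ($k_1-k_2=0$) is $1$, whence $\mathcal{P}(X)[k_1]=X[k_1,k_1]=\hat{\Sigma}_x[k_1,k_1]=P_y[k_1]$ by~\eqref{eq: P_mathcal expression} and Proposition~\ref{prop:f_2,f_4,f_6 explicit form}.

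The substantive step -- though still a direct calculation rather than a genuine obstacle -- is verifying $\mathcal{T}(X)=T_y$, and this reduces to checking that the diagonal phases cancel in pairs inside the definition~\eqref{eq: T_mathcal expression}. In the first product $X[k_1,k_2]\cdot\overline{X[k_3-k_2+k_1,k_3]}$ the two factors carry phases $e^{-\imath 2\pi \ell(k_1-k_2)/L}$ and $e^{+\imath 2\pi \ell((k_3-k_2+k_1)-k_3)/L}=e^{+\imath 2\pi \ell(k_1-k_2)/L}$, which multiply to $1$; the same cancellation occurs in the second product $X[k_1,k_3-k_2+k_1]\cdot\overline{X[k_2,k_3]}$, where the index differences $k_1-(k_3-k_2+k_1)=k_2-k_3$ and $k_2-k_3$ again match up under conjugation. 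Hence $\mathcal{T}(X)[k_1,k_2,k_3]=\mathcal{T}(\hat{\Sigma}_x)[k_1,k_2,k_3]=T_y[k_1,k_2,k_3]$, the last equality by Proposition~\ref{prop:f_2,f_4,f_6 explicit form}. I would note that this is exactly the ``easy'' direction of Lemma~\ref{lem:P_y and T_y consequence} specialized to $\varphi_m = 2\pi\ell m/L$, but I prefer the self-contained verification here because it avoids that lemma's hypothesis that all entries of $\hat{\Sigma}_x$ are nonzero, which is not assumed in the present statement. The only point demanding care is tracking the complex conjugations and the shifted index $k_3-k_2+k_1$ correctly; once the phase of a generic entry is recorded, the cancellations are forced by the pairing structure of $\mathcal{T}$.
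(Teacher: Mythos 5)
Your proof is correct and follows the paper's argument almost exactly for the Hermitian, PSD, and rank-$r$ claims: both treat $X=\operatorname{diag}(f_\ell)\,\hat{\Sigma}_x\,\operatorname{diag}(f_\ell^*)$ as a (unitary) similarity transformation of $\hat{\Sigma}_x$, hence sharing its eigenvalues. The one place you diverge is the verification of the moment equations~\eqref{eq:P_y and T_y equations}: the paper uses the identity $\operatorname{diag}(f_\ell)\,\hat{\Sigma}_x\,\operatorname{diag}(f_\ell^*)=\hat{\Sigma}_x\odot\operatorname{Circulant}\{f_\ell^*\}$ and then cites the ``if'' direction of Lemma~\ref{lem:P_y and T_y consequence}, whereas you inline the phase-cancellation computation (with $\varphi_m=2\pi\ell m/L$) directly into the definition of $\mathcal{T}$. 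Your index bookkeeping is right in both products, so the verification stands. Your accompanying remark is also well taken: Lemma~\ref{lem:P_y and T_y consequence} is stated under the hypothesis $\hat{\Sigma}_x[i,j]\neq 0$, which Proposition~\ref{prop:fundamental ambiguities} does not assume; that hypothesis is only needed for the ``only if'' direction (where one divides by entries of $\hat{\Sigma}_x$), so the paper's citation is harmless in substance, but your self-contained calculation makes the proposition's proof formally independent of it. The cost is a few extra lines of index tracking; the benefit is a cleaner logical dependency.
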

\begin{proof}
The matrices $\operatorname{diag}(f_\ell) \cdot \hat{\Sigma}_x \cdot \operatorname{diag}(f_\ell^*)$ (for every $\ell\in\{0,\ldots,L-1\}$) are Hermitian since $\hat{\Sigma}_x$ is Hermitian. They are also PSD with rank $r$ since they are similar to $\hat{\Sigma}_x$ (and hence share their eigenvalues with $\hat{\Sigma}_x$).
Last, observe that
\begin{equation}
\operatorname{diag}(f_\ell) \cdot \hat{\Sigma}_x \cdot \operatorname{diag}(f_\ell^*) = \hat{\Sigma}_x \odot f_\ell f_\ell^* = \hat{\Sigma}_x \odot \operatorname{Circulant} \left\{f_\ell^* \right\}, \label{eq:Omega elements with DFT vectors expression} 
\end{equation}
and hence Lemma~\ref{lem:P_y and T_y consequence} establishes that the matrix $\operatorname{diag}(f_\ell) \cdot \hat{\Sigma}_x \cdot \operatorname{diag}(f_\ell^*)$ satisfies the equations in~\eqref{eq:P_y and T_y equations}.
\end{proof}
Henceforth, whenever we refer to the recovery of $\hat{\Sigma}_x$, we essentially mean the recovery of any (arbitrary) element from the set $\Omega(\hat{\Sigma}_x)$.

Evidently, the set of equations~\eqref{eq:P_y and T_y equations} goes a long way in narrowing down the set of feasible covariance matrices, as solving~\eqref{eq:P_y and T_y equations} leaves us with only $L-1$ unknown parameters $\varphi_1,\ldots,\varphi_{L-1}\in [0,2\pi)$, which are to be determined in order to recover $\hat{\Sigma}_x$. This leads us to consider the following problem.
\begin{prob}[Circulant phase retrieval] \label{problem:Sigma_x diagonal phase retrieval problem}
Given $X = \hat{\Sigma}_x \odot \operatorname{Circulant} \left\{[1, e^{\imath \varphi_1} ,\ldots, e^{\imath \varphi_{L-1}} ] \right\}$ with unknown angles $\varphi_1,\ldots,\varphi_{L-1} \in [0,2\pi)$, determine $\hat{\Sigma}_x$ (or any arbitrary element from $\Omega(\hat{\Sigma}_x)$ of~\eqref{eq:Omega def}). 
\end{prob}
In a way, Problem~\ref{problem:Sigma_x diagonal phase retrieval problem} can be viewed as a certain phase retrieval problem, where the phases multiplying each diagonal of $\hat{\Sigma}_x$ (with circulant wrapping) are to be retrieved, hence the name ``circulant phase retrieval''.
In this regard, note that Lemma~\ref{lem:P_y and T_y consequence} considers a general matrix $X\in\mathbb{C}^{L\times L}$, and ignores the fact that we actually seek a matrix which is Hermitian and PSD, which are properties satisfied by the true covariance matrix $\hat{\Sigma}_x$. Without any further prior knowledge on $\hat{\Sigma}_x$ (not even its rank), a natural way to go about solving Problem~\ref{problem:Sigma_x diagonal phase retrieval problem} is to try to solve the following surrogate problem.
\begin{prob} \label{problem:PSD circulant phase programming}
Given $X = \hat{\Sigma}_x \odot \operatorname{Circulant} \left\{[1, e^{\imath \varphi_1} ,\ldots, e^{\imath \varphi_{L-1}} ] \right\}$ with unknown angles $\varphi_1,\ldots,\varphi_{L-1} \in [0,2\pi)$, find angles $\widetilde{\varphi}_1, \ldots, \widetilde{\varphi}_{L-1}\in[0,2\pi)$ such that $\widetilde{X} := X \odot \operatorname{Circulant} \{[1, e^{-\imath \widetilde{\varphi}_1} ,\ldots, e^{-\imath \widetilde{\varphi}_{L-1}} ] \}$ is Hermitian and PSD. 
\end{prob}
Suppose that we are able to solve Problem~\ref{problem:PSD circulant phase programming}, then a fundamental question is whether any $\widetilde{X}$ solving Problem~\ref{problem:PSD circulant phase programming} is also a solution to Problem~\ref{problem:Sigma_x diagonal phase retrieval problem}, i.e. whether $\widetilde{X}$ is in the set of feasible solutions $\Omega(\hat{\Sigma}_x)$.
It turns out that for certain $\hat{\Sigma}_x$ which are sufficiently low-rank, any $\widetilde{X}$ solving Problem~\ref{problem:PSD circulant phase programming} is indeed also a solution to Problem~\ref{problem:Sigma_x diagonal phase retrieval problem}. In particular, we establish that this is true if $r=1$ under mild conditions on $\hat{\Sigma}_x$, or if $1<r<\sqrt{L}$ and $\hat{\Sigma}_x$ satisfies a certain technical condition (Condition~\ref{cond:low-rank recovery} in Section~\ref{section: step 2}). These results are summarized by the next Lemma.

\begin{lem} \label{lem:low-rank diagonal phase retrieval}
Suppose that $\hat{\Sigma}_x[i,j]\neq 0$ for all $i,j$, and either $r=1$, or $1 < r <\sqrt{L}$ and Condition~\ref{cond:low-rank recovery} holds. 
Then, if $\widetilde{X}$ is a solution to Problem~\ref{problem:PSD circulant phase programming} (i.e. $\widetilde{X} = X \odot \operatorname{Circulant} \{[1, e^{-\imath \widetilde{\varphi}_1} ,\ldots, e^{-\imath \widetilde{\varphi}_{L-1}} ] \}$ is Hermitian and PSD, where $X$ is as described in Problem~\ref{problem:PSD circulant phase programming}) it is also a solution to Problem~\ref{problem:Sigma_x diagonal phase retrieval problem} (i.e. $\widetilde{X} \in \Omega(\hat{\Sigma}_x)$).
\end{lem}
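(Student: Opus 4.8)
The plan is to reduce the statement to showing that a single phase sequence is a character of $\mathbb{Z}_L$. Since the Hadamard product of two circulant matrices is circulant with entrywise-multiplied symbols, writing $\psi_m := \varphi_m - \widetilde{\varphi}_m$ (with $\psi_0 = 0$) gives $\widetilde{X} = \hat{\Sigma}_x \odot \operatorname{Circulant}\{[1, e^{\imath \psi_1}, \ldots, e^{\imath \psi_{L-1}}]\}$, i.e. $\widetilde{X}[k_1,k_2] = \hat{\Sigma}_x[k_1,k_2]\, e^{\imath \psi_{k_2 - k_1}}$ (indices mod $L$). By \eqref{eq:Omega elements with DFT vectors expression}, the elements of $\Omega(\hat{\Sigma}_x)$ are precisely the matrices $\hat{\Sigma}_x \odot \operatorname{Circulant}\{f_\ell^*\}$, for which $e^{\imath \psi_m} = \overline{f_\ell[m]} = e^{\imath 2\pi m \ell / L}$. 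Hence it suffices to prove that the unit-modulus sequence $g[m] := e^{\imath \psi_m}$ is a character of $\mathbb{Z}_L$, i.e. $g[m+m'] = g[m]\,g[m']$, since its only solutions are the $L$ maps $m \mapsto e^{\imath 2\pi m \ell / L}$, matching exactly the $L$ elements of $\Omega(\hat{\Sigma}_x)$. As a first constraint, Hermitian symmetry of $\widetilde{X}$ together with that of $\hat{\Sigma}_x$ and the assumption $\hat{\Sigma}_x[i,j] \neq 0$ forces $e^{\imath \psi_{-m}} = \overline{e^{\imath \psi_m}}$, so $g$ is already conjugate-symmetric.

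For $r = 1$ the character property follows cleanly from positive semidefiniteness at the level of $2\times 2$ minors. Since $\hat{\Sigma}_x$ has rank one, every $2\times 2$ principal minor $\hat{\Sigma}_x[k_1,k_1]\hat{\Sigma}_x[k_2,k_2] - |\hat{\Sigma}_x[k_1,k_2]|^2$ vanishes; because the phase modulation leaves all magnitudes unchanged, the corresponding minors of $\widetilde{X}$ vanish as well, so $\widetilde{X}$ is PSD of rank at most one, and of rank exactly one since its diagonal is positive. Writing $\widetilde{X} = w w^*$ and matching the phase of each entry to that of $\hat{\Sigma}_x[k_1,k_2]\,e^{\imath \psi_{k_2-k_1}}$, one finds $\psi_{k_2 - k_1} \equiv \gamma_{k_1} - \gamma_{k_2}$ for a sequence $\gamma$ built from the arguments of $w$ and of the single eigenvector $\hat{v}_1$. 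Independence of the left-hand side from the base index $k_1$ forces $\gamma$ to be affine in the index, and the cyclic consistency $\gamma_{k+L} = \gamma_k$ forces its slope $\psi_1$ to lie in $\frac{2\pi}{L}\mathbb{Z}$; thus $\psi_m \equiv m\,\psi_1$ is linear, i.e. $g$ is a character, which is the claim.

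For $1 < r < \sqrt{L}$ the preceding shortcut is unavailable: the $2\times 2$ minors of $\hat{\Sigma}_x$ no longer vanish, so $\widetilde{X}$ need not be rank-deficient and PSD alone no longer pins down the entrywise phases from pairwise comparisons. The plan here is to extract equality constraints on the phase increments from the higher-order positivity of $\widetilde{X}$, exploiting that $\hat{\Sigma}_x$ sits on the boundary of the PSD cone: for suitable index subsets the principal submatrices of $\hat{\Sigma}_x$ are rank-deficient, and coupling this rank deficiency with the positivity of the corresponding submatrix of $\widetilde{X}$ yields homogeneous linear equations in the $\psi_m$ controlling the cycle phases $\psi_m + \psi_{m'} - \psi_{m+m'}$. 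Assembling these equations over all admissible subsets produces exactly the matrix constructed in Algorithm~\ref{alg:step 2 direct}, which annihilates the vector encoding $g$. Condition~\ref{cond:low-rank recovery} is then invoked as the hypothesis guaranteeing that this matrix has the minimal possible kernel, with the bound $r < \sqrt{L}$ (equivalently $r^2 < L$) ensuring enough independent constraints relative to the $r^2$ bilinear modes $\hat{v}_i[\cdot]\,\overline{\hat{v}_j[\cdot]}$ appearing in the entries of $\hat{\Sigma}_x$. Together these force $g[m+m'] = g[m]g[m']$, hence $g$ is a character and $\widetilde{X} \in \Omega(\hat{\Sigma}_x)$.

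I expect the main obstacle to be precisely this last step for $r > 1$: turning the positive-semidefiniteness \emph{inequalities} into enough exact phase \emph{equalities}. Unlike the $r=1$ case, where a single tight family of $2\times 2$ minors suffices, here one must locate the active (tight) constraints coming from the rank-$r$ deficiency of $\hat{\Sigma}_x$ and then show that the resulting linear system admits no spurious solutions beyond the characters. Verifying that the constructed matrix has a one-dimensional kernel (modulo the known $\Omega$-ambiguity) is exactly the content of Condition~\ref{cond:low-rank recovery}, and confirming that $r^2 < L$ supplies the requisite count of independent equations is where the dimension bound enters; these two facts are what I would work hardest to establish rigorously.
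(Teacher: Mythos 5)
Your $r=1$ argument is correct and takes a genuinely different (and arguably more elementary) route than the paper. The paper conjugates by $\operatorname{diag}(\hat{v}_1)^{-1}$, invokes Sylvester's law of inertia to conclude that the circulant phase matrix itself is PSD, and then uses the trace/Frobenius identities $\sum_i\mu_i=L$, $\sum_i\mu_i^2=L^2$ to force a single nonzero eigenvalue, identifying the circulant as $f_\ell f_\ell^*$. You instead observe that the vanishing of all $2\times 2$ principal minors is preserved under the phase modulation, deduce $\widetilde{X}=ww^*$, and read off that $\psi_{k_2-k_1}=\gamma_{k_1}-\gamma_{k_2}$ must be a coboundary, hence linear with slope in $\frac{2\pi}{L}\mathbb{Z}$ by cyclic consistency. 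Both are valid; yours avoids the inertia and circulant-diagonalization machinery.

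For $1<r<\sqrt{L}$, however, there is a genuine gap. What you give is a plan, not a proof, and the mechanism you describe is not the one that actually works. You propose extracting tight linear constraints on the cycle phases $\psi_m+\psi_{m'}-\psi_{m+m'}$ from rank-deficient principal submatrices of $\hat{\Sigma}_x$; but for $r>1$ no individual principal submatrix constraint of $\widetilde{X}$ is ``active'' in a way that pins down phases, and it is not shown (nor clear) that any assembly of such constraints reproduces the matrix $\mathcal{A}$ of Algorithm~\ref{alg:step 2 direct}. The paper's actual argument (Section~\ref{section: step 2}, culminating in Propositions~\ref{prop:low-rank procedure necessary condition} and~\ref{prop:low-rank procedure derivation summary}) is structurally different: it forms the $L^2\times L^2$ matrix $S$ with blocks $\widetilde{X}\odot\overline{\mathcal{R}_{i,j}\{\widetilde{X}\}}$, proves $S$ is PSD because it is a permutation conjugate of $\widetilde{X}\otimes\overline{\widetilde{X}}$ (Lemma~\ref{lem:S properties}), bounds $\operatorname{Rank}\{H_{i,i}\}\leq r^2$ via the Hadamard-product rank inequality (Lemma~\ref{lem:H_ii properties}), and uses the resulting Gram factorization $S^{(i,j)}=K_iK_j^*$ with $K_i=V^{(i)}A_i$ to obtain the linear system $\mathcal{A}b=\mathbf{0}$ in the increments $\beta_m^{(1)}=e^{-\imath(\widetilde{\varphi}_m-\widetilde{\varphi}_{m-1})}$ and the auxiliary unknowns $B_{i,i+1}$; only then do $r<\sqrt{L}$ and Condition~\ref{cond:low-rank recovery} give a one-dimensional kernel, and a final counting argument against the $L$ elements of $\Omega(\hat{\Sigma}_x)$ (all of which are feasible by Proposition~\ref{prop:fundamental ambiguities}) closes the loop. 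None of these steps --- the construction of $S$, its positivity, the $r^2$ rank bound, the elimination of the nuisance variables $B_{i,i+1}$, and the final identification --- appears in your proposal, and you acknowledge as much. As written, the $r>1$ case is not established.
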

The proof of Lemma~\ref{lem:low-rank diagonal phase retrieval} is provided in Appendix~\ref{Appendix: Proof of lemma low-rank diagonal phase retrieval}.
In Section~\ref{section: step 2} we outline a polynomial-time procedure for solving Problem~\ref{problem:PSD circulant phase programming}, which is guaranteed to succeed if $r<\sqrt{L}$ and Condition~\ref{cond:low-rank recovery} holds, see Algorithm~\ref{alg:step 2 direct} (we note that even though Condition~\ref{cond:low-rank recovery} is not required for the claim of Lemma~\ref{lem:low-rank diagonal phase retrieval} in the case of $r=1$, we do require it for our guarantees on the success of Algorithm~\ref{alg:step 2 direct}). We mention that Condition~\ref{cond:low-rank recovery} arises naturally from the procedure described in Section~\ref{section: step 2}. While this condition is very technical and somewhat opaque, it can be easily tested for any $\Sigma_x$ using the singular values of a certain matrix whose construction is detailed in Section~\ref{section: step 2}. Moreover, Condition~\ref{cond:low-rank recovery} was observed to hold in all numerical experiments conducted in Section~\ref{section:numerical examples}. 

The following theorem is the main result concerning the recovery of low-rank covariance matrices from $P_y$ and $T_y$.
\begin{thm}[Low-rank recovery of $\hat{\Sigma}_x$] \label{thm:low-rank recovery}
Suppose that $\hat{\Sigma}_x[i,j]\neq 0$ for all $i,j$. If $r=1$, or if $1 < r <\sqrt{L}$ and Condition~\ref{cond:low-rank recovery} holds, then $\hat{\Sigma}_x$ (or any arbitrary element from $\Omega(\hat{\Sigma}_x)$) can be recovered from $P_y$ and $T_y$. Specifically, if $X$ is Hermitian, PSD, and satisfies equations~\eqref{eq:P_y and T_y equations}, then $X \in \Omega(\hat{\Sigma}_x)$.
\end{thm}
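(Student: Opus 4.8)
The plan is to show that the theorem is an immediate consequence of chaining Lemma~\ref{lem:P_y and T_y consequence} with Lemma~\ref{lem:low-rank diagonal phase retrieval}. The key observation I would exploit is that the matrix $X$ in the statement plays two roles at once: being a solution of the moment equations it must differ from $\hat{\Sigma}_x$ by a circulant phase matrix, while being Hermitian and PSD \emph{by hypothesis} it is simultaneously a trivial solution of the phase-correction problem, Problem~\ref{problem:PSD circulant phase programming}. This double role is what lets me collapse the two lemmas into the desired conclusion.

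First I would fix an arbitrary $X\in\mathbb{C}^{L\times L}$ that is Hermitian, PSD, and satisfies the moment equations~\eqref{eq:P_y and T_y equations}. Since $\hat{\Sigma}_x[i,j]\neq 0$ for all $i,j$, Lemma~\ref{lem:P_y and T_y consequence} applies and produces angles $\varphi_1,\ldots,\varphi_{L-1}\in[0,2\pi)$ with
\begin{equation*}
X = \hat{\Sigma}_x \odot \operatorname{Circulant}\left\{ [1, e^{\imath\varphi_1}, \ldots, e^{\imath\varphi_{L-1}}] \right\}.
\end{equation*}
This is exactly the form of the input matrix assumed in Problem~\ref{problem:Sigma_x diagonal phase retrieval problem} and in the hypothesis of Lemma~\ref{lem:low-rank diagonal phase retrieval}, so $X$ is a legitimate instance to which that lemma can be applied.

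Next I would verify that $X$ itself solves Problem~\ref{problem:PSD circulant phase programming} with the trivial choice of corrective angles $\widetilde{\varphi}_1 = \cdots = \widetilde{\varphi}_{L-1} = 0$. Indeed, $\operatorname{Circulant}\{[1,1,\ldots,1]\}$ is the all-ones matrix, so the associated $\widetilde{X} := X \odot \operatorname{Circulant}\{[1,e^{-\imath\widetilde{\varphi}_1},\ldots,e^{-\imath\widetilde{\varphi}_{L-1}}]\}$ equals $X$, which is Hermitian and PSD by assumption. Hence $\widetilde{X}=X$ is a valid solution of Problem~\ref{problem:PSD circulant phase programming}.

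Finally, under the standing hypotheses (namely $r=1$, or $1<r<\sqrt{L}$ together with Condition~\ref{cond:low-rank recovery}), Lemma~\ref{lem:low-rank diagonal phase retrieval} guarantees that \emph{any} solution of Problem~\ref{problem:PSD circulant phase programming} is also a solution of Problem~\ref{problem:Sigma_x diagonal phase retrieval problem}, i.e.\ lies in $\Omega(\hat{\Sigma}_x)$. Applying this to $\widetilde{X}=X$ yields $X \in \Omega(\hat{\Sigma}_x)$, which is precisely the claim. I expect no genuine obstacle at the level of the theorem itself: all of the mathematical content has already been absorbed into Lemma~\ref{lem:low-rank diagonal phase retrieval}, whose (appendix) proof must exploit the interaction between the low-rank, Hermitian, PSD structure and the circulant phase ambiguity to exclude spurious phase assignments. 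The rank bound $r<\sqrt{L}$ and Condition~\ref{cond:low-rank recovery} are where that difficulty truly concentrates, so the theorem reads as a clean corollary of the identifiability analysis developed earlier in the section.
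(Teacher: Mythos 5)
Your proposal is correct and takes exactly the route the paper itself uses: the paper's proof is the single line that the theorem ``follows immediately from combining Lemma~\ref{lem:P_y and T_y consequence} with Lemma~\ref{lem:low-rank diagonal phase retrieval}.'' You have merely made that combination explicit (correctly observing that a Hermitian, PSD solution $X$ of~\eqref{eq:P_y and T_y equations} is, via the trivial corrective angles, a solution of Problem~\ref{problem:PSD circulant phase programming}, to which Lemma~\ref{lem:low-rank diagonal phase retrieval} then applies), so there is nothing to add.
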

The proof of Theorem~\ref{thm:low-rank recovery} follows immediately from combining Lemma~\ref{lem:P_y and T_y consequence} with Lemma~\ref{lem:low-rank diagonal phase retrieval}.
Evidently, Theorem~\ref{thm:low-rank recovery} together with Proposition~\ref{prop:fundamental ambiguities} assert that a matrix $X$ is Hermitian, PSD, and satisfies equations~\eqref{eq:P_y and T_y equations} if and only if $X \in \Omega(\hat{\Sigma}_x)$.

Coupling the procedure for estimating $\hat{\Sigma}_x$ up to diagonal phase ambiguities (Algorithm~\ref{alg:step 1 alg}) with the procedure for resolving them (Algorithm~\ref{alg:step 2 direct}), we obtain a statistically consistent procedure for recovering $\hat{\Sigma}_x$ (see Theorem~\ref{thm:consistency of step 2, direct approach}), which has polynomial-time complexity. 

Now, while low-rank covariance matrices can be successfully recovered from $P_y$ and $T_y$, this is not the case for full-rank $\hat{\Sigma}_x$, as shown by the following proposition.
\begin{prop} [Full-rank $\hat{\Sigma}_x$] \label{prop:P_y and T_y ambiguity full rank}
Suppose that $\hat{\Sigma}_x[i,j]\neq 0$ for all $i,j$.
If $r = L$, then $\hat{\Sigma}_x$ cannot be recovered from only $P_y$ and $T_y$. That is, there exists a Hermitian and positive definite matrix $X$, with $X \notin \Omega(\hat{\Sigma}_x)$, which satisfies equations~\eqref{eq:P_y and T_y equations}.
\end{prop}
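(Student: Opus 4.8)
The plan is to construct an explicit counterexample by exploiting the full freedom left open by Lemma~\ref{lem:P_y and T_y consequence}. Since $r=L$, the true matrix $\hat{\Sigma}_x$ is Hermitian and positive definite, and by Lemma~\ref{lem:P_y and T_y consequence} the entire solution set of~\eqref{eq:P_y and T_y equations} consists precisely of the matrices $X = \hat{\Sigma}_x \odot \operatorname{Circulant}\{[1,e^{\imath\varphi_1},\ldots,e^{\imath\varphi_{L-1}}]\}$. The fundamental ambiguities $\Omega(\hat{\Sigma}_x)$ correspond, via~\eqref{eq:Omega elements with DFT vectors expression}, to the $L$ special phase vectors $f_\ell^*$, i.e. to the \emph{linear} phase progressions $\varphi_m = 2\pi\ell m/L$ for $\ell=0,\ldots,L-1$. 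The goal is therefore to produce a single Hermitian, positive-definite $X$ whose underlying phase vector is \emph{not} one of these $L$ linear progressions.

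First I would determine which phase vectors yield a Hermitian $X$. Writing out the requirement $X[k_1,k_2]=\overline{X[k_2,k_1]}$ and using that $\hat{\Sigma}_x$ is Hermitian with no zero entries, this reduces to the symmetry constraint $\varphi_{L-m} \equiv -\varphi_m \pmod{2\pi}$ (with $\varphi_0=0$) on the phase vector. The set of phase vectors satisfying this constraint is a continuous family of real dimension $\lfloor (L-1)/2\rfloor$, which is at least $1$ as soon as $L\geq 3$ (the regime of interest here). In contrast, the linear progressions defining $\Omega(\hat{\Sigma}_x)$ form only a discrete set of $L$ points inside this family; in particular, the only such progression lying in a sufficiently small neighbourhood of the zero phase vector (which gives $X=\hat{\Sigma}_x$) is the one for $\ell=0$ itself.

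Next I would invoke two elementary facts to conclude. Positive definiteness is an open condition, and $\hat{\Sigma}_x$ (obtained at $\varphi=0$) is positive definite because $r=L$; hence every $X$ built from a sufficiently small Hermitian phase vector remains positive definite. Moreover, because $\hat{\Sigma}_x[i,j]\neq 0$ for all $i,j$, the map sending a phase vector $z$ to $\hat{\Sigma}_x\odot\operatorname{Circulant}\{z\}$ is injective: equality of two such Hadamard products forces $z=z'$ entry-by-entry. Consequently, choosing any nonzero phase vector in the Hermitian family that is small enough to keep $X$ positive definite yields an $X$ that satisfies~\eqref{eq:P_y and T_y equations}, is Hermitian and positive definite, differs from $\hat{\Sigma}_x$ (by injectivity, since the phases are nonzero), and --- being distinct from every linear progression $f_\ell^*$ --- does not belong to $\Omega(\hat{\Sigma}_x)$ (again by injectivity).

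The main obstacle, and the only place any care is needed, is the dimension comparison in the second step: one must verify that the Hermitian constraint still leaves a genuine extra degree of freedom beyond the $L$ linear phase progressions, so that a Hermitian positive-definite solution outside $\Omega(\hat{\Sigma}_x)$ actually exists. Once the count $\lfloor(L-1)/2\rfloor\geq 1$ is established for $L\geq3$, positive definiteness comes for free from openness and the exclusion from $\Omega(\hat{\Sigma}_x)$ comes for free from the injectivity afforded by the non-vanishing hypothesis.
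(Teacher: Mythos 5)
Your proposal is correct and follows essentially the same route as the paper: the paper's proof takes the special case of your construction in which only the conjugate pair of diagonals $m=1$ and $m=L-1$ receives a small phase $e^{\pm\imath\varphi_1}$, then invokes continuity of eigenvalues to preserve positive definiteness, exactly as you do via openness. Your version is marginally more careful in justifying $X\notin\Omega(\hat{\Sigma}_x)$ (the paper declares this ``evident''), but the underlying argument is the same.
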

The proof of Proposition~\ref{prop:P_y and T_y ambiguity full rank} is provided in Appendix~\ref{appendix:proof of positive Sigma identifiability for P_y and T_y}.

\section{Recovering the covariance matrix $\Sigma_x$} \label{section:recovering the covariance}
In this section we describe our algorithms for estimating a low-rank $\hat{\Sigma}_x$ (and consequently the covariance matrix of $x$, i.e. $\Sigma_x$) using $N$ observations $y_1,\ldots,y_{N}$ from the model~\eqref{eq:MRFA model def}--\eqref{eq:x model}, with an arbitrary noise variance $\sigma^2$. We also provide appropriate statistical consistency guarantees for these algorithms.

\subsection{Step 1: Recovering $\hat{\Sigma}_x$ up to diagonal phase ambiguities} \label{section:step 1}
Given $N$ observations $y_1,\ldots,y_N$ drawn from the model~\eqref{eq:MRFA model def}--\eqref{eq:x model}, we first compute their Fourier transforms
\begin{equation}
\hat{y}_i = F y_i, \qquad i=1,\ldots,N,
\end{equation}
where $F$ is the DFT matrix from~\eqref{eq:DFT matrix and vectors def}. Next, we estimate $P_y$ and $T_y$ via
\begin{align}
&\widetilde{P}_y[k_1] = \frac{1}{N} \sum_{i=1}^N \left\vert \hat{y}_i[k_1] \right\vert^2,\label{eq: P_y estimator}  \\
& \widetilde{T}_y[k_1,k_2,k_3] = \frac{1}{N} \sum_{i=1}^N {\hat{y}_i}[k_1] \overline{\hat{y}_i[k_2]} {\hat{y}_i}[k_3] \overline{\hat{y}_i[k_1-k_2+k_3]}, \label{eq: T_y estimator} 
\end{align}
for $k_1,k_2,k_3 = 0,\ldots,L-1$.
Evidently, $\widetilde{P}_y$ and $\widetilde{T}_y$ are unbiased and consistent estimators for $P_y$ and $T_y$, respectively~\cite{wasserman2013all}.
We then proceed by constructing estimators for the diagonals of $\hat{\Sigma}_x$ using $\widetilde{P}_y$ and $\widetilde{T}_y$ (as shown below), and prove that they are statistically consistent as $N\rightarrow \infty$ up to arbitrary phase factors (i.e., multiplicative constants with unit magnitude). 

We now describe our estimation procedure in detail. Let us denote by $d_m\in\mathbb{C}^L$ a column vector given by the $m$'th diagonal (with circulant wrapping) of the matrix $\hat{\Sigma}_x + \sigma^2 I_L$, i.e.
\begin{equation}
d_m[k] = 
\begin{dcases}
\hat{\Sigma}_x[k,\operatorname{mod} ( k+m, L)], & m\neq 0, \\
\hat{\Sigma}_x[k,k] + \sigma^2, & m=0,
\label{eq:d_m def}
\end{dcases}
\end{equation}
for $m,k\in \{ 0,\ldots,L-1\}$.
In Appendix~\ref{appendix:Justification of expression for P_y and T_y with diagonals} we show that $P_y$ and $T_y$ can be expressed in terms of $d_0,\ldots,d_{L-1}$ as
\begin{align}
&P_y[k] = d_0[k], \label{eq:P_y expression diagonals}\\
&T_y[k_1,k_1+m,k_2+m] = d_m[k_1] \overline{d_m[k_2]} + d_{k_2-k_1}[k_1] \overline{d_{k_2-k_1}[k_1+m]},  \label{eq:T_y expression diagonals}
\end{align}
for every $k,k_1,k_2,m\in\{0,\ldots,L-1\}$.
Next, we define the matrices $G_m \in\mathbb{C}^{L\times L}$, $m=0,\ldots,L-1$, by
\begin{equation}
G_m = d_m d_m^*, \label{eq:G_m def}
\end{equation}
noting that $G_m \succeq 0$ with $\operatorname{rank}\{ G_m\}=1$, and rewrite~\eqref{eq:T_y expression diagonals} using~\eqref{eq:G_m def} as
\begin{equation}
T_y[k_1,k_1+m,k_2+m] = G_m[k_1,k_2] + G_{k_2-k_1}[k_1,k_1+m].\label{eq:T_y expression G_m}
\end{equation}
We next estimate the matrices $G_m$, for $m\geq 1$, by solving an optimization problem that fits $\widetilde{T}_y$ from~\eqref{eq: T_y estimator} to the form~\eqref{eq:T_y expression G_m} (replacing $T_y$ with its estimate $\widetilde{T}_y$), while removing the rank constraint on $G_m$. Specifically, we solve
\begin{equation}
\begin{aligned} \label{eq:optim for diagonals}
\{ \widetilde{G}_m \}_{m=1}^{L-1} = &\underset{ \{ G^{'}_m \}_{m=1}^{L-1}}{\operatorname{argmin}} \left\{ \sum_{k_1,k_2,m = 0}^{L-1} \left\vert {\widetilde{T}_y[k_1,k_1+m,k_2+m] - G^{'}_{k_2-k_1}[k_1,k_1+m] - G^{'}_m[k_1,k_2]} \right\vert^2 \right\}, \\
&\text{subject to} \;\; \left\{ G^{'}_m \succeq 0 \right\}_{m=1}^{L-1},\;\; G^{'}_0= \widetilde{P}_y\cdot \widetilde{P}_y^T,
\end{aligned}
\end{equation}
which is a linear least-squares problem with semidefinite constraints, hence a convex optimization problem readily solved by a variety of algorithms~\cite{grant2014cvx,boumal2014manopt}.
Even though we omitted the rank constraint on $G_m$ when solving~\eqref{eq:optim for diagonals}, the resulting estimates $\widetilde{G}_m$ approximate the matrices $G_m$  (as established in the proof of Theorem~\ref{thm:consistency of step 1} below) and are close to being rank-one, as exemplified in Figure~\ref{fig:eigenvalues of G_widetilde_m} for $L=10$, $r=3$, $N=10^4$, and $\sigma^2=0.05$.
\begin{figure}
  \centering  	
    \includegraphics[width=0.55\textwidth]{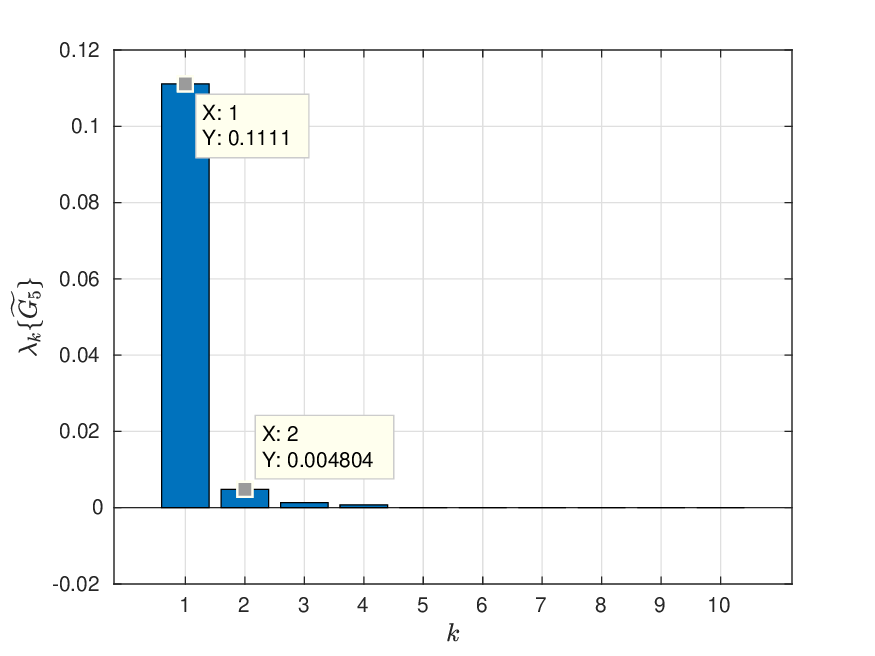}
	\caption[A structure] 
	{Eigenvalues of $\widetilde{G}_5$ for $L=10$, $r=3$, $N=10^4$, and $\sigma^2=0.05$, obtained from solving~\eqref{eq:optim for diagonals} for a simulated $\Sigma_x$ with eigenvalues $[1,0.7,0.5]$ and eigenvectors sampled uniformly from the sphere. It is evident that the matrix $\widetilde{G}_5$, which is an estimate of $G_5$, is close to being rank-one.}  \label{fig:eigenvalues of G_widetilde_m}
\end{figure}
Then, each diagonal $d_m$ for $m\geq 1$ is estimated from the best rank-one approximation to $\widetilde{G}_m$. In particular, if $\widetilde{\mu}_1^{(m)}$ is the largest eigenvalue of $\widetilde{G}_m$ and $\widetilde{u}_1^{(m)}$ is its corresponding eigenvector, then we estimate $d_m$ via
\begin{equation}
\widetilde{d}_m = \sqrt{\widetilde{\mu}_1^{(m)}} \widetilde{u}_1^{(m)}, \label{eq:d_m estimator}
\end{equation} 
noting that $\widetilde{d}_m$ is unique up to a phase factor, i.e. a constant $e^{\imath \varphi_m}$ multiplying $d_m$, where $\varphi_m \in [0,2\pi)$ is an (unknown) angle. Note that according to~\eqref{eq:P_y expression diagonals}, the main diagonal of $\hat{\Sigma}_x$, namely $d_0$, can be estimated directly from $\widetilde{P}_y$ and hence does not suffer from this phase ambiguity.
We therefore proceed by forming the matrix $\widetilde{C}_x\in\mathbb{C}^{L\times L}$, given by
\begin{equation} \label{eq:C_x_tilde def}
\widetilde{C}_x[k_1,k_2] = 
\begin{dcases}
\widetilde{d}_m[k_1], & \operatorname{mod}(k_2-k_1,L) = m, \\
\widetilde{P}_y[k_1] - \sigma^2, & k_2 = k_1,
\end{dcases}
\end{equation}
where the subtraction of $\sigma^2$ from the main diagonal of $\widetilde{C}_x$ corrects for the bias due to noise.
The following theorem establishes that $\widetilde{C}_x$ is a statistically-consistent estimate of $\hat{\Sigma}_x$ as $N\rightarrow \infty$, up to an unknown circulant phase matrix.
\begin{thm}[Consistency of~\eqref{eq:C_x_tilde def}] \label{thm:consistency of step 1}
Suppose that $\hat{\Sigma}_x[i,j]\neq 0$ for all $i,j$. Then,
\begin{align}
\min_{\varphi_1,\ldots,\varphi_{L-1}\in [0,2\pi)}\left\Vert \widetilde{C}_x - \hat{\Sigma}_x \odot \operatorname{Circulant} \left\{ [1,e^{\imath \varphi_1} ,\ldots, e^{\imath \varphi_{L-1}} ] \right\} \right\Vert_F &\underset{N\rightarrow\infty, \; \text{a.s.}}{\longrightarrow} 0. \label{eq:C_x_tilde consistency with ambiguity}
\end{align}
\end{thm}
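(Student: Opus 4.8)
The plan is to propagate the almost-sure convergence of the empirical moments through each stage of the construction: a strong law of large numbers for $\widetilde{P}_y,\widetilde{T}_y$, a stability (continuity) argument for the convex program~\eqref{eq:optim for diagonals}, a spectral-perturbation argument for the rank-one extraction~\eqref{eq:d_m estimator}, and finally an entrywise assembly of $\widetilde{C}_x$ via~\eqref{eq:C_x_tilde def}. First I would show that $\widetilde{P}_y\to P_y$ and $\widetilde{T}_y\to T_y$ almost surely as $N\to\infty$. The estimators~\eqref{eq: P_y estimator}--\eqref{eq: T_y estimator} are empirical averages of fixed polynomials in the entries of the i.i.d.\ observations $\hat{y}_i$; since the $a_i$ and $\hat{\eta}$ are Gaussian, all required moments are finite and the strong law applies entrywise, with the limits identified (after removing the known noise bias) by Proposition~\ref{prop:f_2,f_4,f_6 explicit form}.

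Next I would argue that the minimizer of~\eqref{eq:optim for diagonals} depends continuously on its data at the population point, so that $\widetilde{G}_m\to G_m=d_m d_m^{*}$ almost surely for every $m\ge 1$. The objective is continuous and nonnegative and vanishes at the true $\{G_m\}$, so the population problem attains value $0$ there; moreover every feasible $G_m'\succeq 0$ has its diagonal pinned to $\operatorname{diag}(d_m d_m^{*})$ by the $k_2=k_1$ equations together with the fixed $G_0'$, so the relevant feasible set is bounded, and positive semidefiniteness bounds the off-diagonals by the diagonal. A standard compactness/epi-convergence argument then shows that any limit point of $\{\widetilde{G}_m\}$ is feasible with zero objective for the limiting data $T_y$. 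It therefore suffices to prove that the true $\{d_m d_m^{*}\}$ is the \emph{unique} feasible minimizer of the population problem.

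Granting this uniqueness, the conclusion follows quickly. Each $G_m$ is rank one with a simple nonzero leading eigenvalue $\Vert d_m\Vert^2>0$ (nonzero because the hypothesis $\hat{\Sigma}_x[i,j]\neq0$ forces $d_m\neq0$), so Weyl's inequality and the Davis--Kahan theorem give that $\widetilde{d}_m=\sqrt{\widetilde{\mu}_1^{(m)}}\,\widetilde{u}_1^{(m)}$ converges to $d_m$ up to a (possibly $N$-dependent) unit-modulus factor $e^{\imath\varphi_m}$, almost surely. The main diagonal $d_0$ is read off directly from $\widetilde{P}_y-\sigma^2$ and carries no phase ambiguity. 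Assembling these limits into $\widetilde{C}_x$ and choosing the phases $\varphi_1,\ldots,\varphi_{L-1}$ in~\eqref{eq:C_x_tilde consistency with ambiguity} to match the factors above makes every circulant diagonal of the difference vanish, so $\widetilde{C}_x-\hat{\Sigma}_x\odot\operatorname{Circulant}\{[1,e^{\imath\varphi_1},\ldots,e^{\imath\varphi_{L-1}}]\}\to 0$ entrywise, hence in Frobenius norm, yielding~\eqref{eq:C_x_tilde consistency with ambiguity}.

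The hard part is the population uniqueness. Writing $H_m=G_m'-d_m d_m^{*}$, the objective-zero equations reduce to the coupled, zero-diagonal relations
\[
H_m[k_1,k_1+n] = -\,H_n[k_1,k_1+m], \qquad k_1,m,n\in\{0,\ldots,L-1\},
\]
so the \emph{linear} constraints alone admit a large null space (any $H$ that is antisymmetric in $(m,n)$ for each fixed $k_1$), and the positive-semidefinite constraints are indispensable. I would reduce the problem to showing that every feasible $G_m'\succeq0$ saturates the Cauchy--Schwarz bound $|G_m'[k_1,k_2]|=|d_m[k_1]|\,|d_m[k_2]|$ on all off-diagonals; this forces all $2\times2$ minors of $G_m'$ to vanish, hence $\operatorname{rank}G_m'\le1$, so $G_m'=d_m'(d_m')^{*}$. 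One then forms the matrix $X$ whose $m$-th circulant diagonal is $d_m'$, checks that $X$ satisfies the moment equations~\eqref{eq:P_y and T_y equations}, and invokes Lemma~\ref{lem:P_y and T_y consequence} to conclude $d_m'=e^{\imath\varphi_m}d_m$, i.e.\ $G_m'=d_m d_m^{*}=G_m$. Proving the saturation step is the crux, since the pairwise $2\times2$ constraints by themselves only confine each off-diagonal perturbation to the intersection of two disks meeting at the origin and do not eliminate it; the argument must exploit the \emph{global} coupling across all $m$ jointly with full positive semidefiniteness.
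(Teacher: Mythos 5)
Your outer scaffolding matches the paper's proof almost exactly: strong law for $\widetilde{P}_y,\widetilde{T}_y$, stability of the convex program, Weyl plus Davis--Kahan for the rank-one extraction, and entrywise assembly of $\widetilde{C}_x$. (The paper's stability argument is slightly different from your compactness/epi-convergence sketch --- it bounds the population objective $J(\widetilde{g})$ directly via the triangle inequality and then invokes convergence of minimizers of a convex function with a unique minimizer --- but either route works once uniqueness is in hand.) The problem is that you have explicitly left the load-bearing step unproven: the uniqueness of $\{d_m d_m^{*}\}$ as the feasible zero of the population problem. You correctly observe that pinning the main diagonal and the $m$'th circulant diagonal of $G_m'$, combined with the pairwise $2\times 2$ PSD minors, does not by itself kill the off-diagonal perturbations, and you stop there. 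That is a genuine gap, not a routine omission, since without this lemma the whole convergence chain collapses.

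The missing idea in the paper is a two-stage bootstrap that never needs to saturate Cauchy--Schwarz on \emph{all} off-diagonals at once. After normalizing $\hat{G}_m^\star = \operatorname{diag}\{d_m\}^{-1} G_m^\star \operatorname{diag}\{d_m^*\}^{-1}$ (legal since $\hat{\Sigma}_x[i,j]\neq 0$), the pinned diagonals give $\hat{G}_m^\star[k,k]=1$ and $\hat{G}_m^\star[k,k+m]=1$. For $m=1$ this means the unit-diagonal PSD matrix $\hat{G}_1^\star$ has every entry on its first circulant superdiagonal equal to $1$; viewing it as a correlation matrix, each pair $(z_1[k],z_1[k+1])$ is perfectly correlated, and \emph{chaining these equalities around the cycle} $k=0,1,\ldots,L-1$ forces $z_1[0]=\cdots=z_1[L-1]$, hence $\hat{G}_1^\star=\mathbf{1}\mathbf{1}^T$ and $G_1^\star=d_1 d_1^*$. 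This is the global coupling you were looking for, but along a single cyclic path rather than across all entries. Now feed $G_1^\star=G_1$ back into the coupled equations $T_y[k_1,k_1+1,k_2+1]=G_1[k_1,k_2]+G^\star_{k_2-k_1}[k_1,k_1+1]$ to pin the \emph{first} circulant diagonal of every $G_m^\star$, i.e.\ $\hat{G}_m^\star[k,k+1]=1$ for all $m$; repeating the same chaining argument gives $\hat{G}_m^\star=\mathbf{1}\mathbf{1}^T$ and $G_m^\star=d_m d_m^*$ for every $m$. Without this bootstrap (or an equivalent argument) your proof is incomplete at exactly the point you flagged.
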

The proof of Theorem~\ref{thm:consistency of step 1} is provided in Appendix~\ref{appendix: Proof of consistency of step 1}.

Now, from a practical standpoint, it is worthwhile to briefly consider the real-valued case, where $v_i\in \mathbb{R}^L$, $a_i \sim \mathcal{N}(0,\lambda_i)$, and $\eta\sim\mathcal{N}(0,\sigma^2 I_L)$. The only difference between the real-valued case and the complex-valued case lies in the expression for the trispectrum $T_y$, which now admits an additional additive term. In particular, we show in Appendix~\ref{appendix:trispectrum for real-valued} that instead of~\eqref{eq:T_y expression G_m} we have
\begin{equation}
T_y[k_1,k_1+m,k_2+m] = G_m[k_1,k_2] + G_{k_2-k_1}[k_1,k_1+m] + G_{k_1+k_2+m}[-k_2,-k_2-m]. \label{eq:T_y expression G_m real-valued case}
\end{equation}
Therefore, analogously to~\eqref{eq:optim for diagonals}, we propose to solve
\begin{equation}
\label{eq:optim for diagonals real-valued case}
\begin{gathered} 
\begin{aligned}
\{ \widetilde{G}_m \}_{m=1}^{L-1} &= \underset{ \{ G^{'}_m \}_{m=1}^{L-1}}{\operatorname{argmin}} \bigg\{ \sum_{k_1,k_2,m = 0}^{L-1} \bigg\vert \widetilde{T}_y[k_1,k_1+m,k_2+m] - G^{'}_{k_2-k_1}[k_1,k_1+m] \\
&\qquad \qquad\qquad\qquad\qquad\qquad\qquad- G^{'}_m[k_1,k_2] - G^{'}_{k_1+k_2+m}[-k_2,-k_2-m] \bigg\vert^2 \bigg\}, 
\end{aligned}\\
\text{subject to} \;\; \left\{ G^{'}_m \succeq 0 \right\}_{m=1}^{L-1},\;\; G^{'}_0= \widetilde{P}_y\cdot \widetilde{P}_y^T,
\end{gathered}
\end{equation}
and proceed with the estimation of $\widetilde{d}_m$ and the construction of $\widetilde{C}_x$ as in the complex-valued case. Due to the additional term in the expression for the trispectrum in~\eqref{eq:T_y expression G_m real-valued case}, the proof of the analogue of Theorem~\ref{thm:consistency of step 1} in the real-valued case is somewhat more complicated, and is left for a future work.
Nonetheless, we demonstrate by numerical experiments in Section~\ref{section:numerical examples} that the proposed approach for the real-valued case provides results that are very similar to the complex-valued case.

The algorithm for recovering $\hat{\Sigma}_x$ up to unknown diagonal phase ambiguities, for both the complex-valued and the real-valued cases, is described in Algorithm~\ref{alg:step 1 alg}.

\begin{algorithm}
\caption{Recovering $\hat{\Sigma}_x$ up to diagonal phase ambiguities}\label{alg:step 1 alg}
\begin{algorithmic}[1]
\Statex{\textbf{Required:} Measurements $y_1,\ldots,y_N$ from the model~\eqref{eq:MRFA model def}--\eqref{eq:x model}, for either the real-valued case or the complex-valued case}.
\State Compute the Fourier transforms of the measurements $\hat{y}_i = F y_i$ using the Fast Fourier Transform (FFT), where $F$ is the DFT matrix~\eqref{eq:DFT matrix and vectors def}.
\State Compute $\widetilde{P}_y$ and $\widetilde{T}_y$ according to~\eqref{eq: P_y estimator} and~\eqref{eq: T_y estimator}, respectively.
\State Obtain the $L\times L$ matrices $\{ \widetilde{G}_m \}_{m=1}^{L-1}$ by solving problem~\eqref{eq:optim for diagonals} for the complex-valued case, or solving problem~\eqref{eq:optim for diagonals real-valued case} for the real-valued case. \label{alg 1:solve opt prob}
\State Evaluate $\widetilde{d}_m$ from the best rank-one approximation to $\widetilde{G}_m$ via~\eqref{eq:d_m estimator}, for $m=1,\ldots,L-1$.
\State Output the $L\times L$ matrix $\widetilde{C}_x$ from~\eqref{eq:C_x_tilde def}.
\end{algorithmic}
\end{algorithm}

\begin{remark}
It is worthwhile to point out that problem~\eqref{eq:optim for diagonals} is ill-posed without the semidefinite constraints. Removing the semidefinite constraints in~\eqref{eq:optim for diagonals} results in a linear least-squares system with $L^3$ equations, and a smaller number of $L^3 - L^2$ variables (due to the constraint $G^{'}_0= \widetilde{P}_y\cdot \widetilde{P}_y^T$), which is not underdetermined. Yet, we observe that for every triplet of indices $(k_1,k_2,m)$ there exists another triplet $(k_1,k_1+m,k_2-k_1)$ which results in exactly the same equation as for the first triplet (since the terms $G^{'}_{k_2-k_1}[k_1,k_1+m]$ and $G^{'}_m[k_1,k_2]$ in~\eqref{eq:optim for diagonals} interchange). Therefore, the number of independent equations is actually smaller than the number of variables and the problem is ill-posed. Yet, it turns out that the semidefinite constraints resolve this ill-posedness, as established in the proof of Theorem~\ref{thm:consistency of step 1}.
\end{remark}

\begin{remark} \label{remark:trispectrum symmetries}
We mention that the trispectrum $T_y$ admits several symmetries which can be exploited to reduce the computational burden of Algorithm~\ref{alg:step 1 alg}. 
Notice from~\eqref{eq:T_y def} that swapping the first and  third, or second and fourth indices of $M^{(4)}_{\hat{y}}$ does not change the value of $M^{(4)}_{\hat{y}}$. Therefore, it is clear that
\begin{equation}
\widetilde{T}_y[k_1,k_1+m,k_2+m] = \widetilde{T}_y[k_2+m,k_1+m,k_1] = \widetilde{T}_y[k_2+m,k_2,k_1] = \widetilde{T}_y[k_1,k_2,k_2+m],
\end{equation}
hence it is sufficient to estimate only about a quarter of the elements of $T_y$.
\end{remark}

\subsection{Step 2: Resolving the diagonal phase ambiguities} \label{section: step 2}
We consider an estimator for $\hat{\Sigma}_x$ of the form
\begin{equation}
\widetilde{\hat{\Sigma}}_x := \widetilde{C}_x \odot \operatorname{Circulant}\{[1,e^{-\imath \widetilde{\varphi}_1},\ldots,e^{-\imath \widetilde{\varphi}_{L-1}}] \}, \label{eq:covariance estimator with phase}
\end{equation}
where $\widetilde{C}_x$ is from~\eqref{eq:C_x_tilde def}. In this section, we derive a procedure to find the angles $\widetilde{\varphi}_1,\ldots,\widetilde{\varphi}_{L-1} \in [0,2\pi)$ such that $\widetilde{\hat{\Sigma}}_x$ is close to being Hermitian and PSD.
For simplicity of presentation, we derive the procedure in the limiting case of $N\rightarrow \infty$. Specifically, we consider the setting of Problem~\ref{problem:PSD circulant phase programming}, where 
we assume that we have access to the matrix $X\in\mathbb{C}^{L\times L}$, given by
\begin{equation}
X = \hat{\Sigma}_x \odot \operatorname{Circulant} \{ [1, e^{\imath \varphi_1},\ldots, e^{\imath \varphi_{L-1}} ] \}, \label{eq:X def}
\end{equation}
with unknown angles $\varphi_1,\ldots,\varphi_{L-1}$, and seek angles $\widetilde{\varphi}_1,\ldots,\widetilde{\varphi}_{L-1}$ such that the matrix
\begin{equation}
\widetilde{X} := X\odot \operatorname{Circulant} \{ [1, e^{-\imath \widetilde{\varphi}_1},\ldots, e^{-\imath \widetilde{\varphi}_{L-1}} ] \}, \label{eq:X_tilde def}
\end{equation}
is Hermitian and PSD.

Let us define the matrices ${H}_{i,j}\in\mathbb{C}^{L\times L}$, for $i,j=0,\ldots,L-1$, by
\begin{equation}
{H}_{i,j} = X \odot \overline{R_{i,j}\{ X \}},	\label{eq:H_ij_tilde def}
\end{equation}
where $R_{i,j}\{ X\}$ is the operation of cyclically shifting the rows and columns of $X$ by $i$ and $j$, respectively, namely
\begin{equation}
R_{i,j}\{ X\}[k,m] = X[\operatorname{mod} (k-i,L),\operatorname{mod} (m-j,L)]. \label{eq:R_ij def}
\end{equation}
The following lemma summarizes several properties of $H_{i,i}$ required for our derivation.
\begin{lem}\label{lem:H_ii properties}
The matrix $H_{i,i}$ (taking $j=i$ in~\eqref{eq:H_ij_tilde def}) is given explicitly by
\begin{equation}
{H}_{i,i} = \hat{\Sigma}_x \odot \overline{R_{i,i}\{ {\hat{\Sigma}_x} \} }, \label{eq:H_ii expression} 
\end{equation}
and is Hermitian, PSD, and satisfies
\begin{equation}
\operatorname{Rank}\left\{ H_{i,i} \right\} 
\leq r^2, \label{eq:H_ii rank bound}
\end{equation}
for every $i=0,\ldots,L-1$.
\end{lem}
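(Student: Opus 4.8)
The plan is to establish the three claims in the order listed, deriving the explicit formula \eqref{eq:H_ii expression} first and then reading off the Hermitian/PSD property and the rank bound from it. First I would write $X[k,m] = \hat{\Sigma}_x[k,m]\, c[k,m]$, where $c[k,m] := e^{\imath \varphi_{\operatorname{mod}(m-k,L)}}$ (with $\varphi_0:=0$) is the entry of the circulant phase matrix from \eqref{eq:X def}; crucially, $c[k,m]$ depends on $k,m$ only through $\operatorname{mod}(m-k,L)$. The key observation is that the joint shift $\mathcal{R}_{i,i}$ preserves this index difference: from \eqref{eq:R_ij def}, $\mathcal{R}_{i,i}\{X\}[k,m] = X[k-i,m-i] = \hat{\Sigma}_x[k-i,m-i]\,c[k-i,m-i]$, and since $(m-i)-(k-i)\equiv m-k$, we have $c[k-i,m-i]=c[k,m]$. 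Substituting into \eqref{eq:H_ij_tilde def} with $j=i$ yields
\begin{equation*}
H_{i,i}[k,m] = \hat{\Sigma}_x[k,m]\, c[k,m]\, \overline{\hat{\Sigma}_x[k-i,m-i]}\, \overline{c[k,m]} = \hat{\Sigma}_x[k,m]\,\overline{\hat{\Sigma}_x[k-i,m-i]},
\end{equation*}
using $|c[k,m]|^2 = 1$. The right-hand side is exactly the $(k,m)$ entry of $\hat{\Sigma}_x \odot \overline{\mathcal{R}_{i,i}\{\hat{\Sigma}_x\}}$, proving \eqref{eq:H_ii expression}. This is the step where the specific structure of the ambiguity (a \emph{circulant} phase matrix) enters, and it explains why joint row/column shifts eliminate the unknown phases entirely.

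For the Hermitian and PSD claim, I would invoke the Schur Product Theorem. The matrix $\hat{\Sigma}_x$ is Hermitian and PSD by \eqref{eq:Sigma_x_hat def}. Writing $\mathcal{R}_{i,i}\{\hat{\Sigma}_x\} = \Pi_i \hat{\Sigma}_x \Pi_i^*$, where $\Pi_i$ is the unitary cyclic-shift permutation matrix sending index $k$ to $k-i$, shows that $\mathcal{R}_{i,i}\{\hat{\Sigma}_x\}$ is Hermitian and PSD with the same spectrum as $\hat{\Sigma}_x$; its entrywise conjugate $\overline{\mathcal{R}_{i,i}\{\hat{\Sigma}_x\}}$ is then likewise Hermitian and PSD. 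Since the Hadamard product of two Hermitian PSD matrices is again Hermitian and PSD, the explicit form \eqref{eq:H_ii expression} immediately gives that $H_{i,i}$ is Hermitian and PSD.

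Finally, for the rank bound, I would use the rank-one (spectral) decompositions of the two Hadamard factors. From \eqref{eq:Sigma_x_hat def}, $\hat{\Sigma}_x = \sum_{p=1}^r \lambda_p \hat{v}_p \hat{v}_p^*$, and correspondingly $\overline{\mathcal{R}_{i,i}\{\hat{\Sigma}_x\}} = \sum_{q=1}^r \lambda_q\, w_q w_q^*$, with $w_q := \overline{\Pi_i \hat{v}_q}$ (the conjugated, shifted eigenvectors) and real eigenvalues $\lambda_q$; each factor is thus a sum of $r$ rank-one terms. Expanding the Hadamard product and using the elementary identity $(a a^*)\odot(b b^*) = (a\odot b)(a\odot b)^*$, one obtains
\begin{equation*}
H_{i,i} = \sum_{p=1}^r \sum_{q=1}^r \lambda_p \lambda_q\, (\hat{v}_p \odot w_q)(\hat{v}_p \odot w_q)^*,
\end{equation*}
a sum of at most $r^2$ rank-one matrices, so $\operatorname{Rank}\{H_{i,i}\}\le r^2$ as claimed. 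None of the three steps presents a serious obstacle; the only point demanding care is the phase-cancellation bookkeeping in the first step, where one must verify that the circulant phase factor is constant along each (wrapped) diagonal and hence invariant under $\mathcal{R}_{i,i}$, which is what makes the unknown angles $\varphi_1,\ldots,\varphi_{L-1}$ drop out of $H_{i,i}$.
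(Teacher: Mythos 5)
Your proof is correct and follows essentially the same route as the paper: phase cancellation along wrapped diagonals for the explicit form, the Schur product theorem for the PSD property, and the Hadamard rank bound $\operatorname{Rank}\{A\odot B\}\le \operatorname{Rank}\{A\}\operatorname{Rank}\{B\}$ for \eqref{eq:H_ii rank bound}. The only difference is that you prove the two standard Hadamard-product facts inline (via the identity $(aa^*)\odot(bb^*)=(a\odot b)(a\odot b)^*$) where the paper simply cites them, which is a matter of detail rather than of approach.
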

The proof of Lemma~\ref{lem:H_ii properties} is provided in Appendix~\ref{appendix:proof of properties of H_ii}.
Next, using~\eqref{eq:X_tilde def}, we define the matrix ${S}\in\mathbb{C}^{L^2\times L^2}$ via its $L\times L$ blocks $S^{(i,j)}$ as
\begin{equation} \label{eq:Kron product block formula}
S^{(i,j)} := \widetilde{X} \odot \overline{R_{i,j}\{ \widetilde{X} \} } = 
\begin{dcases}
{H}_{i,i}, & i=j, \\
{H}_{i,j} \odot \operatorname{Circulant}\{[\beta_0^{(j-i)},\beta_1^{(j-i)},\ldots,\beta_{L-1}^{(j-i)}] \} , & i\neq j,
\end{dcases}
\end{equation}
where $S^{(i,j)}\in\mathbb{C}^{L\times L}$ denotes the $(i,j)$'th $L\times L$ block of ${S}$, and 
\begin{equation}
\beta_m^{(j-i)} = e^{-\imath ( \widetilde{\varphi}_m - \widetilde{\varphi}_{m-j+i})}.	\label{eq:beta def}
\end{equation}
We have the following lemma regarding the matrix ${S}$ of~\eqref{eq:Kron product block formula}.
\begin{lem} \label{lem:S properties}
If $\widetilde{X}$ of~\eqref{eq:X_tilde def} is Hermitian and PSD, then ${S}$ of~\eqref{eq:Kron product block formula} is also Hermitian and PSD.
\end{lem}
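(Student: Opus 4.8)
The plan is to work directly from the defining relation $S^{(i,j)} = \widetilde{X}\odot\overline{\mathcal{R}_{i,j}\{\widetilde{X}\}}$ given on the first line of~\eqref{eq:Kron product block formula}, since the phase factors $\beta_m^{(j-i)}$ and the matrices $H_{i,j}$ are irrelevant to positivity. Indexing the $L^2\times L^2$ matrix $S$ by pairs $(i,k)$, where $i$ labels the block and $k$ the position within it, the definition together with~\eqref{eq:R_ij def} gives the entrywise formula
\begin{equation*}
S[(i,k),(j,m)] = \widetilde{X}[k,m]\cdot\overline{\widetilde{X}[k-i,\,m-j]},
\end{equation*}
with all indices modulo $L$. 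The Hermitian property then follows at once by substituting $\overline{\widetilde{X}[m,k]}=\widetilde{X}[k,m]$ and $\widetilde{X}[m-j,k-i]=\overline{\widetilde{X}[k-i,m-j]}$, both valid because $\widetilde{X}$ is Hermitian.

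For positive semidefiniteness I would exhibit an explicit Gram factorization. Diagonalize the Hermitian PSD matrix as $\widetilde{X}=\sum_p\mu_p w_p w_p^*$ with $\mu_p\ge 0$, so that $\widetilde{X}[k,m]=\sum_p\mu_p w_p[k]\overline{w_p[m]}$. Substituting this expression and its conjugate into the entrywise formula for $S$, and grouping the factors depending on $(i,k)$ apart from those depending on $(j,m)$, yields
\begin{equation*}
S[(i,k),(j,m)] = \sum_{p,q}\mu_p\mu_q\,\xi_{p,q}[(i,k)]\,\overline{\xi_{p,q}[(j,m)]},
\qquad \xi_{p,q}[(i,k)] := w_p[k]\,\overline{w_q[k-i]}.
\end{equation*}
Hence $S=\sum_{p,q}\mu_p\mu_q\,\xi_{p,q}\xi_{p,q}^*$ is a nonnegative combination of rank-one PSD matrices, and is therefore Hermitian and PSD; this single identity settles both assertions simultaneously.

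An equivalent and more conceptual route, which also explains the label attached to~\eqref{eq:Kron product block formula}, is to recognize the vectors $\xi_{p,q}$ as a fixed rearrangement of the Kronecker vectors $w_p\otimes\overline{w_q}$. The index bijection $\phi(s,t)=(t,\,t-s)$ on $\mathbb{Z}_L^2$ carries $(w_p\otimes\overline{w_q})[(k,k')]=w_p[k]\overline{w_q[k']}$ to $\xi_{p,q}$, so writing $P$ for the associated permutation matrix one obtains $S=P\,(\widetilde{X}\otimes\overline{\widetilde{X}})\,P^{*}$. Since $\widetilde{X}\succeq 0$ implies $\overline{\widetilde{X}}\succeq 0$, the Kronecker product of two PSD matrices is PSD, and congruence by the orthogonal matrix $P$ preserves both the Hermitian and PSD properties, the claim follows.

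I expect the only genuine obstacle to be index bookkeeping: confirming that $\phi$ is truly a bijection of $\mathbb{Z}_L^2$ (equivalently, that the cyclic shifts in $\mathcal{R}_{i,j}$ respect the intended grouping), and keeping the modular arithmetic and the conjugations consistent throughout. Once the entrywise formula for $S$ is recorded correctly, the factorization---whether as the explicit Gram sum or as the permutation congruence of $\widetilde{X}\otimes\overline{\widetilde{X}}$---is routine.
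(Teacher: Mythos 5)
Your proposal is correct. Your second, ``more conceptual'' route is precisely the paper's proof: the paper verifies Hermiticity from the same entrywise formula $S[k_1+iL,k_2+jL]=\widetilde{X}[k_1,k_2]\,\overline{\widetilde{X}[\operatorname{mod}(k_1-i,L),\operatorname{mod}(k_2-j,L)]}$ and then exhibits the permutation $P$ with $PSP^T=\widetilde{X}\otimes\overline{\widetilde{X}}$, concluding via the fact that the eigenvalues of a Kronecker product are the pairwise products of the factors' eigenvalues. Your first route --- expanding $\widetilde{X}=\sum_p\mu_p w_pw_p^*$ and reading off the Gram factorization $S=\sum_{p,q}\mu_p\mu_q\,\xi_{p,q}\xi_{p,q}^*$ with $\xi_{p,q}[(i,k)]=w_p[k]\overline{w_q[k-i]}$ --- is a genuinely different and slightly more elementary argument: it settles Hermiticity and positive semidefiniteness in one identity without invoking the spectral theory of Kronecker products, at the cost of being less transparent about why the matrix $S$ is ``really'' a rearranged $\widetilde{X}\otimes\overline{\widetilde{X}}$ (a structural fact the paper's labeling of~\eqref{eq:Kron product block formula} alludes to and which your bijection $\phi(s,t)=(t,t-s)$ makes precise). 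The only bookkeeping point you flag --- that $\phi$ is a bijection of $\mathbb{Z}_L^2$ --- is immediate since $\phi$ is invertible with $\phi^{-1}(k,k')=(k-k',k)$, so there is no gap in either route.
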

The proof of Lemma~\ref{lem:S properties} is provided in Appendix~\ref{appendix:Proof of Lemma S properties}.
From Lemma~\ref{lem:S properties} it follows that for appropriate angles $\widetilde{\varphi}_1,\ldots,\widetilde{\varphi}_{L-1}$ such that $\widetilde{X}$ is Hermitian and PSD, ${S}$ is also Hermitian and PSD, and we can write
\begin{equation}
{S} = {K} {K}^*, \qquad \qquad S^{(i,j)} = {K}_i {K}_j^*, \label{eq:Kron product PSD block decomposition}
\end{equation}
for some $K\in\mathbb{C}^{L^2 \times L^2}$, where ${K}_{i}$ denotes the $i$'th $L\times L^2$ block of $K$ (the $L$ consecutive rows of ${K}$ starting from row number $(i-1)L +1$).
From \eqref{eq:Kron product PSD block decomposition} and~\eqref{eq:Kron product block formula}, we have that $H_{i,i} = K_i K_i^*$, which implies that the columns of ${K}_i$ are spanned by the eigenvectors of ${H}_{i,i}$. Following~\eqref{eq:H_ii rank bound}, we define ${V}^{(i)}\in\mathbb{C}^{L\times r^2}$ to be the matrix whose columns are the $r^2$ eigenvectors of ${H}_{i,i} $ which correspond to its largest eigenvalues. Then, we can write
\begin{equation}
{K}_i = {V}^{(i)} A_i, \label{eq:K_i_tilde expansion}
\end{equation}
where $A_i\in \mathbb{C}^{r^2 \times L^2}$ is a matrix of unknown coefficients. Now, using~\eqref{eq:K_i_tilde expansion},~\eqref{eq:Kron product PSD block decomposition}, and~\eqref{eq:Kron product block formula} we have that
\begin{equation}
{V}^{(i)} A_i A_j^* ({V}^{(j)})^* = {V}^{(i)} B_{i,j} ({V}^{(j)})^* = {H}_{i,j} \odot \operatorname{Circulant}\{[\beta_0^{(j-i)},\beta_1^{(j-i)},\ldots,\beta_{L-1}^{(j-i)}] \}, \label{eq:kron prod PSD system fixed i j}
\end{equation}
where we defined $A_i A_j^* = B_{i,j} \in \mathbb{C}^{r^2\times r^2}$ to be a matrix of $r^4$ unknown coefficients. Importantly, fixing $i$ and $j$,~\eqref{eq:kron prod PSD system fixed i j} describes a system of linear equations in the $r^4$ variables $\{ B_{i,j}\}_{i,j=1}^{r^2}$ and the $L$ variables $\beta_0^{(j-i)},\beta_1^{(j-i)},\ldots,\beta_{L-1}^{(j-i)} \in \mathbb{C}$, where we relaxed the requirement that $\beta_m^{(j-i)}$ have unit norm (we will see that this relaxation still enables us to obtain the correct angles $\widetilde{\varphi}_1,\ldots,\widetilde{\varphi}_{L-1}$). Recall that the matrices $H_{i,j}$ and $V^{(i)}$ are computed from the matrix $X$, which is provided to us (or estimated from the data, e.g. $\widetilde{C}_x$ from Section~\ref{section:step 1}). Hence, in total,~\eqref{eq:kron prod PSD system fixed i j} describes a linear system with $L^2$ equations in $r^4+L$ variables, among which the $L$ variables $\{\beta_m^{(j-i)}\}_{m=0}^{L-1}$ encode the required correcting angles $\widetilde{\varphi}_m$ from~\eqref{eq:X_tilde def}. Now, even though it is possible to exploit~\eqref{eq:kron prod PSD system fixed i j} directly to solve the problem at hand (identifying the phases $\varphi_1,\ldots,\varphi_{L-1}$), we proceed by forming an augmented linear system with more equations compared to the number of variables, which ultimately allows to recover $\hat{\Sigma}_x$ for larger ranks $r$. To this end, we couple together all systems of equations from~\eqref{eq:kron prod PSD system fixed i j} for all $i,j$ such that $j-i=1$, noting that $\beta_m^{(j-i)}=\beta_m^{(1)}$ are shared by all such systems. We then obtain the set of equations
\begin{equation}
{V}^{(i)} B_{i,i+1} ({V}^{(i+1)})^* = {H}_{i,i+1} \odot \operatorname{Circulant}\{[\beta_0^{(1)},\beta_1^{(1)},\ldots,\beta_{L-1}^{(1)}] \}, \qquad i=0,\ldots,L-1,  \label{eq:kron prod PSD systems j=i+1}
\end{equation}
which is a system of $L^3$ equations in $L (r^4 + 1)$ variables. Continuing, we can write the linear system of~\eqref{eq:kron prod PSD systems j=i+1} in standard matrix notation as
\begin{equation}
{W} {b} = \mathbf{0}, \label{eq:A beta = 0}
\end{equation} 
where $\mathbf{0}$ is a column vector of $L^3$ zeros, ${b}\in\mathbb{C}^{L+r^4L}$ is a column vector of variables formed by stacking $[\beta_0^{(1)},\beta_1^{(1)},\ldots,\beta_{L-1}^{(1)}]^T$ on top of all of the elements in $\{B_{i,i+1}\}_{i=0}^{L-1}$, and the matrix ${W}\in\mathbb{C}^{L^3 \times (L+r^4 L)}$ is constructed from~\eqref{eq:kron prod PSD systems j=i+1} as follows. Let ${Z}^{(i)}\in\mathbb{C}^{L^2\times r^4}$ and ${M}^{(i)}_m \in \mathbb{C}^{L^2}$, for $i,m\in \{0,\ldots,L-1 \}$, be given by
\begin{align} 
{Z}^{(i)} &= \overline{{V}^{(i+1)}} \otimes {V}^{(i)}, \label{eq:Z_tilde def} \\
{M}^{(i)}_m &= \operatorname{vec}\left\{{H}_{i,i+1}\right\} \odot \operatorname{vec}\left\{\operatorname{Circulant}\{\mathbf{e}_m\} \right\},\label{eq:M_tilde def}
\end{align}
where $\otimes$ is the Kronecker product, $\mathbf{e}_m$ is the $m$'th indicator vector (with a single value of $1$ at the $m$'th entry), $\operatorname{vec}\{\cdot\}$ is the operation of vectorizing a matrix by stacking its columns on top of one another (with the leftmost column being at the top of the resulting vector), and recall that $V^{(i)}$ is the $L\times r^2$ matrix whose colums are the first $r^2$ eigenvectors of $H_{i,i}$ (corresponding to its largest eigenvalues).
Then, ${W}$ is given by 
\begin{equation} \label{eq:A_mathcal_tilde def}
{W} = 
\begin{bmatrix}
\begin{pmatrix}
 {M}^{(0)}_0  & \hdots & {M}^{(0)}_{L-1}  \\ 
\vdots & \ddots & \vdots \\
{M}^{(L-1)}_0 & \hdots & {M}^{(L-1)}_{L-1} 
\end{pmatrix} & 
\begin{pmatrix}
-\operatorname{BlockDiag}\left\{ {{Z}}^{(0)},\ldots,{{Z}}^{(L-1)}\right\}
\end{pmatrix}
\end{bmatrix},
\end{equation}
where $\operatorname{BlockDiag}\{{{Z}}^{(0)},\ldots,{{Z}}^{(L-1)} \}$ stands for a block-diagonal matrix constructed from the matrices ${{Z}}^{(0)},\ldots,{{Z}}^{(L-1)}$, namely a matrix of size $L^3 \times r^4 L$ with $L$ non-zero blocks along its main diagonal, each of size $L^2\times r^4$. Figure~\ref{fig:A structure} depicts the structure of a typical matrix ${W}$.

\begin{figure}
  \centering
  	\subfloat[${W}$]  	 
  	{
    \includegraphics[width=0.5\textwidth]{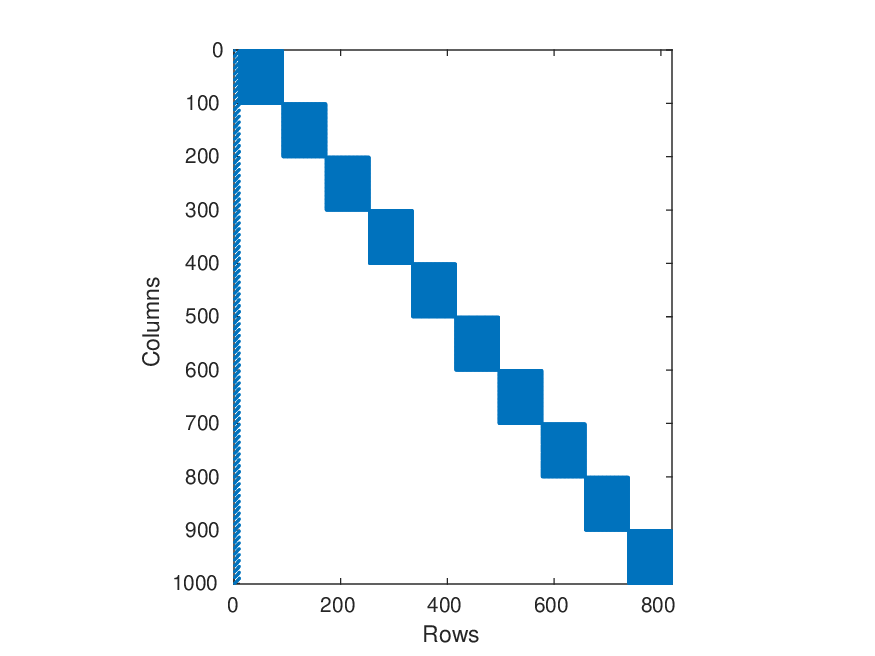} \label{fig:A structure}
    }
    \subfloat[${W}^* {W}$]  
    { 
    \includegraphics[width=0.5\textwidth]{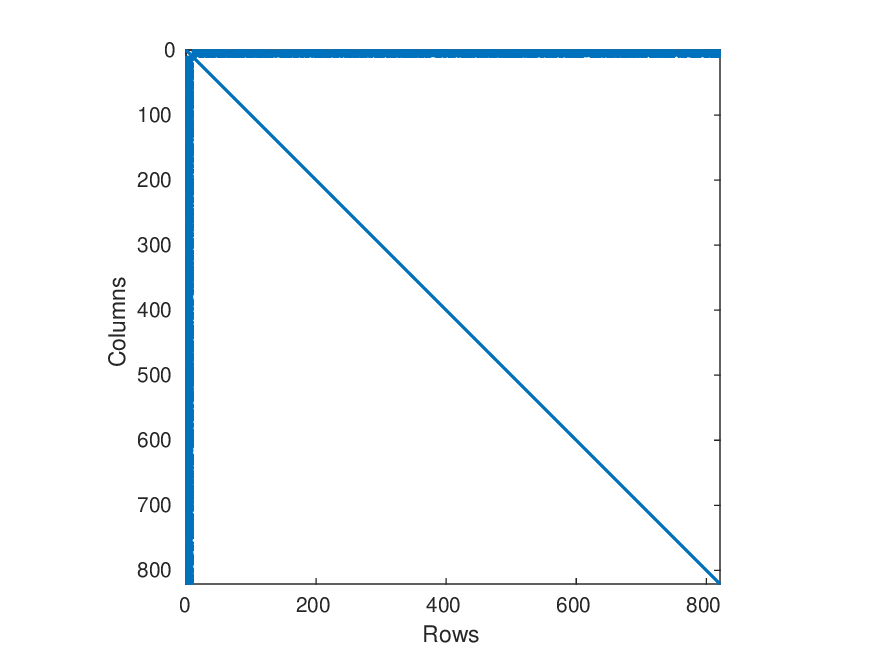} \label{fig:AA structure}
    }
    \caption
    {Nonzero elements of the matrices ${W}$ (left) and ${W}^* {W}$ (right), for $L = 10$ and $r = 3$. The matrix ${W}$ is of size $L^3 \times (L+r^4L)$, with $L$ nonzero columns on the left, followed by a block-diagonal matrix with $L$ blocks, each of size $L^2\times r^4$. The matrix ${W}^* {W}$ is considerably sparser than ${W}$, since all $r^4 L$ rightmost columns of ${W}$ are orthonormal (see Remark~\ref{remark:efficient evaluation of V_mathcal_tilde}). Hence, the nonzero entries of ${W}^* {W}$ include only the top $L$ rows, the $L$ leftmost columns, and the main diagonal. Consequently, ${W}^* {W}$ is much better suited for solving~\eqref{eq:A beta = 0}.}  \label{fig:A_mathcal_tilde structure}
    \end{figure}
    
Next, note that $b = \mathbf{0}$ is a possible solution to~\eqref{eq:A beta = 0}, where $\mathbf{0}$ is the column vector of $L+r^4L$ zeros. However, we know that there must exist at least one additional nonzero solution corresponding to the true phase ambiguities ($\beta_m^{(1)} = e^{\imath (\varphi_m - \varphi_{m-1})}$ is one such solution). Therefore, the linear system of~\eqref{eq:A beta = 0} must admit an infinite number of solutions. This implies that if the system in~\eqref{eq:A beta = 0} is not underdetermined (as we enforce next), then the smallest singular value of ${W}$ must be zero.
Note that the system in~\eqref{eq:A beta = 0} is not underdetermined if we require that $L+r^4L \leq L^3$, which is equivalent to requiring $r<\sqrt{L}$ (since $r$ and $L$ are integers).
In order to proceed, we need the following condition.
\begin{cond} \label{cond:low-rank recovery}
The second-smallest singular value of $W$ is strictly positive.
\end{cond}
Condition~\ref{cond:low-rank recovery} can be easily tested by computing the singular-value decomposition (SVD) of $W$ constructed from $\hat{\Sigma}_x$. Figure~\ref{fig:A singular vals} depicts the six smallest singular values of ${W}$ when constructed from a covariance matrix $\hat{\Sigma}_x$ with $L=10$, eigenvalues $[1,0.7,0.5]$, and eigenvectors randomly sampled from the unit sphere (with uniform distribution).
\begin{figure}
  \centering
  	\subfloat[Smallest singular values of $W$]  	 
  	{
    \includegraphics[width=0.45\textwidth]{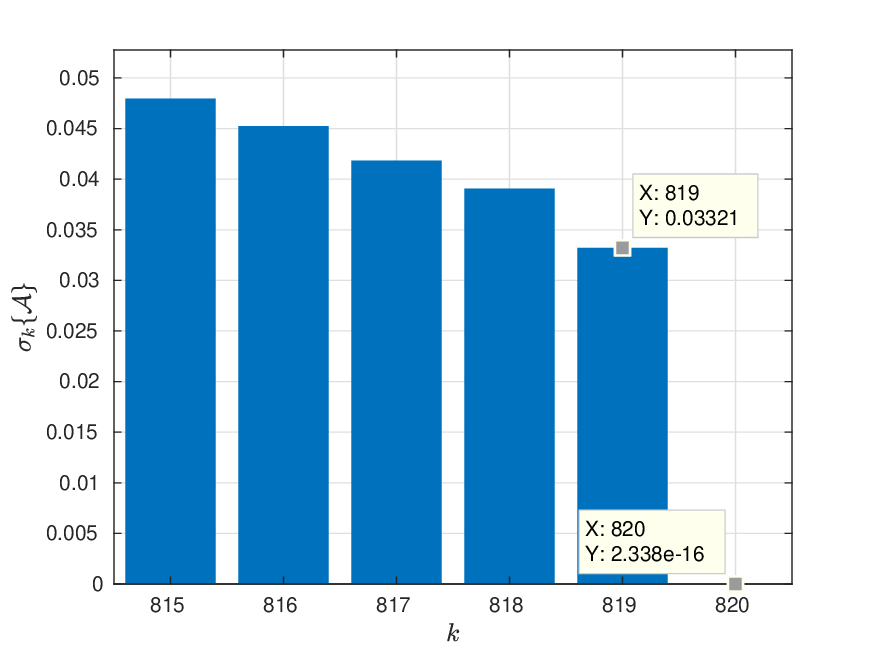} \label{fig:A singular vals}
    } 
    \subfloat[Smallest singular values of $\widetilde{W}$]  
    { 
    \includegraphics[width=0.45\textwidth]{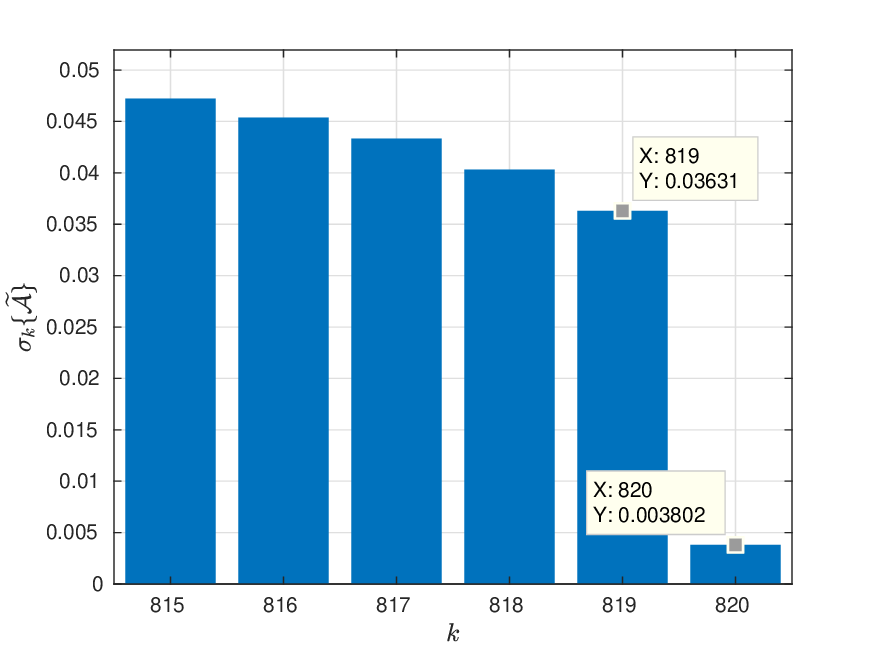} \label{fig:A_tilde singular vals}
    }
    \caption
	{The six smallest singular values of $W$ (left) and $\widetilde{W}$ (right) for $L=10$, $r=3$, $N=10^4$, and $\sigma^2 = 0.05$. The covariance matrix $\hat{\Sigma}_x$ was simulated with eigenvalues $[1,0.7,0.5]$, and eigenvectors sampled uniformly from the (complex) sphere. As expected, the smallest singular value of $W$ is zero, and it is evident that Condition~\ref{cond:low-rank recovery} holds. Moreover, even in the finite sample case where $N=10^4$ and $\sigma^2=0.05$, the smallest singular value of $\widetilde{W}$ is well-separated from the other singular values.} \label{fig:A and A_tilde singular values}
\end{figure}
Now, assuming that $r<\sqrt{L}$ and Condition~\ref{cond:low-rank recovery} holds, then the solution to~\eqref{eq:A beta = 0} is the span of the right singular vector of ${W}$ corresponding to its smallest singular value. 
Denoting this singular vector by ${\mathcal{V}}\in\mathbb{C}^{L+r^4L}$, we have that
\begin{equation}
b = c {\mathcal{V}}, \label{eq:beta solution}
\end{equation}
for any complex constant $c$. At this point, we briefly mention that a naive evaluation of $\mathcal{V}$ can be computationally challenging. In this regard, Remark~\ref{remark:efficient evaluation of V_mathcal_tilde} below outlines an efficient approach, which utilizes $W^* W$ instead of $W$ to evaluate $\mathcal{V}$.
Continuing with our derivation, from~\eqref{eq:beta def},~\eqref{eq:A beta = 0}, and~\eqref{eq:beta solution} it follows that 
\begin{equation}
c {\mathcal{V}}[m] = e^{-\imath (\widetilde{\varphi}_m - \widetilde{\varphi}_{m-1})},  \qquad m=0,\ldots,L-1. \label{eq:V_mathcal_tilde expression}
\end{equation}
Hence, the magnitudes of the first $L$ elements of ${\mathcal{V}}$ should be constant, and their phases should satisfy 
\begin{equation}
\operatorname{arg}\left\{ {\mathcal{V}}[m] \right\} = \operatorname{mod}(-\widetilde{\varphi}_m + \widetilde{\varphi}_{m-1} + \alpha, 2\pi), \qquad m=0,\ldots,L-1, \label{eq:V_mathcal_tilde arg expression}
\end{equation}
where $\operatorname{arg}\{\cdot\}$ is the argument of a complex number ($\varphi = \operatorname{arg}\{e^{\imath \varphi}\}$), and $\alpha\in [0,2\pi)$ is an unknown angle ($\alpha = -\operatorname{arg}\{c\}$). 
Figure~\ref{fig:magnitudes of V_tilde} illustrates the magnitudes of the first $30$ elements of ${\mathcal{V}}$, for the same matrix $\hat{\Sigma}_x$ as used in Figure~\ref{fig:A and A_tilde singular values}, exemplifying the agreement with~\eqref{eq:V_mathcal_tilde expression}. 

\begin{figure}
  \centering  	
  \subfloat[Magnitudes of $\mathcal{V}$]  	 
  	{
    \includegraphics[width=0.45\textwidth]{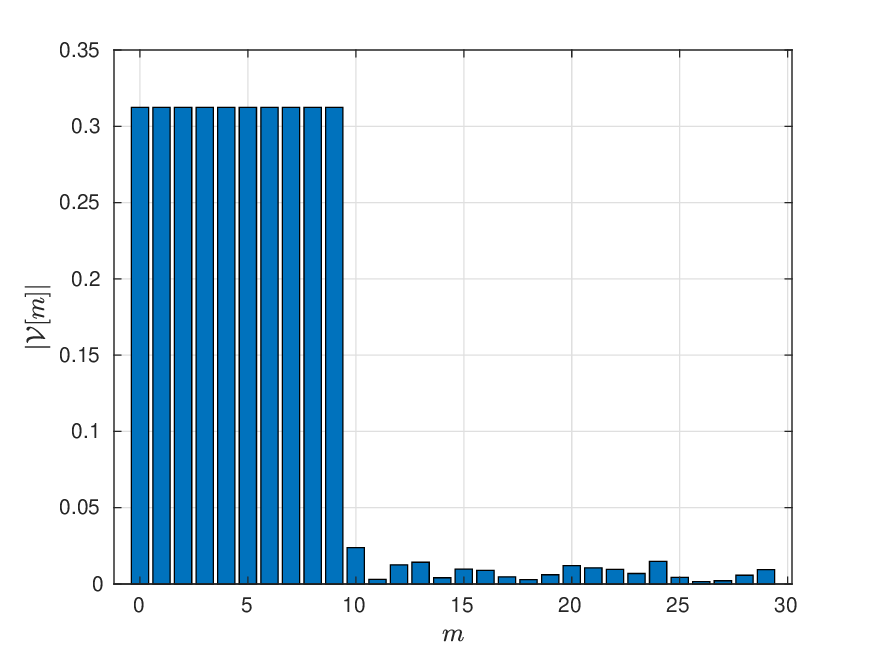} \label{fig:magnitudes of V}
    } 
    \subfloat[Magnitudes of $\widetilde{\mathcal{V}}$]  	 
  	{
    \includegraphics[width=0.45\textwidth]{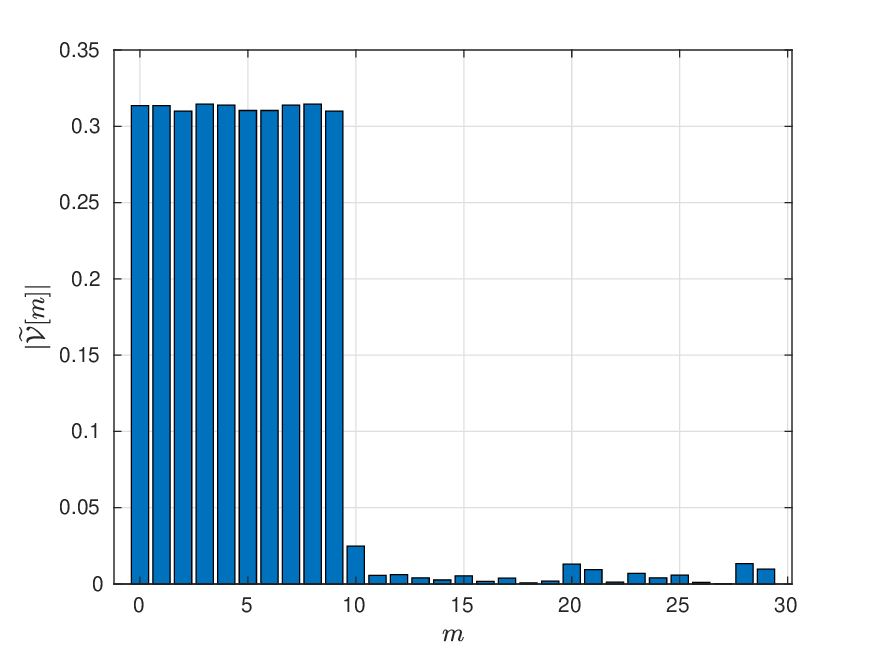} \label{fig:magnitudes of V_tilde}
    }     
	\caption[Magnitudes of $\mathcal{V}$ and $\widetilde{\mathcal{V}}$] 
	{Absolute values of the first $30$ elements of ${\mathcal{V}}$ (left) and $\widetilde{\mathcal{V}}$ (right) using the same $\hat{\Sigma}_x$ as in Figure~\ref{fig:A and A_tilde singular values}, where $r=3$, $L=10$,  $N=10^4$, and $\sigma^2=0.05$. It is evident that the first $L$ elements of ${\mathcal{V}}$ (which encode the phase differences $\widetilde{\varphi}_m - \widetilde{\varphi}_{m-1}$) have constant magnitudes, in agreement with~\eqref{eq:V_mathcal_tilde expression}. Moreover, this also holds approximately for $\widetilde{\mathcal{V}}$, with small fluctuations due to noise and finite sample size.}  \label{fig:magnitudes of V and V_tilde}
\end{figure} 
Next, note that the set of phase differences $\widetilde{\varphi}_m - \widetilde{\varphi}_{m-1}$ for $m=0,\ldots,L-1$ (recalling that $\widetilde{\varphi}_{-1} = \widetilde{\varphi}_{L-1}$), satisfies
\begin{equation}
\sum_{m=0}^{L-1} (\widetilde{\varphi}_m - \widetilde{\varphi}_{m-1}) = 0.
\end{equation}
Therefore, taking the sum over $m=0,\ldots,L-1$ on both sides in~\eqref{eq:V_mathcal_tilde arg expression} yields
\begin{equation}
\sum_{m=0}^{L-1}\operatorname{arg}\left\{ {\mathcal{V}}[m] \right\} = \operatorname{mod}(L \alpha, 2\pi),
\end{equation}
asserting that $\alpha$ must satisfy
\begin{equation}
\alpha = \frac{1}{L}\sum_{\ell=0}^{L-1} \operatorname{arg}\left\{ {\mathcal{V}}[\ell] \right\} + \frac{2\pi k}{L}, \label{eq:alpha expression}
\end{equation}
for some $k\in\{0,\ldots,L-1\}$.
From~\eqref{eq:V_mathcal_tilde arg expression},~\eqref{eq:alpha expression}, and taking $\widetilde{\varphi}_0 = 0$ (in accordance with~\eqref{eq:X_tilde def}), we arrive at
\begin{equation}
\widetilde{\varphi}_m = - \sum_{\ell=1}^m \operatorname{arg}\left\{ {\mathcal{V}}[\ell] \right\} + \frac{m}{L}\sum_{\ell=0}^{L-1} \operatorname{arg}\left\{ {\mathcal{V}}[\ell] \right\} + \frac{2\pi k m}{L}, \label{eq:step 2 angle estimation procedure}
\end{equation}
for $m=1,\ldots,L-1$ and some $k\in\{0,\ldots,L-1\}$ (where $k$ is fixed for all values of $m$), which determines every angle $\widetilde{\varphi}_{m}$ completely up to an additive ambiguity of $2\pi k m/L$ for some $k\in \{ 0,\ldots, L-1\}$ . 
Reviewing the derivation thus far,~\eqref{eq:step 2 angle estimation procedure} is a necessary condition for $\widetilde{X}$ to be Hermitian and PSD (since we derived it from the assumption that $\widetilde{X}$ is Hermitian and PSD). We summarize the derivation up to this point in the following proposition. 
\begin{prop} \label{prop:low-rank procedure necessary condition}
Suppose that $r<\sqrt{L}$ and Condition~\ref{cond:low-rank recovery} holds. If $\widetilde{X}$ of~\eqref{eq:X_tilde def} is Hermitian and PSD, then the angles $\widetilde{\varphi}_1\ldots,\widetilde{\varphi}_{L-1}$ must follow~\eqref{eq:step 2 angle estimation procedure} for some fixed $k\in \{0,\ldots, L-1\}$.
\end{prop}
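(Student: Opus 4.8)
The plan is to assemble the derivation preceding the statement into a single logical chain, carefully tracking where each hypothesis enters. I would begin from the assumption that $\widetilde{X}$ of~\eqref{eq:X_tilde def} is Hermitian and PSD, and invoke Lemma~\ref{lem:S properties} to conclude that the $L^2\times L^2$ block matrix $S$ of~\eqref{eq:Kron product block formula} is likewise Hermitian and PSD. Taking a PSD factorization $S = KK^*$ then yields $S^{(i,j)} = K_i K_j^*$ for every pair of blocks, and in particular $H_{i,i} = K_i K_i^*$.

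The first substantive step is the reduction to a low-dimensional coefficient description. Since Lemma~\ref{lem:H_ii properties} gives $\operatorname{Rank}\{H_{i,i}\}\leq r^2$ and $H_{i,i}=K_iK_i^*$, the range of $K_i$ coincides with the range of $H_{i,i}$, which lies in the span of the top $r^2$ eigenvectors collected in $V^{(i)}$; hence $K_i = V^{(i)}A_i$ as in~\eqref{eq:K_i_tilde expansion}. Substituting into $S^{(i,j)} = K_iK_j^*$ and matching against~\eqref{eq:Kron product block formula} produces the bilinear relations~\eqref{eq:kron prod PSD system fixed i j}; restricting to $j=i+1$ and stacking over $i$ collapses these into the homogeneous linear system $\mathcal{A}b = \mathbf{0}$ of~\eqref{eq:A beta = 0}, whose first $L$ coordinates encode the sought phase differences through~\eqref{eq:beta def}.

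The crux is the uniqueness of the solution. The constraint $r<\sqrt{L}$ guarantees $L + r^4 L \leq L^3$, so $\mathcal{A}$ has at least as many rows as columns and the system is not underdetermined. Because both $b=\mathbf{0}$ and the genuine phases $\beta_m^{(1)} = e^{\imath(\varphi_m - \varphi_{m-1})}$ solve it, the kernel of $\mathcal{A}$ is nontrivial, forcing its smallest singular value to vanish; Condition~\ref{cond:low-rank recovery} then forces the kernel to be exactly one-dimensional, spanned by the right singular vector $\mathcal{V}$. Consequently $b = c\mathcal{V}$ for a scalar $c$, which via~\eqref{eq:beta def} gives $c\,\mathcal{V}[m] = e^{-\imath(\widetilde{\varphi}_m - \widetilde{\varphi}_{m-1})}$, i.e.~\eqref{eq:V_mathcal_tilde expression}.

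The remaining step is routine phase bookkeeping: taking $\operatorname{arg}\{\cdot\}$ in~\eqref{eq:V_mathcal_tilde expression} recovers each difference $\widetilde{\varphi}_m - \widetilde{\varphi}_{m-1}$ up to the common additive constant $\alpha = -\operatorname{arg}\{c\}$, the telescoping identity $\sum_m (\widetilde{\varphi}_m - \widetilde{\varphi}_{m-1}) = 0$ pins $L\alpha$ modulo $2\pi$ and hence $\alpha$ up to $2\pi k/L$, and normalizing $\widetilde{\varphi}_0 = 0$ delivers~\eqref{eq:step 2 angle estimation procedure}. I expect the main obstacle to be the uniqueness argument of the third paragraph: one must confirm that Condition~\ref{cond:low-rank recovery} together with $r<\sqrt{L}$ genuinely collapses the solution set of $\mathcal{A}b=\mathbf{0}$ to a single line, so that the true phase differences are recovered (up to the unavoidable $2\pi k/L$ ambiguity and the global scaling $c$) rather than being merely one feasible point among many. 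The preceding reduction steps, by contrast, are structural consequences of the PSD factorization together with the rank bound of Lemma~\ref{lem:H_ii properties}.
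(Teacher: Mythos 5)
Your proposal reconstructs exactly the derivation the paper uses: the paper states this proposition explicitly as a summary of the Section~\ref{section: step 2} argument (Lemma~\ref{lem:S properties} giving $S\succeq 0$, the factorization $S=KK^*$ with the rank bound of Lemma~\ref{lem:H_ii properties} yielding $K_i=V^{(i)}A_i$, the stacked system $\mathcal{A}b=\mathbf{0}$, the one-dimensional kernel under $r<\sqrt{L}$ and Condition~\ref{cond:low-rank recovery}, and the telescoping phase bookkeeping). The proposal is correct and follows essentially the same route, with all hypotheses entering where they should.
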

Although Proposition~\ref{prop:low-rank procedure necessary condition} alone does not guarantee that choosing $\widetilde{\varphi}_1\ldots,\widetilde{\varphi}_{L-1}$ according to~\eqref{eq:step 2 angle estimation procedure} for any particular $k$ leads to $\widetilde{X}$ which is Hermitian and PSD, nor that $\widetilde{X}\in\Omega(\hat{\Sigma}_x)$, those two facts actually follow when combining Proposition~\ref{prop:low-rank procedure necessary condition} with Proposition~\ref{prop:fundamental ambiguities}. We then get the following result. 
\begin{prop} \label{prop:low-rank procedure derivation summary}
Suppose that $r<\sqrt{L}$ and Condition~\ref{cond:low-rank recovery} holds. Then, $\widetilde{X}$ of the form of~\eqref{eq:X_tilde def} is Hermitian and PSD if and only if $\widetilde{X}\in\Omega(\hat{\Sigma}_x)$. Moreover, if the angles $\widetilde{\varphi}_1\ldots,\widetilde{\varphi}_{L-1}$ follow~\eqref{eq:step 2 angle estimation procedure} for any fixed $k\in \{0,\ldots, L-1\}$ then
\begin{equation}
\widetilde{\varphi}_m = \varphi_m + \frac{2\pi k^{'} m}{L}, \qquad m = 1,\ldots,L-1, \label{eq:varphi_tilde feasible solutions}
\end{equation}
for some $k^{'}\in \{0,\ldots, L-1\}$,
where $\varphi_1,\ldots,\varphi_{L-1}$ are from~\eqref{eq:X def}, and consequently $\widetilde{X}\in \Omega(\hat{\Sigma}_x)$.
\end{prop}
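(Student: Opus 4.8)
The plan is to recognize that both the one-parameter family of angles prescribed by Proposition~\ref{prop:low-rank procedure necessary condition} and the family of fundamental ambiguities from Proposition~\ref{prop:fundamental ambiguities} are cosets of one and the same finite cyclic group of phase vectors, and then to argue that these two cosets must coincide. First I would combine~\eqref{eq:X def} and~\eqref{eq:X_tilde def} to write $\widetilde{X} = \hat{\Sigma}_x \odot \operatorname{Circulant}\{[1, e^{\imath(\varphi_1 - \widetilde{\varphi}_1)}, \ldots, e^{\imath(\varphi_{L-1} - \widetilde{\varphi}_{L-1})}]\}$, and recall from~\eqref{eq:Omega elements with DFT vectors expression} that the $\ell$-th element of $\Omega(\hat{\Sigma}_x)$ equals $\hat{\Sigma}_x \odot \operatorname{Circulant}\{f_\ell^*\}$, whose $m$-th generating phase is $e^{\imath 2\pi m\ell/L}$. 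Hence $\widetilde{X}\in\Omega(\hat{\Sigma}_x)$ holds precisely when $\widetilde{\varphi}_m = \varphi_m - 2\pi m\ell/L \pmod{2\pi}$ for some $\ell\in\{0,\ldots,L-1\}$; reindexing $\ell\mapsto(L-k')\bmod L$, this is exactly the form~\eqref{eq:varphi_tilde feasible solutions}. Introducing the generating vector $g=(2\pi m/L)_{m=0}^{L-1}$, the set $A:=\{(\varphi_m + k'g_m)_m : k'=0,\ldots,L-1\}$ of fundamental-ambiguity phase vectors (taken modulo $2\pi$, componentwise) is then a coset of the cyclic group $\langle g\rangle=\{(kg_m)_m\}_{k=0}^{L-1}$.

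For the main step I would invoke Proposition~\ref{prop:low-rank procedure necessary condition}: any angles rendering $\widetilde{X}$ Hermitian and PSD lie in $C:=\{(\psi_m + kg_m)_m : k=0,\ldots,L-1\}$, where $\psi_m$ is the $k$-independent part of~\eqref{eq:step 2 angle estimation procedure}, so $C$ is also a coset of $\langle g\rangle$. On the other hand, Proposition~\ref{prop:fundamental ambiguities} guarantees that each of the $L$ fundamental ambiguities is Hermitian and PSD; since each corresponds to a $\widetilde{X}$ built from the \emph{same} underlying $X$ (and hence the same singular vector $\mathcal{V}$), each satisfies the necessary condition, giving $A\subseteq C$. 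I would then note that both $A$ and $C$ consist of exactly $L$ distinct phase vectors: two members differ by $(k-k')g$, whose $m=1$ entry $2\pi(k-k')/L$ vanishes modulo $2\pi$ only when $k\equiv k'\pmod L$. Thus $A\subseteq C$ with $|A|=|C|=L$ forces $A=C$.

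The three assertions then follow from $A=C$. The ``Moreover'' part is immediate: angles following~\eqref{eq:step 2 angle estimation procedure} lie in $C=A$ and therefore have the form~\eqref{eq:varphi_tilde feasible solutions}. For the equivalence, the reverse implication is just Proposition~\ref{prop:fundamental ambiguities} ($\widetilde{X}\in\Omega(\hat{\Sigma}_x)$ is Hermitian and PSD), while the forward implication chains Proposition~\ref{prop:low-rank procedure necessary condition} (Hermitian and PSD forces the phases into $C$), the identity $C=A$, and the membership characterization of the first step.

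The main obstacle is the second step, and specifically upgrading the inclusion $A\subseteq C$ to equality. This rests on two facts that I would verify carefully: that~\eqref{eq:step 2 angle estimation procedure} describes \emph{exactly one} coset of $\langle g\rangle$ (which requires Condition~\ref{cond:low-rank recovery}, since it makes the smallest singular value of $\mathcal{A}$ simple, so $\mathcal{V}$ is unique up to a unimodular scalar and the $\psi_m$ are well-defined --- one checks that a rescaling $\mathcal{V}\mapsto e^{\imath\theta}\mathcal{V}$ shifts the two $\arg$-sums in~\eqref{eq:step 2 angle estimation procedure} by $-m\theta$ and $+m\theta$, which cancel); and that the $L$ fundamental ambiguities are genuinely distinct as phase vectors, the point that lets a mere containment be promoted to a full identification of the solution set.
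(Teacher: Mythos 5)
Your proposal is correct and follows essentially the same route as the paper: the paper likewise observes that the necessary condition of Proposition~\ref{prop:low-rank procedure necessary condition} singles out $L$ distinct candidate angle vectors, that the $L$ distinct vectors of~\eqref{eq:varphi_tilde feasible solutions} all yield $\widetilde{X}\in\Omega(\hat{\Sigma}_x)$ (hence Hermitian and PSD, by Proposition~\ref{prop:fundamental ambiguities} and the identity $\hat{\Sigma}_x \odot \operatorname{Circulant}\{f_{k'}\} = \operatorname{diag}(f_{L-k'})\hat{\Sigma}_x\operatorname{diag}(f_{L-k'}^*)$), and concludes by the containment-plus-equal-cardinality argument that the two $L$-element sets coincide. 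Your coset-of-a-cyclic-group packaging and the explicit check that~\eqref{eq:step 2 angle estimation procedure} is invariant under rescaling $\mathcal{V}\mapsto e^{\imath\theta}\mathcal{V}$ are harmless refinements of the same argument.
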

\begin{proof}
Proposition~\ref{prop:low-rank procedure necessary condition} asserts that for $\widetilde{X}$ to be Hermitian and PSD, the vector of correcting angles $[\widetilde{\varphi}_1,\ldots,\widetilde{\varphi}_{L-1}]$ must be chosen from the $L$ different options corresponding to different $k\in \{0,\ldots,L-1\}$ in~\eqref{eq:step 2 angle estimation procedure} (the options are different since $\widetilde{\varphi}_1$ is clearly different for every $k\in\{0,\ldots,L-1\}$). On the other hand, taking $[\widetilde{\varphi}_1\ldots,\widetilde{\varphi}_{L-1}]$ according to~\eqref{eq:varphi_tilde feasible solutions} gives
\begin{align}
\widetilde{X} &= X \odot \operatorname{Circulant} \{ [1, e^{-\imath ({\varphi}_1 + 2\pi k^{'}/L)},\ldots,e^{-\imath ({\varphi}_{L-1} + 2\pi k^{'} (L-1)/L)}]\} \nonumber \\ 
&= \hat{\Sigma}_x \odot \operatorname{Circulant} \{ [1, e^{-\imath 2\pi k^{'}/L},\ldots,e^{-\imath 2\pi k^{'} (L-1)/L}]\} = \hat{\Sigma}_x \odot \operatorname{Circulant} \{ f_{k^{'}}\} \nonumber \\
&= \hat{\Sigma}_x \odot f_{k^{'}}^* f_{k^{'}} = \hat{\Sigma}_x \odot f_{L-k^{'}} f_{L-k^{'}}^* =  \operatorname{diag}(f_{L-k^{'}}) \cdot \hat{\Sigma}_x \cdot \operatorname{diag}(f_{L-k^{'}}^*) \in \Omega(\hat{\Sigma}_x),
\end{align}
asserting (via Proposition~\ref{prop:fundamental ambiguities}) that $\widetilde{X}$ is Hermitian and PSD, for every $k^{'}\in\{0,\ldots,L-1\}$. Therefore, there are $L$ different choices for  $[\widetilde{\varphi}_1\ldots,\widetilde{\varphi}_{L-1}]$, given explicitly by~\eqref{eq:varphi_tilde feasible solutions}, that result in $\widetilde{X}$ which is Hermitian and PSD. Consequently, choosing $[\widetilde{\varphi}_1,\ldots,\widetilde{\varphi}_{L-1}]$ according to~\eqref{eq:step 2 angle estimation procedure} must coincide with~\eqref{eq:varphi_tilde feasible solutions}, with some one-to-one mapping between the values of the indices $k$ and $k^{'}$, thus establishing all claims of Proposition~\ref{prop:low-rank procedure derivation summary}.
\end{proof}

Considering the finite-sample case, it is clear that we do not have access to $X$ (of~\eqref{eq:X def}) nor $\widetilde{X}$ (of~\eqref{eq:X_tilde def}). Therefore, we replace $X$ and $\widetilde{X}$ with $\widetilde{C}_x$ (of~\eqref{eq:C_x_tilde def}) and $\widetilde{\hat{\Sigma}}_x$ (of~\eqref{eq:covariance estimator with phase}), respectively, and denote by $\widetilde{(\cdot)}$ the corresponding finite-sample analogues of all quantities defined in this section. Figure~\ref{fig:A_tilde singular vals} depicts the behavior of the smallest singular values of $\widetilde{W}$ (the finite-sample analogue of $W$), and Figure~\ref{fig:magnitudes of V_tilde} illustrates the magnitudes of $\widetilde{\mathcal{V}}$ (the right singular vector of $\widetilde{W}$ corresponding to its smallest singular value).
The resulting procedure for resolving the diagonal phase ambiguities in the finite-sample case is detailed in Algorithm~\ref{alg:step 2 direct}. We mention that if $r$ is unknown, we take it to be the maximal rank allowed by Algorithm~\ref{alg:step 2 direct}, which is the largest $r$ such that $r < \sqrt{L}$ (for the system~\eqref{eq:A beta = 0} not to be underdetermined).

\begin{algorithm}
\caption{Circulant phase retrieval}\label{alg:step 2 direct}
\begin{algorithmic}[1]
\Statex{\textbf{Required:} A matrix $X\in\mathbb{C}^{L\times L}$ following or approximating~\eqref{eq:X def} (e.g. $\widetilde{C}_x$ from Algorithm~\ref{alg:step 1 alg}}). 
\State If the rank $r$ is unknown, set $r = \lceil \sqrt{L}-1 \rceil$. \label{step:rank set}
\State For $i=0,\ldots,L-1$ do
\begin{enumerate} [label=(\alph*)]
\item Form the $L\times L$ matrices ${H}_{i,i}$ and ${H}_{i,i+1}$ according to~\eqref{eq:H_ij_tilde def}.
\item Take the first $r^2$ eigenvectors of ${H}_{i,i}$ (corresponding to its $r^2$ largest eigenvalues in absolute value) and store them in the $L\times r^2$ matrix ${V}^{(i)}$. 
\end{enumerate}
\item Form the $L^3\times (L+r^4 L)$ matrix ${W}$ according to~\eqref{eq:Z_tilde def},~\eqref{eq:M_tilde def}, and~\eqref{eq:A_mathcal_tilde def}.
\item Evaluate ${\mathcal{V}}$, which is the right singular vector of ${W}$ corresponding to its smallest singular value. See Remark~\ref{remark:efficient evaluation of V_mathcal_tilde} for an efficient evaluation procedure.  
\item Take $\widetilde{\varphi}_0=0$ and compute $\widetilde{\varphi}_1,\ldots,\widetilde{\varphi}_{L-1}$  according to~\eqref{eq:step 2 angle estimation procedure}.
\item Output $\widetilde{\Sigma}_x = F^* \widetilde{\hat{\Sigma}}_x F$, where $\widetilde{\hat{\Sigma}}_x$ is given by~\eqref{eq:covariance estimator with phase} and $F$ is the DFT matrix of~\eqref{eq:DFT matrix and vectors def}.
\end{algorithmic}
\end{algorithm}

The following theorem establishes the consistency of the estimator $\widetilde{\Sigma}_x$ computed by Algorithm~\ref{alg:step 2 direct}.
\begin{thm}[Consistency of Algorithm~\ref{alg:step 2 direct}] \label{thm:consistency of step 2, direct approach}
Let $\widetilde{C}_x$ be the input to Algorithm~\ref{alg:step 2 direct} (replacing $X$). Suppose that $r<\sqrt{L}$, Condition~\ref{cond:low-rank recovery} holds, and that $H_{i,i}$ from~\eqref{eq:H_ii expression} has $r^2$ distinct non-zero eigenvalues for every $i\in\{0,\ldots,L-1\}$. Then,
\begin{align}
\min_{\ell\in \{0,\ldots,L-1\}}\left\Vert \widetilde{{\Sigma}}_x - R_{\ell,\ell}\left\{ {\Sigma}_x \right\} \right\Vert_F \underset{N\rightarrow\infty, \; \text{a.s.}}{\longrightarrow} 0, \label{eq:Sigma_x estimation consistency}
\end{align} 
where $\widetilde{{\Sigma}}_x$ is the output of Algorithm~\ref{alg:step 2 direct}.
\end{thm}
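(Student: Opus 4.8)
The plan is to prove~\eqref{eq:Sigma_x estimation consistency} by a perturbation (continuity) argument: Algorithm~\ref{alg:step 2 direct} is a composition of maps, each stable near the relevant limiting objects, and on an exact input of the form~\eqref{eq:X def} it returns an element of $\Omega(\hat{\Sigma}_x)$ by Proposition~\ref{prop:low-rank procedure derivation summary}. Combined with the input consistency of Theorem~\ref{thm:consistency of step 1}, this forces the output to converge almost surely to $\Omega(\hat{\Sigma}_x)$, which under the conjugation $F^*(\cdot)F$ is precisely the set $\{\mathcal{R}_{\ell,\ell}\{\Sigma_x\}\}_{\ell=0}^{L-1}$. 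First I would use Theorem~\ref{thm:consistency of step 1} to write, for each $N$, $\widetilde{C}_x = X^{(N)} + E_N$ with $\|E_N\|_F\to 0$ a.s., where $X^{(N)} = \hat{\Sigma}_x\odot\operatorname{Circulant}\{[1,e^{\imath\varphi_1^{(N)}},\ldots,e^{\imath\varphi_{L-1}^{(N)}}]\}$ is of the form~\eqref{eq:X def}. The phases $\varphi_m^{(N)}$ vary with $N$, but every $X^{(N)}$ lies in the compact set swept out as the phases range over $[0,2\pi)^{L-1}$; this compactness is what upgrades pointwise stability to uniform stability.

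Next I would track each step of Algorithm~\ref{alg:step 2 direct} with a modulus of continuity uniform over this family. The matrices $\widetilde{H}_{i,i},\widetilde{H}_{i,i+1}$ built from $\widetilde{C}_x$ via~\eqref{eq:H_ij_tilde def} are quadratic in the input, hence converge to those built from $X^{(N)}$. By Lemma~\ref{lem:H_ii properties} the limiting $H_{i,i}=\hat{\Sigma}_x\odot\overline{\mathcal{R}_{i,i}\{\hat{\Sigma}_x\}}$ does not depend on the phases at all, so it is a fixed matrix which by hypothesis has $r^2$ distinct nonzero eigenvalues. This yields a spectral gap separating the top-$r^2$ eigenspace from the kernel, so the principal subspace -- equivalently the column span of $V^{(i)}$ -- is recovered consistently by standard perturbation theory for simple eigenvalues. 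Since the downstream construction depends on $V^{(i)}$ only through this subspace (the coefficient matrices $B_{i,i+1}$ in~\eqref{eq:kron prod PSD system fixed i j} absorb any change of basis), the per-eigenvector unit-modulus phase ambiguities are immaterial.

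For the singular step I would first observe that the singular spectrum of $\mathcal{A}$ is phase-independent. A short computation using~\eqref{eq:H_ij_tilde def} gives $H_{i,i+1}=H_{i,i+1}^{(0)}\odot\operatorname{Circulant}\{[\gamma_0,\ldots,\gamma_{L-1}]\}$ with $\gamma_m=e^{\imath(\varphi_m-\varphi_{m-1})}$ and $H_{i,i+1}^{(0)}$ the matrix built from $\hat{\Sigma}_x$; hence each block column $M_m^{(i)}$ of~\eqref{eq:M_tilde def} is merely scaled by $\gamma_m$, so $\mathcal{A}=\mathcal{A}_0 D$ for a unitary diagonal $D$, and $\mathcal{A}^*\mathcal{A}$ is unitarily equivalent to its phase-free counterpart. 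Thus Condition~\ref{cond:low-rank recovery} (smallest singular value zero, second-smallest strictly positive) holds uniformly over $\{X^{(N)}\}$, giving a uniform gap that makes the smallest right singular vector $\mathcal{V}$ stable up to a global scalar. Since $|\mathcal{V}[m]|$ is constant and nonzero on its first $L$ coordinates by~\eqref{eq:V_mathcal_tilde expression}, the argument extraction and the formation of $\widetilde{\varphi}_m$ via~\eqref{eq:step 2 angle estimation procedure} and of $\widetilde{\hat{\Sigma}}_x$ via~\eqref{eq:covariance estimator with phase} are continuous near the limit, with the global phase of $\mathcal{V}$ dropping out through the normalization~\eqref{eq:alpha expression}. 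Chaining these steps, the output from $\widetilde{C}_x$ is within $o(1)$ a.s. of the output from the exact $X^{(N)}$, which by Proposition~\ref{prop:low-rank procedure derivation summary} is some element of the finite set $\Omega(\hat{\Sigma}_x)$; applying $F^*(\cdot)F$ together with~\eqref{eq:Omega elements with DFT vectors expression} turns this into a cyclic shift $\mathcal{R}_{\ell,\ell}\{\Sigma_x\}$, and taking the minimum over $\ell$ yields~\eqref{eq:Sigma_x estimation consistency}.

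The hard part will be the uniform stability: the limiting input $X^{(N)}$ itself drifts with $N$, so a naive continuity argument around a single fixed point does not suffice. The two structural observations that make this tractable -- that the limiting $H_{i,i}$ (and hence the subspaces $V^{(i)}$) are phase-independent, and that the singular spectrum of $\mathcal{A}$ is phase-independent -- reduce the moving-target perturbation problem to stability around fixed objects, after which classical eigenvector and singular-vector perturbation bounds apply. A secondary technical point to handle with care is the bookkeeping of the two distinct sources of phase ambiguity (the per-eigenvector phases in $V^{(i)}$ and the global phase of $\mathcal{V}$), verifying that both cancel in $\widetilde{\hat{\Sigma}}_x$.
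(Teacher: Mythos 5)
Your proposal follows essentially the same route as the paper's proof: decompose $\widetilde{C}_x = X^{(N)} + E^{(N)}$ via Theorem~\ref{thm:consistency of step 1}, propagate the vanishing error through $\widetilde{H}_{i,j}$, $\widetilde{V}^{(i)}$, $\widetilde{\mathcal{A}}$, and $\widetilde{\mathcal{V}}$ using Davis--Kahan at the eigenvector and singular-vector steps, verify that the per-eigenvector phases and the global phase of $\mathcal{V}$ cancel in the angle-extraction formula~\eqref{eq:step 2 angle estimation procedure}, and conclude with Proposition~\ref{prop:low-rank procedure derivation summary}. Your two structural observations --- that the limiting $H_{i,i}$ and the singular spectrum of $\mathcal{A}$ are phase-independent, so the spectral gaps are uniform over the drifting family $X^{(N)}$ --- are a clean way to make explicit a uniformity point that the paper handles only implicitly by tracking the $N$-dependent phases through the Kronecker products.
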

The proof of Theorem~\ref{thm:consistency of step 2, direct approach} is provided in Appendix~\ref{appendix:Proof of consistency of step 2}.

\begin{remark}[Efficient evaluation of $\mathcal{V}$ and $\widetilde{\mathcal{V}}$] \label{remark:efficient evaluation of V_mathcal_tilde}
In general, ${W}$ is of size $ L^3 \times \mathcal{O}(L^3)$ (assuming $r$ is unknown), and hence the singular-value decomposition (SVD) of ${W}$ becomes computationally prohibitive even for moderate values of $L$. Therefore, it is essential to compute ${\mathcal{V}}$ without the full SVD of $W$, while exploiting the special structure of ${W}$. In particular, note that since the columns of $V^{(i)}$ (the $r^2$ eigenvectors of $H_{i,i}$) are orthonormal, then also the columns of ${Z}^{(i)} = \overline{V^{(i+1)}} \otimes V^{(i)}$ (see~\eqref{eq:Z_tilde def}) are orthonormal due to the definition of the Kronecker product. Consequently, the $r^4 L$ rightmost columns of ${W}$ are orthonormal due to the block-diagonal structure in~\eqref{eq:A_mathcal_tilde def}. It then follows that the matrix ${W}^* {W}$ is much sparser than ${W}$, see Figure~\ref{fig:AA structure}, as it includes only $\mathcal{O}(L^4)$ nonzero elements (compared to $\mathcal{O}(L^5)$ in ${W}$). As ${\mathcal{V}}$ is also the eigenvector of ${W}^* {W}$ corresponding to its smallest eigenvalue, ${\mathcal{V}}$ can be computed efficiently by the inverse power method using the conjugate-gradients algorithm for inverting ${W}^* {W}$ at each iteration. The above discussion applies equivalently to the evaluation of $\widetilde{\mathcal{V}}$ from $\widetilde{W}^* \widetilde{W}$.
\end{remark}

\section{Numerical experiments} \label{section:numerical examples}
Next, we report our experimental findings on the recovery of $\Sigma_x$ from the measurements $y_1,\ldots,y_N$, using Algorithm~\ref{alg:step 1 alg} followed by Algorithm~\ref{alg:step 2 direct}. 
We use the following measure of discrepancy to evaluate the estimation error:
\begin{equation}
\text{Error} := \frac{\underset{\ell\in\{0,\ldots,L-1\}}{\min} \left\Vert \widetilde{\Sigma}_x - R_{\ell,\ell}\{ \Sigma_x \} \right\Vert_F^2}{\left\Vert  \Sigma_x \right\Vert_F^2}, \label{eq:error def}
\end{equation}
where $\widetilde{\Sigma}_x$ is the estimator of $\Sigma_x$ obtained from the output of Algorithm~\ref{alg:step 2 direct} while treating the rank $r$ as unknown (setting $r=\lceil \sqrt{L} - 1\rceil$ per step~\ref{step:rank set} in Algorithm~\ref{alg:step 2 direct}). In all our experiments, we generate the covariance matrix $\Sigma_x$ randomly as follows. We sample the eigenvalues $\lambda_1,\ldots,\lambda_{r}$ of $\Sigma_x$ uniformly from $[0,1]$, and normalize them such that $\sum_{i=1}^r \lambda_i = 1$, essentially enforcing a fixed signal power of $1$ (i.e. $\mathbb{E}\Vert x\Vert^2 = 1$). The eigenvectors $v_1,\ldots,v_r$ of $\Sigma_x$ are sampled uniformly from the $(L-1)$-sphere. After generating the covariance matrix $\Sigma_x$, the observations $y_1,\ldots,y_N$ are drawn according to~\eqref{eq:MRFA model def} and~\eqref{eq:x model} using a uniform distribution for the cyclic shifts~$s$. 

We begin with several experiments testing the performance of our algorithms in the complex-valued case ($v_i\in \mathbb{C}^L$, $a_i \sim \mathbb{C}\mathcal{N}(0,\lambda_i)$ and $\eta \sim \mathbb{C}\mathcal{N}(0,\sigma^2 I_L)$). We first explore the ability of our algorithms to recover $\Sigma_x$ for different ranks $r$ and signal lengths $L$. For these experiments, we use $N=10^5$, and consider the maximal error among $200$ trials. The results are shown in Figure~\ref{fig:low-rank recovery r vs L}. As supported by Theorem~\ref{thm:consistency of step 2, direct approach}, small estimation errors are always achieved when $r<\sqrt{L}$. However, since the algorithm cannot handle the case of $r\geq \sqrt{L}$ (the linear system in~\eqref{eq:A beta = 0} becomes underdetermined), the worst-case estimation errors rapidly increase with $r$ for $r\geq \sqrt{L}$.

Continuing, we investigate the behavior of the estimation error as a function of the number of observations $N$. In this experiment, the error is averaged over $200$ trials. Figure~\ref{fig:E vs N complex} displays the estimation error of Algorithm~\ref{alg:step 2 direct} as a function of $N$, for $L=26$, $r\in\{2,5\}$ and $\sigma^2 \in \{0,0.01,0.05\}$. As expected from Theorem~\ref{thm:consistency of step 2, direct approach}, the error decreases with $N$ for all fixed values of $r$ and $\sigma^2$. In this regard, the empirical results suggest that the error is proportional to $1/N$. Furthermore, it is evident that the existence of noise simply shifts the error curves to the right, such that more observations (by a constant factor) are required to achieve the same estimation error as without noise. Note that in this example, $\sigma^2 = 0.05$ corresponds to a noise magnitude (given by $\sigma^2 L$) approximately equal to the signal's strength (normalized to be $1$). Even in this challenging regime, an accurate estimation of $\Sigma_x$ is achieved when $N\approx 10^5$, implying that our method can successfully cope with high levels of noise.
Also, as evident from Figure~\ref{fig:E vs N complex}, the estimation error grows with $r$. This is due to the fact that the fourth moment of $\hat{y}$ (and the trispectrum in particular) is harder to estimate for larger ranks, since the variability in the quantities $\hat{y}[k_1]\overline{\hat{y}[k_2]}\hat{y}[k_3]\overline{\hat{y}[k_4]}$ increases with $r$. 

\begin{figure}
  \centering  	
    \includegraphics[width=0.7\textwidth]{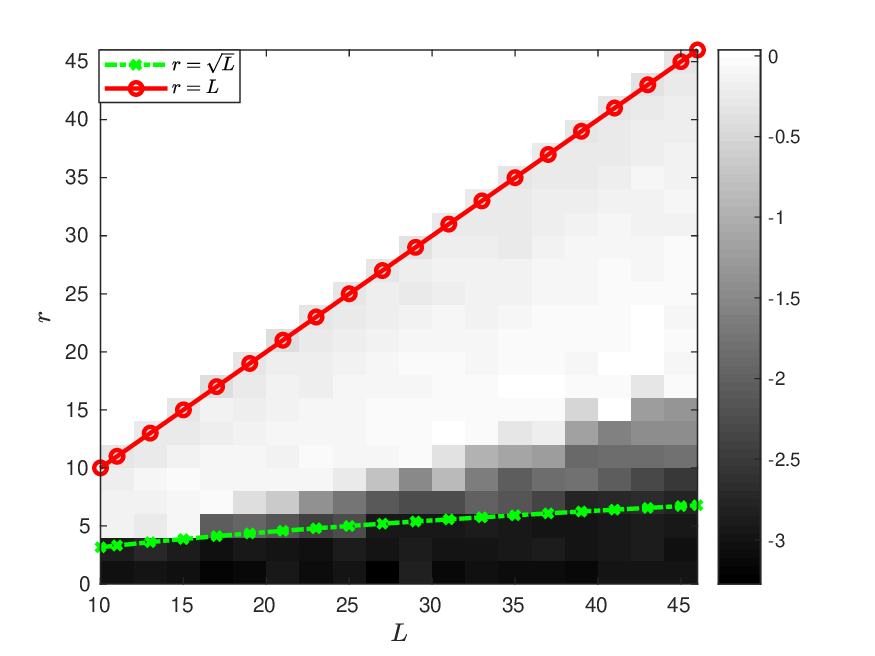}
	\caption[Algorithm~\ref{alg:step 2 direct} r vs L] 
	{Estimation error of our algorithms (Algorithm~\ref{alg:step 1 alg} followed by Algorithm~\ref{alg:step 2 direct}) in the complex-valued case, using $N=10^5$. For every $r$ and $L$ we display the maximal $\log_{10}\left\{\text{Error}\right\}$ over $200$ trials.}  \label{fig:low-rank recovery r vs L}
\end{figure}

\begin{figure}
  \centering
  	{
    \includegraphics[width=0.7\textwidth]{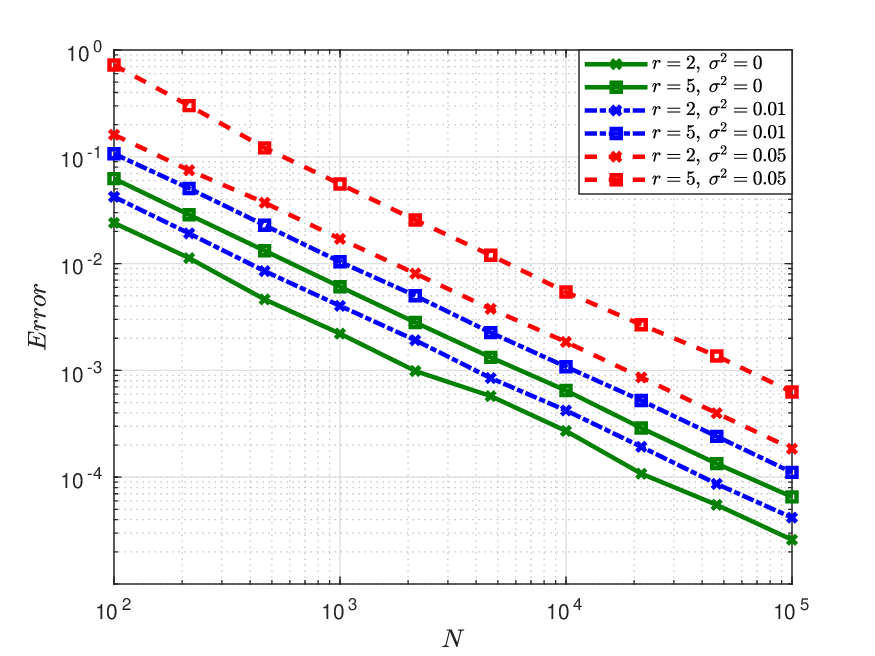} 
    }    
	\caption[L vs r] 
	{Estimation error of our algorithms (Algorithm~\ref{alg:step 1 alg} followed by Algorithm~\ref{alg:step 2 direct}) in the complex-valued case as a function of $N$, for $L=26$, and several ranks and noise levels. The errors were averaged over $200$ trials.	}  \label{fig:E vs N complex}
\end{figure}

Last, we demonstrate the performance of our algorithms for the real-valued case ($v_i\in \mathbb{R}^L$, $a_i \sim \mathcal{N}(0,\lambda_i)$ and $\eta \sim \mathcal{N}(0,\sigma^2 I_L)$), and we note that the difference in our algorithms between the real-valued and the complex-valued cases lies only in step~\ref{alg 1:solve opt prob} of Algorithm~\ref{alg:step 1 alg} (as solving~\eqref{eq:optim for diagonals} in the complex-valued case is replaced by solving~\eqref{eq:optim for diagonals real-valued case} in the real-valued case). The results are shown in Figures~\ref{fig:low-rank recovery r vs L real} and~\ref{fig:E vs N real}, which are analogous to Figures~\ref{fig:low-rank recovery r vs L} and~\ref{fig:E vs N complex} in the complex-valued case, respectively. It is evident that the performance of our algorithms in the real-valued case is very similar to that of the complex-valued case, with almost identical behavior in all aspects, albeit slightly larger estimation errors.

\begin{figure}
  \centering  	
    \includegraphics[width=0.7\textwidth]{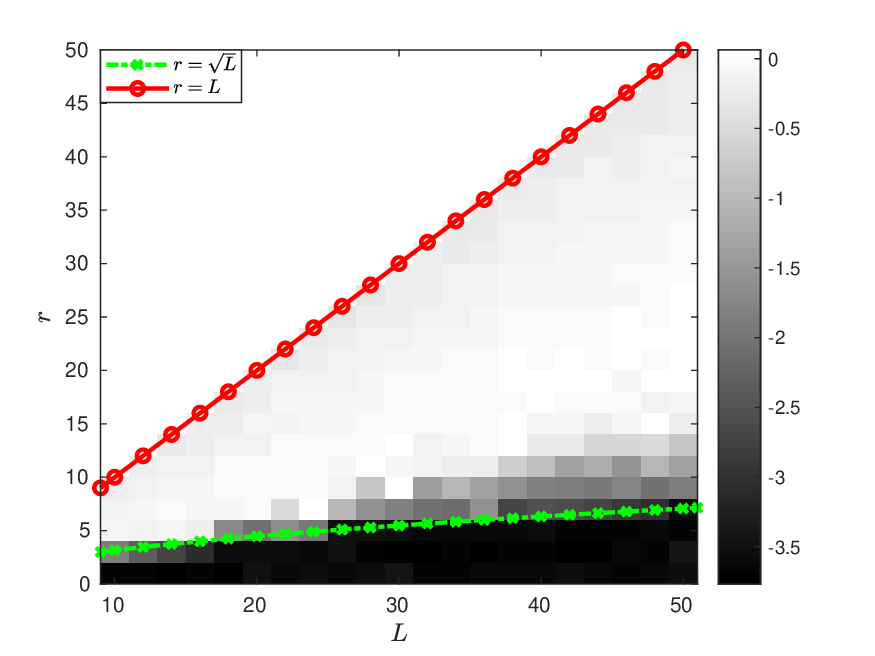}
	\caption[Algorithm~\ref{alg:step 2 direct} r vs L] 
	{Estimation error of our algorithms (Algorithm~\ref{alg:step 1 alg} followed by Algorithm~\ref{alg:step 2 direct}) in the real-valued case, using $N=10^5$. For every $r$ and $L$ we display the maximal $\log_{10}\left\{\text{Error}\right\}$ over $200$ trials.}  \label{fig:low-rank recovery r vs L real}
\end{figure}

\begin{figure}
  \centering
  	{
    \includegraphics[width=0.7\textwidth]{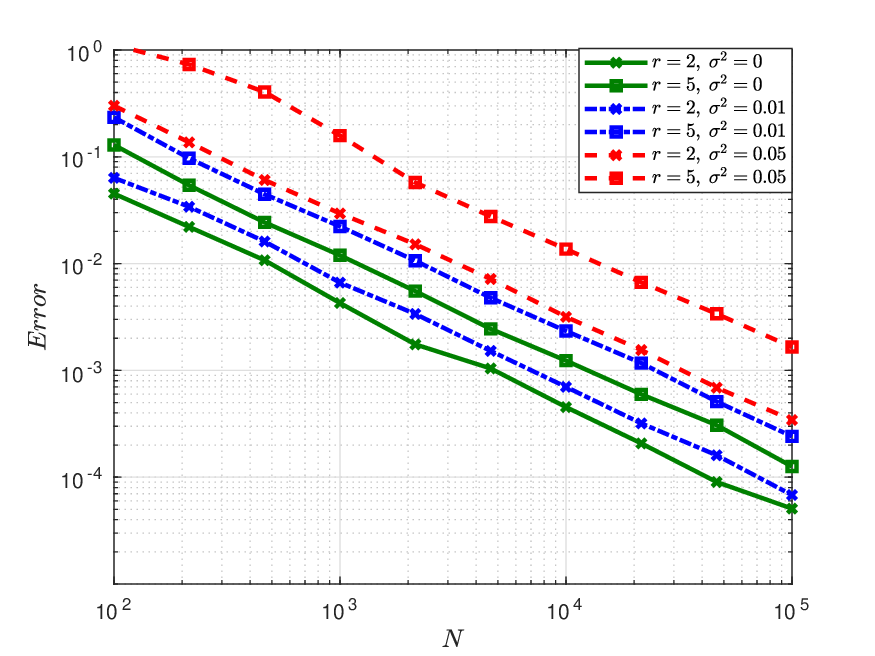} 
    }    
	\caption[L vs r] 
	{Estimation error of our algorithms (Algorithm~\ref{alg:step 1 alg} followed by Algorithm~\ref{alg:step 2 direct}) in the real-valued case as a function of $N$, for $L=26$, and several ranks and noise levels. The errors were averaged over $200$ trials.	}  \label{fig:E vs N real}
\end{figure}

\section{Summary and discussion}
In this work, we considered the problem of recovering the covariance matrix of a random signal $x$ observed through unknown translations and corrupted by noise, where the signal $x$ is low-rank (i.e. it follows the factor model~\eqref{eq:x model} with $r$ much smaller than $L$). We have shown that unique recovery of the covariance matrix is possible (up to a set of fundamental ambiguities, see Proposition~\ref{prop:fundamental ambiguities}) when  $r<\sqrt{L}$ and Condition~\ref{cond:low-rank recovery} holds. We provided statistically-consistent polynomial-time estimation procedures, and concluded with numerical simulations corroborating our theoretical findings. 

There are many open questions emerging from this work, giving rise to several possible future research directions.
First, we discuss future research directions associated with the model~\eqref{eq:MRFA model def}--\eqref{eq:x model}. While we have shown that recovery of the covariance matrix from the power spectrum and the trispectrum is possible when $r<\sqrt{L}$ and impossible when $r=L$ (i.e. the covariance is full-rank), it is of interest to determine tighter upper and lower bounds on the rank $r$ characterizing when the recovery can be attained (both theoretically and using polynomial-time algorithms), possibly determining the exact phase transition, namely the set of ranks above which recovery is no longer possible. Moreover, even when covariance estimation from the power spectrum and the trispectrum alone is impossible, it is of interest to investigate the advantages of adding higher-order moments.
Last, while we established statistical consistency for our estimators, it is favorable to investigate their estimation errors in terms of the quantities governing our model ($N$, $L$, $r$, and $\sigma^2$).

Aside from the above-mentioned research directions associated with the model~\eqref{eq:MRFA model def}--\eqref{eq:x model}, it is worthwhile to consider various extensions of this model. First, one could replace the normal distribution with a broader family of distributions, allowing for more general factor models. Second, the one-dimensional setting (implicitly assumed in~\eqref{eq:MRFA model def}--\eqref{eq:x model}) could be extended to higher dimensions, with other group actions replacing the cyclic shift $R_s$ in~\eqref{eq:cyclic shift def}. For example, one-dimensional signals could be replaced with two-dimensional images, where cyclic shifts are replaced with in-plane rotations. This extension could have important applications in rotation-invariant processing of datasets of two-dimensional images, see for example~\cite{zhao2013fourier,landa2017steerable,landa2018steerable,ma2018heterogeneous}.

\section*{Acknowledgements}
We would like to thank Nicolas Boumal for several enlightening conversations, and to Boaz Nadler for useful comments and suggestions.
This research was supported by the European Research Council (ERC) under the European Union’s Horizon 2020 research and innovation programme (grant agreement 723991 - CRYOMATH).

\begin{appendices}

\section{Proof of Proposition~\ref{prop:f_2,f_4,f_6 explicit form}} \label{Appendix: Proof of f_2,f_4 explicit form}
The expression for $P_y$ follows directly from its definition~\eqref{eq:p_y def}. We now prove~\eqref{eq: T_y expression Sigma}. Towards this end, we use existing results on the moments of the complex normal distribution (see~\cite{sultan1996moments}). However, since $\hat{\Sigma}_x$ is not invertible when $r<L$, we cannot claim that $\hat{x}\sim \mathbb{C}\mathcal{N}(0,\hat{\Sigma}_x)$. Therefore, we first treat the case of $r=L$, and then extend our result to the case of $r<L$ by a continuity argument.

Suppose that $r=L$, i.e. $\lambda_i>0$ for all $i$. Then $\hat{x}\sim\mathbb{C}\mathcal{N}(0,\hat{\Sigma}_x)$, and according to Theorem~5 in~\cite{sultan1996moments} we have
\begin{equation}
\mathbb{E} [ \hat{x}\hat{x}^* \otimes {\hat{x}\hat{x}^*} ] = \left( I_{L^2} + I_{L,L} \right) \left( \hat{\Sigma}_x \otimes \hat{\Sigma}_x \right), \label{eq:M_4 expression ref}
\end{equation}
where $\otimes$ is the Kronecker product, and $I_{L,L}$ is the $L^2\times L^2$ commutation matrix, given by
\begin{equation}
I_{L,L}[k_1+k_3 L,m] = 
\begin{dcases}
1, & m = k_1 L + k_3, \\
0, & \text{otherwise}, \label{eq:commutation matrix def}
\end{dcases}
\end{equation}
for $k_1,k_3\in\{0,\ldots,L-1\}$, and $m\in\{0,\ldots,L^2-1\}$.
Note that we can write $M^{(4)}_{\hat{x}}$ (defined by replacing $\hat{y}$ in~\eqref{eq:M_2 and M_4 def} with $\hat{x}$) as 
\begin{equation}
M^{(4)}_{\hat{x}}[k_1,k_2,k_3,k_4] = \left\{\mathbb{E} [ \hat{x}\hat{x}^* \otimes {\hat{x}\hat{x}^*} ]\right\}[k_1+k_3 L,k_2 + k_4 L],
\end{equation}
and it follows from~\eqref{eq:M_4 expression ref} and~\eqref{eq:commutation matrix def} that
\begin{align}
M^{(4)}_{\hat{x}}[k_1,k_2,k_3,k_4] &= \{ \hat{\Sigma}_x \otimes \hat{\Sigma}_x \} [k_1 + k_3 L, k_2 + k_4 L] + \{ \hat{\Sigma}_x \otimes \hat{\Sigma}_x \} [k_1 L + k_3, k_2 + k_4 L] \nonumber \\
&= \hat{\Sigma}_x[k_1,k_2] \hat{\Sigma}_x[k_3,k_4] +  \hat{\Sigma}_x[k_3,k_2] \hat{\Sigma}_x[k_1,k_4] \nonumber \\
&= \hat{\Sigma}_x[k_1,k_2] \overline{\hat{\Sigma}_x[k_4,k_3]} +  \overline{\hat{\Sigma}_x[k_2,k_3]} {\hat{\Sigma}_x[k_1,k_4]},
\end{align}
where we used the fact that $\hat{\Sigma}_x$ is Hermitian.
Therefore, by substituting $k_4 = k_1 - k_2 + k_3$ we get
\begin{align}
T_y[k_1,k_2,k_3] &= M^{(4)}_{\hat{y}}[k_1,k_2,k_3,k_1-k_2+k_3] = M^{(4)}_{\hat{x}}[k_1,k_2,k_3,k_1-k_2+k_3] \nonumber \\ 
&= \hat{\Sigma}_x[k_1,k_2] \overline{\hat{\Sigma}_x[k_1-k_2+k_3,k_3]} +  {\hat{\Sigma}_x[k_1,k_1-k_2+k_3]} \overline{\hat{\Sigma}_x[k_2,k_3]}. \label{eq:T_y expression for r=L}
\end{align}
Last, we extend the result above to the case of $r<L$. It is easy to verify that $T_y$ is continuous in $\lambda_1,\ldots,\lambda_{L}$, since $T_y$ is a subset of 
\begin{equation}
M_{\hat{x}}^{(4)}[k_1,k_2,k_3,k_4] = \mathbb{E}\left[\hat{x}[k_1] \overline{\hat{x}[k_2]} \hat{x}[k_3] \overline{\hat{x}[k_4]} \right] = \sum_{i,j,\ell,m = 0}^{L-1} \mathbb{E}\left[ a_i \overline{a_j} a_\ell \overline{a_m}\right] v_i[k_1] \overline{v_j[k_2]} v_\ell[k_3] \overline{v_m[k_4]}, 
\end{equation}
which is a polynomial in $\lambda_1,\ldots,\lambda_{L}$ (since $a_i\sim\mathbb{C}\mathcal{N}(0,\lambda_i)$) for all values of $\lambda_1,\ldots,\lambda_L$ (including zero).
Therefore, fixing $r$ and taking $\lambda_i \rightarrow 0$ for $i>r$ on both sides of~\eqref{eq:T_y expression for r=L}, we get due to the continuity of $T_y$ that~\eqref{eq:T_y expression for r=L} also holds for any $r<L$ (where $\lambda_i = 0$ for $i>r$).

\section{Proof of Lemma~\ref{lem:P_y and T_y consequence}} \label{Appendix: Proof of Lemma P_y and T_y consequence}
For $X\in\mathbb{C}^{L\times L}$ it is convenient to define $D_m\in\mathbb{C}^L$ as
\begin{equation}
D_m[k] = X[k,\operatorname{mod} ( k+m, L)], \label{eq:D_m def}
\end{equation}
for $m,k\in \{ 0,\ldots,L-1\}$. In other words, $D_m$ is the $m$'th diagonal of $X$ with circulant wrapping, and is analogous to $d_m$ of~\eqref{eq:d_m def} for $\sigma=0$ (which is the $m$'th diagonal of $\hat{\Sigma}_x$ with circulant wrapping).
By~\eqref{eq:D_m def},~\eqref{eq: P_mathcal expression}, and~\eqref{eq: T_mathcal expression}, the set of equations~\eqref{eq:P_y and T_y equations} can be written as 
\begin{align}
&P_y[k_1] = \mathcal{P}(X)[k_1] = D_0[k_1], \label{eq:d_0 expression proof of Lemma}\\
&T_y[k_1,k_2,k_3] = \mathcal{T}(X)[k_1,k_2,k_3] = D_{k_2-k_1}[k_1] \overline{D_{k_2-k_1}[k_3-k_2+k_1]} + D_{k_3-k_2}[k_1] \overline{D_{k_3-k_2}[k_2]}. \label{eq:T_y proof of lemma perliminary}
\end{align}
By taking (with some abuse of notation) $k_2 = k_1 + m$, $k_3 = k_2 +m$, we can rewrite \eqref{eq:T_y proof of lemma perliminary} more conveniently (and analogously to~\eqref{eq:T_y expression diagonals}) as
\begin{equation}
T_y[k_1,k_1+m,k_2+m] = D_m[k_1] \overline{D_m[k_2]} + D_{k_2-k_1}[k_1] \overline{D_{k_2-k_1}[k_1+m]}, \label{eq:d_m expression proof of Lemma}
\end{equation}
for $k_1,k_2,m\in\{0,\ldots,L-1\}$ (noting that $k_1$ and $k_2$ in~\eqref{eq:d_m expression proof of Lemma} are not equivalent to $k_1$ and $k_2$ in~\eqref{eq:T_y proof of lemma perliminary}).
Now, for the ``if'' part of the ``if and only if'' statement of Lemma~\ref{lem:P_y and T_y consequence}, it is straightforward to verify that taking $X$ according to~\eqref{eq:P_y and T_y circulant phase ambiguities}, namely $D_m[k] = \hat{\Sigma}_x[k,k+m] e^{\imath \varphi_m}$ with $\varphi_0 =0$, satisfies both~\eqref{eq:d_0 expression proof of Lemma} and~\eqref{eq:d_m expression proof of Lemma} 
(substituting~\eqref{eq: P_y expression Sigma} and~\eqref{eq: T_y expression Sigma} into~\eqref{eq:d_0 expression proof of Lemma} and~\eqref{eq:d_m expression proof of Lemma}) since the terms $e^{\imath \varphi_m}$ cancel out. We now consider the other direction, namely the ``only if'' part of the statement. Suppose that the set of equations in~\eqref{eq:P_y and T_y equations} hold. We now prove the required result by the following three steps. First, taking $m=0$ in~\eqref{eq:d_m expression proof of Lemma} and substituting~\eqref{eq:d_0 expression proof of Lemma} gives
\begin{equation}
T_y[k_1,k_1,k_2] = P_y[k_1] P_y[k_2] + | D_{k_2-k_1}[k_1] |^2,
\end{equation}
which establishes (by substituting~\eqref{eq: P_y expression Sigma} and~\eqref{eq: T_y expression Sigma}) that 
\begin{equation}
| D_{k_2-k_1}[k_1] | = | \hat{\Sigma}_x[k_1,k_2] | \label{eq:D magnitude equation}
\end{equation}
for every $k_1,k_2\in\{0,\ldots,L-1\}$. 
Second, taking $m=1$ and $k_2 = k_1 + 1$ in~\eqref{eq:d_m expression proof of Lemma} leads to
\begin{equation}
T_y[k_1,k_1+1,k_1+2] = 2 D_{1}[k_1] \overline{D_{1}[k_1+1]},
\end{equation}
for $k_1 \in \{0,\ldots,L-1\}$. Substituting~\eqref{eq: T_y expression Sigma} in the above equation, we have
\begin{equation}
2 \hat{\Sigma}[k_1,k_1+1] \overline{ \hat{\Sigma}[k_1+1,k_1+2]} = 2 D_{1}[k_1] \overline{D_{1}[k_1+1]}. \label{eq:D_1 iterative expression}
\end{equation}
Now, taking $k_1=0$ and $k_2=1$ in~\eqref{eq:D magnitude equation} establishes that $D_1[0] =\hat{\Sigma}_x[0,1] e^{\imath \varphi_1}$ with some $\varphi_1\in [0,2\pi)$. Then, ~\eqref{eq:D_1 iterative expression} determines $D_1[k]$ completely for all $k$ by an iterative procedure, as $D_1[1]$ is obtained from $D_1[0]$, $D_1[2]$ is obtained from $D_1[1]$, and so on, where each element is obtained by dividing both sides of~\eqref{eq:D_1 iterative expression} by $D_1[k]$ (we never divide by zero from the assumption in Lemma~\ref{lem:P_y and T_y consequence} that $\hat{\Sigma}_x[k_1,k_2]\neq 0$). Consequently, we have that
\begin{equation}
D_1[k] = \hat{\Sigma}[k,k+1] e^{\imath \varphi_1}. \label{eq:D_1 expression}
\end{equation} 
Last, taking $k_2-k_1=1$ in~\eqref{eq:d_m expression proof of Lemma} gives
\begin{equation}
T_y[k_1,k_1+m,k_1+m+1] = D_m[k_1] \overline{D_m[k_1+1]} + D_{1}[k_1] \overline{D_{1}[k_1+m]},
\end{equation}
and substituting~\eqref{eq: T_y expression Sigma} together with~\eqref{eq:D_1 expression} establishes that
\begin{equation}
\hat{\Sigma}[k_1,k_1+m] \overline{ \hat{\Sigma}[k_1+1,k_1+m+1]} = D_m[k_1] \overline{D_m[k_1+1]}. \label{eq:D_m iterative expression}
\end{equation}
Repeating our previous argumentation (for $m=1$) now for every $m\in\{2,\ldots,L-1\}$, we take $k_1=0$ and $k_2=m$ in~\eqref{eq:D magnitude equation}, which establishes that $D_m[0] =\hat{\Sigma}_x[0,m] e^{\imath \varphi_m}$ with some $\varphi_m\in [0,2\pi)$, and then~\eqref{eq:D_m iterative expression} determines $D_m[k]$ for every $k=1,\ldots,L-1$, by the previously mentioned iterative process. Therefore, we have that
\begin{equation}
D_m[k] = \hat{\Sigma}[k,k+m] e^{\imath \varphi_m},
\end{equation}
for all $m=1,\ldots,L-1$ and $k=0,\ldots,L-1$, which concludes the proof. 

\section{Proof of Lemma~\ref{lem:low-rank diagonal phase retrieval}} \label{Appendix: Proof of lemma low-rank diagonal phase retrieval}
We begin with the case of $r=1$. Note that
\begin{equation}
\widetilde{X} = \hat{\Sigma}_x \odot \operatorname{Circulant}\{1,e^{\imath (\varphi_1 - \widetilde{\varphi}_1)},\ldots,e^{\imath (\varphi_{L-1} - \widetilde{\varphi}_{L-1})}\}.
\end{equation}
Let us define
\begin{equation}
\varphi_m^{'} := \varphi_m - \widetilde{\varphi}_m,
\end{equation}
for $m=1,\ldots,L-1$, and it follows that we can write $\widetilde{X}$ as
\begin{equation}
\widetilde{X} = \lambda_1 \hat{v}_1 \hat{v}_1^* \odot \operatorname{Circulant}\{1,e^{\imath \varphi_1^{'}},\ldots,e^{\imath \varphi_{L-1}^{'}}\} = \lambda_1 \operatorname{diag}\{\hat{v}_1\} \cdot \operatorname{Circulant}\{1,e^{\imath \varphi_1^{'}},\ldots,e^{\imath \varphi_{L-1}^{'}}\} \cdot  \operatorname{diag}\{\overline{\hat{v}_1}\}.
\end{equation}
Suppose that $\widetilde{X}$ solves Problem~\ref{problem:PSD circulant phase programming}, namely $\widetilde{X}$ is Hermitian and PSD. Recall that the \textit{inertia} of a Hermitian matrix $A$ is the triplet $\{n_0\{A\},n_+\{A\},n_-\{A\}\}$ describing the number of zero, positive, and negative eigenvalues, respectively, of $A$. Since $\widetilde{X}$ is Hermitian and PSD, all of its eigenvalues are non-negative, hence $n_-\{\widetilde{X}\}=0$, and by Sylvester's law of inertia, the matrix
\begin{equation}
\left(\operatorname{diag}\{\hat{v}_1\}\right)^{-1} \cdot \widetilde{X} \cdot \left(\operatorname{diag}\{\overline{\hat{v}_1}\}\right)^{-1} = \lambda_1 \operatorname{Circulant}\{1,e^{\imath \varphi_1^{'}},\ldots,e^{\imath \varphi_{L-1}^{'}}\}, \label{eq:X rank 1 similarity}
\end{equation}
is also Hermitian and PSD, since it preserves the inertia of $\widetilde{X}$ (where $\left(\operatorname{diag}\{\hat{v}_1\}\right)^{-1}$ is well-defined since $\hat{v}_1[k]\neq 0$ from the assumptions of Lemma~\ref{lem:low-rank diagonal phase retrieval}). Now, it is well-known that a circulant matrix can be diagonalized by the DFT matrix~\eqref{eq:DFT matrix and vectors def}, and in particular, we can write
\begin{equation}
\operatorname{Circulant}\{1,e^{\imath \varphi_1^{'}},\ldots,e^{\imath \varphi_{L-1}^{'}}\} = \sum_{i=1}^L \mu_i \left(\frac{f_i}{\sqrt{L}}\right) \left(\frac{f_i}{\sqrt{L}}\right)^*,
\end{equation}
where $f_i$ is the $i$'th DFT vector defined in~\eqref{eq:DFT matrix and vectors def}, and $\mu_1,\ldots,\mu_L$ are the eigenvalues of $\operatorname{Circulant}\{1,e^{\imath \varphi_1^{'}},\ldots,e^{\imath \varphi_{L-1}^{'}}\}$, which are non-negative as shown in~\eqref{eq:X rank 1 similarity}. We now prove that $\mu_1,\ldots,\mu_L$ are all non-negative only if they are all zero except for one of them. Note that 
\begin{equation}
\operatorname{Trace}\left\{ \operatorname{Circulant}\{1,e^{\imath \varphi_1^{'}},\ldots,e^{\imath \varphi_{L-1}^{'}}\}\right\} = L, \qquad \left\Vert \operatorname{Circulant}\{1,e^{\imath \varphi_1^{'}},\ldots,e^{\imath \varphi_{L-1}^{'}}\} \right\Vert_F^2 = L^2,
\end{equation}
and therefore
\begin{equation}
\sum_{i=1}^L \mu_i = L, \qquad \sum_{i=1}^L \mu_i^2 = L^2. \label{eq:mu_i equations}
\end{equation}
When combining both of the above equations (in particular, squaring both sides of the left equation and subtracting the right equation), we have that
\begin{equation}
\sum_{i\neq j} \mu_i\mu_j = 0, \label{eq:mu products equation}
\end{equation}
with $\mu_i\geq 0$ for all $i$. It then immediately follows that $\mu_\ell>0$ for some single $l\in\{0,\ldots,L-1\}$ while $\mu_k = 0$ for all $k\neq \ell$, since otherwise $\mu_\ell \mu_k >0$ for some $\ell,k$, which is a contradiction to~\eqref{eq:mu products equation}. Consequently, we have that $\mu_\ell = L$ for some $\ell\in\{0,\ldots,L-1\}$ (see the left equation in~\eqref{eq:mu_i equations}), and 
\begin{equation}
\operatorname{Circulant}\{1,e^{\imath \varphi_1^{'}},\ldots,e^{\imath \varphi_{L-1}^{'}}\} = {f_\ell} {f_\ell}^*, 
\end{equation}
which implies that $\widetilde{X} \in \Omega(\hat{\Sigma}_x)$ (see also~\eqref{eq:Omega elements with DFT vectors expression}), and hence $\widetilde{X}$ solves Problem~\ref{problem:Sigma_x diagonal phase retrieval problem}.

Last, for the case of $1<r<\sqrt{L}$ under Condition~\ref{cond:low-rank recovery}, we refer the reader to the derivation in Section~\ref{section: step 2}, which provides a complete proof for this case through the derivation of the procedure for solving Problem~\ref{problem:PSD circulant phase programming}, and whose results are summarized in Proposition~\ref{prop:low-rank procedure derivation summary}.

\section{Proof of Proposition~\ref{prop:P_y and T_y ambiguity full rank}} \label{appendix:proof of positive Sigma identifiability for P_y and T_y}
Let us take $X \in\mathbb{C}^{L\times L}$ as
\begin{equation}
X = \hat{\Sigma}_x \odot \operatorname{Circulant}\{1,e^{\imath \varphi_1},1,\ldots,1,e^{-\imath \varphi_1}\}.
\end{equation}
More specifically, we have
\begin{equation}
X[k_1,k_2] =
\begin{dcases}
\hat{\Sigma}_x[k_1,k_2] \cdot e^{\imath \varphi_1}, & \operatorname{mod}(k_2-k_1,L) = 1, \\
\hat{\Sigma}_x[k_1,k_2] \cdot e^{-\imath \varphi_1}, & \operatorname{mod}(k_2-k_1,L) = -1, \\
\hat{\Sigma}_x[k_1,k_2], & \text{otherwise}.
\end{dcases}
\end{equation}
Clearly, $X$ follows the form of~\eqref{eq:P_y and T_y circulant phase ambiguities} in Lemma~\ref{lem:P_y and T_y consequence}, hence $X$ satisfies the equations
\begin{equation}
P_y = \mathcal{P}\{X\}, \qquad T_y = \mathcal{T}\{X\}.
\end{equation}
Moreover, $X$ is Hermitian, and
\begin{equation}
\left\Vert \hat{\Sigma}_x -X \right\Vert_{F} \underset{\varphi_1\rightarrow 0}{\longrightarrow} 0.
\end{equation}
Since the eigenvalues of a square matrix depend continuously on its elements (theorem 2.4.9.2 in~\cite{horn2012matrix}), we also have that
\begin{equation}
\left\vert \lambda_L - \lambda_{\min} \{ X \} \right\vert \underset{\varphi\rightarrow 0}{\longrightarrow} 0,
\end{equation}
where $\lambda_{\min} \{ X \}$ stands for the smallest eigenvalue of $X$.
Because $\lambda_L > 0$ when $r=L$, there exists a sufficiently small $\epsilon>0$ such that if $0<|\varphi_1| \leq \epsilon$ then $\lambda_{\min} \{ X \} > 0$, and consequently $X$ is PSD. 
However, it is evident that $X \notin \Omega(\hat{\Sigma}_x)$, which concludes the proof.

\section{Justification of~\eqref{eq:P_y expression diagonals} and~\eqref{eq:T_y expression diagonals}} \label{appendix:Justification of expression for P_y and T_y with diagonals}
Since we want to account for an arbitrary noise variance $\sigma^2$, whereas Proposition~\ref{prop:f_2,f_4,f_6 explicit form} considers explicitly the noiseless case $\sigma=0$, we introduce a certain update which places us in the noiseless setting and allows us to use Proposition~\ref{prop:f_2,f_4,f_6 explicit form}. Note that according to the definition of $y$ in~\eqref{eq:MRFA model def}, $y$ admits the same distribution as $R_s\{x + \eta \}$ (since the distribution of the noise $\eta$ is invariant to the operation $R_s$), and consequently, $\hat{y}$ from~\eqref{eq:MRFA_model_def_Fourier} admits the same distribution as $\operatorname{diag}(f_s) (\hat{x} + \hat{\eta})$. Therefore, we can absorb the noise variance $\sigma^2$ into the main diagonal of $\hat{\Sigma}_x$. That is, with some abuse of notation, we update $\hat{\Sigma}_x$ according to
\begin{equation}
\hat{\Sigma}_x \longleftarrow \hat{\Sigma}_x + \sigma^2 I_L, \label{eq:noise absob update}
\end{equation}
and then omit the noise vector $\hat{\eta}$ (from~\eqref{eq:MRFA_model_def_Fourier}) entirely. 
This update places us in the noiseless setting of $\sigma=0$ in~\eqref{eq:MRFA_model_def_Fourier} (after fixing $\hat{\Sigma}_x$ according to~\eqref{eq:noise absob update}) where the power spectrum and the trispectrum are determined solely by $\hat{\Sigma}_x$ according to Proposition~\ref{prop:f_2,f_4,f_6 explicit form}. 
Then, taking $d_m$ according to~\eqref{eq:d_m def} and applying Proposition~\ref{prop:f_2,f_4,f_6 explicit form} gives~\eqref{eq:P_y expression diagonals} and~\eqref{eq:T_y expression diagonals}.

\section{The trispectrum for the real-valued case} \label{appendix:trispectrum for real-valued}
This proof follows very closely with the proof in Appendix~\ref{Appendix: Proof of f_2,f_4 explicit form}. Analogously to the proof in Appendix~\ref{Appendix: Proof of f_2,f_4 explicit form}, we first consider the case of $r=L$ (i.e. $\lambda_i>0$ for all $i$) so we may claim that $x$ is normally-distributed and use standard results on the moments of the normal distribution. We then extend our result to any $r<L$ by a continuity argument. Consider the case of $r=L$. Then, $x\sim\mathcal{N}(0,\Sigma_x)$, and according to Isserlis' formula~\cite{isserlis1918formula} (for computing the moments of the zero-mean multivariate normal distribution) we have that
\begin{align}
\mathbb{E}\left[ \hat{x}[k_1] \hat{x}[k_2] \hat{x}[k_3] \hat{x}[k_4]\right] &= \mathbb{E}\left[ \hat{x}[k_1] \hat{x}[k_2]\right] \mathbb{E}\left[ \hat{x}[k_3] \hat{x}[k_4]\right] + \mathbb{E}\left[ \hat{x}[k_1] \hat{x}[k_3]\right]\mathbb{E}\left[ \hat{x}[k_2] \hat{x}[k_4]\right] + \mathbb{E}\left[ \hat{x}[k_1] \hat{x}[k_4]\right] \mathbb{E}\left[ \hat{x}[k_2] \hat{x}[k_3]\right] \nonumber \\
&= \hat{\Sigma}_x[k_1,-k_2] \hat{\Sigma}_x[k_3,-k_4] + \hat{\Sigma}_x[k_1,-k_3] \hat{\Sigma}_x[k_2,-k_4] + \hat{\Sigma}_x[k_1,-k_4] \hat{\Sigma}_x[k_2,-k_3],
\end{align}
where we used the fact that $x$ is real-valued, hence $\hat{x}[k] = \overline{\hat{x}[-k]}$ and thus $\mathbb{E}\left[ \hat{x}[k_1] \hat{x}[k_2]\right] = \mathbb{E}[ \hat{x}[k_1] \overline{\hat{x}[-k_2]}] = \hat{\Sigma}_x[k_1,-k_2]$.
Therefore, it follows that
\begin{align}
M^{(4)}_{\hat{x}}[k_1,k_2,k_3,k_4] &= \mathbb{E}\left[ \hat{x}[k_1] \hat{x}[-k_2] \hat{x}[k_3] \hat{x}[-k_4]\right] \nonumber \\ 
&= \hat{\Sigma}_x[k_1,k_2] \hat{\Sigma}_x[k_3,k_4] + \hat{\Sigma}_x[k_1,-k_3] \hat{\Sigma}_x[-k_2,k_4] + \hat{\Sigma}_x[k_1,k_4] \hat{\Sigma}_x[-k_2,-k_3] \nonumber \\
&= \hat{\Sigma}_x[k_1,k_2] \overline{\hat{\Sigma}_x[k_4,k_3]} + \hat{\Sigma}_x[k_3,k_2] \overline{\hat{\Sigma}_x[k_4,k_1]} + \hat{\Sigma}_x[k_1,-k_3] \overline{\hat{\Sigma}_x[k_2,-k_4]}, \label{eq:M_4 real expression Sigma_hat}
\end{align}
where we used the observation that $\hat{\Sigma}_x[k_1,k_2] = \overline{\hat{\Sigma}_x[-k_1,-k_2]} = \hat{\Sigma}_x[-k_2,-k_1]$ for any $k_1,k_2$, since $\hat{\Sigma}_x$ is Hermitian. Taking $d_m$ according to~\eqref{eq:d_m def}, and using $G_m = d_m d_m^*$ in~\eqref{eq:M_4 real expression Sigma_hat} gives
\begin{align}
T_y[k_1,k_1+m,k_2+m] &= M^{(4)}_{\hat{y}}[k_1,k_1+m,k_2+m,k_1-(k_1-m)+(k_2-m)] \nonumber \\
&= M^{(4)}_{\hat{x}}[k_1,k_1+m,k_2+m,k_1-(k_1-m)+(k_2-m)] \nonumber \\
&= \hat{\Sigma}_x[k_1,k_1+m] \overline{\hat{\Sigma}_x[k_2,k_2+m]} 
+ \hat{\Sigma}_x[k_2+m,k_1+m] \overline{\hat{\Sigma}_x[k_2,k_1]}  \nonumber \\
&\quad + \hat{\Sigma}_x[k_1,-k_2-m)] \overline{\hat{\Sigma}_x[k_1+m,-k_2)]} \nonumber \\
&= \hat{\Sigma}_x[k_1,k_1+m] \overline{\hat{\Sigma}_x[k_2,k_2+m]} 
+ \hat{\Sigma}_x[k_2,k_1] \overline{\hat{\Sigma}_x[k_1+m,k_2+m]} \nonumber \\
&\quad + \hat{\Sigma}_x[-k_2,k_1+m] \overline{\hat{\Sigma}_x[-k_2-m,k_1] } \nonumber \\
&= G_m[k_1,k_2] + G_{k_2-k_1}[k_1,k_1+m] + G_{k_1+k_2+m}[-k_2,-k_2-m],
\end{align}
where we used the fact that $\hat{\Sigma}_x$ is Hermitian.
Last, a continuity argument (repeating the argument at the end of Appendix~\ref{Appendix: Proof of f_2,f_4 explicit form}) extends the above result to the case of an arbitrary $r<L$.

\section{Proof of Theorem~\ref{thm:consistency of step 1} } \label{appendix: Proof of consistency of step 1}

The following lemma establishes that when $\widetilde{T}_y = T_y$ and $\widetilde{P}_y = P_y$ (i.e. when $N\rightarrow \infty$), then~\eqref{eq:optim for diagonals} admits a unique minimizer, which is equal to~\eqref{eq:G_m def}.
\begin{lem} \label{lem:optim for diagonals clean case unique min}
Suppose that $\widetilde{T}_y = T_y$, $\widetilde{P}_y = P_y$, and assume that $|\hat{\Sigma}_x[i,j]|>0$ for all $i,j$. If $\{G_m^\star\}_{m=1}^{L-1}$ is a minimizer of~\eqref{eq:optim for diagonals}, then $G_m^\star = G_m = d_m\cdot d_m^*$ for $m=1,\ldots,L-1$.
\end{lem}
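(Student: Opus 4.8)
The plan is to first show that every minimizer of~\eqref{eq:optim for diagonals} attains objective value zero, and then to argue that zero-objective feasibility pins down each block uniquely. Since $\widetilde{T}_y = T_y$ and $\widetilde{P}_y = P_y$, the true outer products $\{G_m = d_m d_m^*\}$ from~\eqref{eq:G_m def} are feasible: each $G_m \succeq 0$ (being rank-one PSD), $G_0 = d_0 d_0^* = P_y P_y^T$ because $d_0 = P_y$ is real (by~\eqref{eq:P_y expression diagonals}), and by~\eqref{eq:T_y expression G_m} every residual in~\eqref{eq:optim for diagonals} vanishes. As the objective is a sum of squared moduli it is nonnegative, so its minimum equals zero and is attained there; hence any minimizer $\{G_m^\star\}$ also makes every summand vanish, giving $T_y[k_1,k_1+m,k_2+m] = G_{k_2-k_1}^\star[k_1,k_1+m] + G_m^\star[k_1,k_2]$ for all indices. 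Setting $E_m := G_m^\star - G_m$ (Hermitian, with $E_0 = 0$ from the constraint $G_0^\star = \widetilde{P}_y\widetilde{P}_y^T$) and subtracting~\eqref{eq:T_y expression G_m} yields the homogeneous relations $E_m[k_1,k_1+n] + E_n[k_1,k_1+m] = 0$ for all $k_1,m,n$ (writing $k_2 = k_1+n$). Taking $n=0$ gives $E_m[k_1,k_1]=0$ (vanishing diagonal), and $n=m$ gives $E_m[k_1,k_1+m]=0$ (the $m$-th circulant superdiagonal vanishes); in particular $E_1$ has both zero diagonal and zero first superdiagonal.

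The heart of the argument, which I expect to be the main obstacle, is to upgrade the PSD constraint (now without the rank-one constraint) into rank-one-ness of each $G_m^\star$. I would do this with a $2\times 2$-minor / Cauchy--Schwarz argument combined with a bootstrap across $m$. Consider first $G_1^\star = d_1 d_1^* + E_1 \succeq 0$: its diagonal equals $|d_1[k]|^2 > 0$ (using $\hat{\Sigma}_x[i,j]\neq 0$), and since the diagonal and first superdiagonal of $E_1$ vanish, its first circulant superdiagonal equals $d_1[k]\overline{d_1[k+1]}$. Hence every $2\times 2$ principal submatrix on consecutive indices $\{k,k+1\}$ has zero determinant. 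Writing $G_1^\star = B^* B$ with columns $g_0,\dots,g_{L-1}$, a vanishing $2\times 2$ minor with positive diagonal entries is exactly the equality case of Cauchy--Schwarz, forcing $g_{k+1}$ to be parallel to $g_k$; cyclic connectivity of the consecutive pairs then makes all $g_k$ mutually parallel, so $G_1^\star$ is rank one. Matching its nonvanishing diagonal and first superdiagonal to those of $d_1 d_1^*$ forces $G_1^\star = d_1 d_1^*$, i.e. $E_1 = 0$.

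Finally I would bootstrap: with $E_1 = 0$, the homogeneous relation at $n=1$ gives $E_m[k_1,k_1+1] = -E_1[k_1,k_1+m] = 0$ for every $m$, so each $E_m$ now has both vanishing diagonal and vanishing first superdiagonal. Repeating the consecutive-minor argument verbatim for each $G_m^\star = d_m d_m^* + E_m \succeq 0$ shows it is rank one, and matching the diagonal and first superdiagonal (both nonzero) to $d_m d_m^*$ gives $G_m^\star = d_m d_m^* = G_m$ for all $m = 1,\dots,L-1$, as claimed. The only point to verify carefully is the phase-propagation step when reconstructing the rank-one factor from its diagonal and first superdiagonal, which is cyclically consistent precisely because $d_m d_m^*$ is itself a valid rank-one matrix; this is routine and parallels the iterative determination of the diagonals in the proof of Lemma~\ref{lem:P_y and T_y consequence}.
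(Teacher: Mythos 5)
Your proof is correct, and its skeleton coincides with the paper's: both establish that every minimizer attains objective value zero, deduce that $G_m^\star$ agrees with $G_m$ on the main diagonal (via $m=0$ and $G_0^\star = P_yP_y^T$) and on the $m$'th circulant diagonal (via $k_2-k_1=m$), then force rank-one-ness of $G_1^\star$, and finally bootstrap the resulting first-diagonal agreement to all other $m$. The only genuine difference is how the rank-one step is executed. The paper normalizes, setting $\hat{G}_m^\star = \operatorname{diag}\{d_m\}^{-1} G_m^\star \operatorname{diag}\{d_m^*\}^{-1}$, interprets the resulting PSD matrix with unit diagonal as the correlation matrix of a Gaussian vector $z_m$, and argues that $\mathbb{E}\vert z_m[k]-z_m[k+1]\vert^2 = 0$ forces almost-sure equality of the entries, hence $\hat{G}_m^\star = \mathbf{1}\mathbf{1}^T$. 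You instead work directly with a Gram factorization $G_1^\star = B^*B$ and observe that the vanishing of the consecutive-index $2\times 2$ principal minors (with strictly positive diagonal, guaranteed by $\hat\Sigma_x[i,j]\neq 0$) is the equality case of Cauchy--Schwarz, so consecutive Gram columns are parallel and cyclic connectivity propagates this to all columns. These are the same fact in two costumes --- the paper's probabilistic step is exactly your Gram-vector equality in disguise --- but your version is more elementary: it avoids introducing the auxiliary random vector and the diagonal normalization, and it makes transparent why the assumption $\vert\hat\Sigma_x[i,j]\vert>0$ is needed (nonzero diagonal entries so that the Cauchy--Schwarz equality case yields parallelism). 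Your final phase-matching step (recovering $w = e^{\imath\theta}d_m$ from the diagonal and first superdiagonal, hence $ww^* = d_md_m^*$) is also sound and is the deterministic analogue of the paper's conclusion $\hat{G}_m^\star=\mathbf{1}\mathbf{1}^T$.
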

\begin{proof}
Since~\eqref{eq:optim for diagonals} is convex, all minimizers attain the same objective value, which is zero since $\{G_m\}_{m=1}^{L-1}$ is a minimizer. Therefore, we have that 
\begin{equation}
{T}_y[k_1,k_1+m,k_2+m] = G^\star_{k_2-k_1}[k_1,k_1+m] + G^\star_m[k_1,k_2], \label{eq:T and G_star equations}
\end{equation}
for all indices $m,k_1,k_2$.
Considering the case $m=0$, and observing that $G_0^\star =G_0$ (since~\eqref{eq:optim for diagonals} enforces ${G}^{\star}_0 = \widetilde{P}_y \widetilde{P}_y^T = P_y P_y^T = G_0$), we have
\begin{equation}
G^\star_{k_2-k_1}[k_1,k_1] = {T}_y[k_1,k_1,k_2] - G_0[k_1,k_2] = G_{k_2-k_1}[k_1,k_1]. \label{eq:G_m main diagonal equality}
\end{equation}
Hence, the main diagonal of $G_m^{\star}$ is equal to the main diagonal of $G_m$, for $m=1,\ldots,L-1$.
Next, consider the case $k_2-k_1=m$, for which we get from~\eqref{eq:T and G_star equations}
\begin{equation}
2G_m^\star[k_1,k_1+m] = T_y[k_1,k_1+m,k_1+2m] = 2G_m[k_1,k_1+m]. \label{eq:G_m m'th diagonal equality}
\end{equation}
Therefore, we conclude that the $m$'th diagonal (with circulant wrapping) of $G_m^{\star}$ is equal to the $m$'th diagonal of $G_m$, for $m=1,\ldots,L-1$. Up to this point, we established that $G_m^{\star}$ and $G_m$ agree on two diagonals (their main diagonal and their $m$'th diagonal) for every $m$. Now, we turn to show that if $G_m^\star \succeq 0$, then~\eqref{eq:G_m main diagonal equality} and~\eqref{eq:G_m m'th diagonal equality} imply that $G_m^\star = G_m$ (i.e. $G_m^{\star}$ and $G_m$ agree on all diagonals).
Let us define the matrix
\begin{equation}
\hat{G}_m^\star = \operatorname{diag}\{d_m\}^{-1} G_m^\star \operatorname{diag}\{d_m^*\}^{-1}, \label{eq:G_m_hat def}
\end{equation}
where $d_m$ is from~\eqref{eq:d_m def}, and~\eqref{eq:G_m_hat def} is well defined since $|d_m[k]|>0$ for all $m,k$ from the assumptions of the lemma.
Since $\{G_m^\star\}_{m=1}^{L-1}$ is a minimizer of~\eqref{eq:optim for diagonals} then $G_m^\star \succeq 0$, and by~\eqref{eq:G_m_hat def} also $\hat{G}_m^\star \succeq 0$ (due to Sylvester's Inertia theorem).
Then, since $G_m = d_m d_m^*$, and the fact that $G_m^{\star}$ and $G_m$ have the same values on their main and $m$'th diagonals, it follows that 
\begin{equation}
\hat{G}_m^\star[k,k] = 1, \quad \hat{G}_m^\star[k,k+m] = 1,
\end{equation}
for all indices $m,k$.
Now, since $\hat{G}_m^\star$ is positive semidefinite with unit diagonal, it can take the role of a correlation matrix of a random vector. In particular, let us consider a random vector $z_m\in\mathbb{R}^L$, with $z_m[k]~\sim\mathcal{N}(0,\hat{G}_m^\star[k,k])$ for $k=0,\ldots,L-1$, noting that 
\begin{equation}
\mathbb{E}|z_m[k]|^2=1, \qquad \mathbb{E}\left[ z_m[k]z_m^*[k+m] \right]=1.
\end{equation}
Fixing $m=1$, the above relations imply that $z_1[k]$ and $z_1[k+1]$ are perfectly correlated normal variables with unit variances, and hence linearly dependent (almost surely) with 
\begin{equation}
z_1[k] = z_1[k+1],
\end{equation}
for $k=0,\ldots,L-1$ (since $\mathbb{E}\vert z[k] - z[k+1]\vert^2=0)$. Therefore, it follows that $z_1[0] = z_1[1] = \ldots = z_1[L-1]$ (almost surely), and $\hat{G}_1^\star = \mathbb{E}[z_1 z_1^*]$ must be of rank one with
\begin{equation}
\hat{G}_1^\star = \mathbf{1} \cdot \mathbf{1}^T, \label{eq:G_1_hat rank one}
\end{equation}
where $\mathbf{1}$ denotes an $L\times 1$ vector of ones. 
From~\eqref{eq:G_1_hat rank one} and~\eqref{eq:G_m_hat def} it then follows that
\begin{equation}
{G}_1^\star = d_1 d_1^* = G_1.
\end{equation}
Using the above result for $\hat{G}_1^\star$ together with~\eqref{eq:T and G_star equations} provides us with an additional equation on the diagonals of $G_m^\star$, namely
\begin{equation}
G^\star_{k_2-k_1}[k_1,k_1+1] = {T}_y[k_1,k_1+1,k_2+1] - G_1[k_1,k_2] = G_{k_2-k_1}[k_1,k_1+1], \label{eq:G_m first diagonal equality}
\end{equation}
and hence, using~\eqref{eq:G_m_hat def} again,
\begin{equation}
\hat{G}_m^\star[k,k+1] = 1,
\end{equation}
for $k=0,\ldots,L-1$ and $m=2,\ldots,L-1$.
Therefore, we established that $G_m^{\star}$ and $G_m$ agree on their main and first diagonals for every $m$. We can now repeat our previous arguments of the case of $m=1$ (using $\hat{G}^*_1 = \mathbb{E}[z_1 z_1^*]$) for $m=2,\ldots,L-1$, resulting in
\begin{equation}
z_m[k] = z_m[k+1],
\end{equation}
for $k=0,\ldots,L-1$ and $m=2,\ldots,L-1$, almost surely. Therefore,
\begin{equation}
\hat{G}_m^\star = \mathbf{1} \cdot \mathbf{1}^T,
\end{equation}
and consequently
\begin{equation}
{G}_m^\star = d_m d_m^* = G_m,
\end{equation}
for all $m=2,\ldots,L-1$.
\end{proof}

The next lemma establishes that problem~\eqref{eq:optim for diagonals} is robust to errors in the estimation of $P_y$ and $T_y$.
\begin{lem}[Stability of~\eqref{eq:optim for diagonals}] \label{lem: stability of optim for diagonals}
Suppose that $|\hat{\Sigma}_x[i,j]|>0$ for all $i,j\in\{0,\ldots,L-1\}$. If $\widetilde{T}_y\underset{N\rightarrow\infty, \; \text{a.s.}}{\longrightarrow} T_y$ and $\widetilde{P}_y\underset{N\rightarrow\infty, \; \text{a.s.}}{\longrightarrow} P_y$ (element-wise), then
\begin{align}
\widetilde{G}_m \underset{N\rightarrow\infty, \; \text{a.s.}}{\longrightarrow} G_m,
\end{align}
for $m=1,\ldots,L-1$.
\end{lem}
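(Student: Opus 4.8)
The plan is to treat this as a standard stability-of-minimizers result for a family of convex programs. Observe that the objective in \eqref{eq:optim for diagonals} is a quadratic polynomial in the entries of $\{G'_m\}$ whose coefficients depend continuously on the data $(\widetilde{T}_y,\widetilde{P}_y)$, that the feasible set (the product of positive-semidefinite cones over $m=1,\ldots,L-1$) does not depend on $N$, and that by Lemma~\ref{lem:optim for diagonals clean case unique min} the limiting problem (with $\widetilde{T}_y=T_y$, $\widetilde{P}_y=P_y$) has a \emph{unique} minimizer $\{G_m\}$ attaining objective value $0$. The goal is to upgrade this uniqueness, together with convergence of the objective, into convergence of the minimizers. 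I would work on a fixed probability-one event on which $\widetilde{T}_y\rightarrow T_y$ and $\widetilde{P}_y\rightarrow P_y$ hold, rendering all subsequent statements deterministic. Write $f_N$ for the objective of \eqref{eq:optim for diagonals} with data $(\widetilde{T}_y,\widetilde{P}_y)$ and $f_\infty$ for the same objective with $(T_y,P_y)$.

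First I would bound the optimal value. Since the clean minimizer $\{G_m\}_{m=1}^{L-1}$ is feasible (each $G_m=d_md_m^*\succeq 0$), plugging it into $f_N$ and using $\widetilde{T}_y\rightarrow T_y$ together with $\widetilde{P}_y\widetilde{P}_y^T\rightarrow P_yP_y^T$ shows $f_N(\{G_m\})\rightarrow f_\infty(\{G_m\})=0$. As $\{\widetilde{G}_m\}$ is a minimizer, the optimal value obeys $f_N(\{\widetilde{G}_m\})\le f_N(\{G_m\})\rightarrow 0$. Next I would establish compactness of the minimizers, uniformly for large $N$. Isolating the residual at $k_2=k_1$ gives the quantity $\widetilde{T}_y[k_1,k_1+m,k_1+m]-G'_0[k_1,k_1+m]-\widetilde{G}_m[k_1,k_1]$, where $G'_0=\widetilde{P}_y\widetilde{P}_y^T$ is fixed and convergent; since $f_N(\{\widetilde{G}_m\})$ is bounded, each such residual is bounded, so every diagonal entry $\widetilde{G}_m[k_1,k_1]$ is bounded. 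The constraint $\widetilde{G}_m\succeq 0$ then yields $|\widetilde{G}_m[i,j]|\le\sqrt{\widetilde{G}_m[i,i]\,\widetilde{G}_m[j,j]}$, so the whole matrices $\widetilde{G}_m$ lie in a fixed compact set for all sufficiently large $N$.

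Finally I would close by a subsequence argument. Suppose $\widetilde{G}_{m_0}\not\rightarrow G_{m_0}$ for some $m_0$; then along a subsequence $\Vert\widetilde{G}_{m_0}-G_{m_0}\Vert_F\ge\epsilon$. By the compactness just established, pass to a further subsequence along which $\widetilde{G}_m\rightarrow G_m^{\infty}$ for every $m$, with $G_m^{\infty}\succeq 0$ (the cone is closed) and $\Vert G_{m_0}^{\infty}-G_{m_0}\Vert_F\ge\epsilon$. Because $f_N\rightarrow f_\infty$ uniformly on this compact set (the coefficient differences $\widetilde{T}_y-T_y$ and $\widetilde{P}_y\widetilde{P}_y^T-P_yP_y^T$ vanish) while $f_N(\{\widetilde{G}_m\})\rightarrow 0$, continuity gives $f_\infty(\{G_m^{\infty}\})=0$, so $\{G_m^{\infty}\}$ is a minimizer of the limiting problem. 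Lemma~\ref{lem:optim for diagonals clean case unique min} then forces $G_m^{\infty}=G_m$ for all $m$, contradicting $\Vert G_{m_0}^{\infty}-G_{m_0}\Vert_F\ge\epsilon$. Hence $\widetilde{G}_m\rightarrow G_m$ almost surely.

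I expect the main obstacle to be the uniform-in-$N$ compactness of the minimizers: one must verify carefully that the diagonal residual bounds genuinely control all entries through positive semidefiniteness, and that this bound is uniform once $(\widetilde{T}_y,\widetilde{P}_y)$ lies in a fixed neighborhood of $(T_y,P_y)$ and the optimal value is small. Once compactness is in hand, the convergence of the objective, the subsequence extraction, and the appeal to uniqueness are all routine.
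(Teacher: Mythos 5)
Your proof is correct, and its skeleton is the same as the paper's: both arguments rest on feasibility of the clean solution $\{G_m\}$ to bound the optimal value of the perturbed program, and both close by invoking the uniqueness of the clean minimizer from Lemma~\ref{lem:optim for diagonals clean case unique min}. The one genuine difference is how you pass from ``the objective value tends to zero'' to ``the minimizers converge.'' The paper first transfers the bound to the \emph{clean} objective evaluated at the perturbed minimizer (via a reverse triangle inequality) and then cites Corollary~27.2.2 of Rockafellar, which packages the fact that a closed convex function with a unique minimizer has bounded level sets, so value convergence forces argument convergence. You instead keep the bound on the perturbed objective and supply the missing compactness by hand: the $k_2=k_1$ residuals pin down the diagonal entries of $\widetilde{G}_m$ (since $\widetilde{T}_y$ and $G'_0=\widetilde{P}_y\widetilde{P}_y^T$ are convergent and the total residual is bounded), and positive semidefiniteness then controls the off-diagonal entries via $|\widetilde{G}_m[i,j]|\le\sqrt{\widetilde{G}_m[i,i]\,\widetilde{G}_m[j,j]}$; a subsequence extraction plus uniform convergence of $f_N$ to $f_\infty$ on the resulting compact set finishes the argument. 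Your route is more self-contained and makes the source of the level-set boundedness explicit and problem-specific, at the cost of a slightly longer argument; the paper's route is shorter but leans on a black-box convex-analysis result. Both are valid.
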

\begin{proof}
For convenience, we formulate a problem equivalent to~\eqref{eq:optim for diagonals} in matrix-vector notation. Let ${t}\in\mathbb{C}^{L^3}$, $\widetilde{t}\in\mathbb{C}^{L^3}$, $g_0\in\mathbb{C}^{L^2}$, $\widetilde{g}_0\in\mathbb{C}^{L^2}$, $g\in\mathbb{C}^{L^3-L^2}$, $\widetilde{g}\in\mathbb{C}^{L^3-L^2}$ be vectors obtained from vectorizing $T_y$, $\widetilde{T}_y$, $G_0$, $\widetilde{G}_0 := \widetilde{P}_y \widetilde{P}_y^T$, $\{G_m\}_{m=1}^{L-1}$, and $\{\widetilde{G}_m\}_{m=1}^{L-1}$, respectively.
Then, the set of equations~\eqref{eq:T_y expression G_m}
\begin{equation}
{T}_y[k_1,k_1+m,k_2+m] = G_m[k_1,k_2] + G_{k_2-k_1}[k_1,k_1+m],
\end{equation}
for $k_1,k_2,m = 0,\ldots,L-1$, can be written in matrix form as
\begin{equation}
t = A_0 g_0 + A g, \label{eq: trispectrum equations matrix form}
\end{equation}
where $A_0\in\mathbb{R}^{L^3\times L^2}$, $A\in\mathbb{R}^{L^3\times (L^3-L^2)}$ are suitable matrices (whose exact expressions are not important for this proof).
Next, we define the following functions:
\begin{align}
\widetilde{J}(g^{'}) := \left\Vert \widetilde{t} - A_0 \widetilde{g}_0 - A g^{'} \right\Vert, \\
{J}(g^{'}) := \left\Vert {t} - A_0 {g}_0 - A g^{'} \right\Vert,
\end{align}
for $g^{'}\in\mathbb{C}^{L^3-L^2}$ obtained by vectorizing $\{G^{'}_{m}\}_{m=1}^{L-1}$ from~\eqref{eq:optim for diagonals}, hence satisfying the semidefinite constraints associated with $G^{'}_{m} \succeq 0$ for $m=1,\ldots,L-1$.
Recall from~\eqref{eq:optim for diagonals} that $\widetilde{g}$ is a minimizer of $\widetilde{J}$.
Therefore,
\begin{equation}
\widetilde{J}(\widetilde{g}) \leq \widetilde{J}(g) = \left\Vert \widetilde{t} - A_0 \widetilde{g}_0 - A g \right\Vert,
\end{equation}
which together with~\eqref{eq: trispectrum equations matrix form} gives
\begin{equation}
\widetilde{J}(\widetilde{g}) \leq \left\Vert \widetilde{t} - A_0 \widetilde{g}_0 - A g \right\Vert = \left\Vert \widetilde{t}-t - A_0 \left( \widetilde{g}_0 - g_0\right) \right\Vert. \label{eq:J upper bound}
\end{equation}
On the other hand, by the reverse triangle inequality it follows that
\begin{align}
\widetilde{J}(\widetilde{g}) &= \left\Vert \widetilde{t} - A_0 \widetilde{g}_0 - A \widetilde{g} \right\Vert = \left\Vert t - A_0 {g}_0 - A \widetilde{g} +(\widetilde{t}-t)  - A_0 (\widetilde{g}_0 - {g}_0 )  \right\Vert \nonumber \\
&\geq \left\vert \left\Vert t - A_0 {g}_0 - A \widetilde{g} \right\Vert - \left\Vert \widetilde{t}-t - A_0 \left( \widetilde{g}_0 - g_0\right) \right\Vert \right\vert, \label{eq:J lower bound}
\end{align}
and by combining~\eqref{eq:J lower bound} and~\eqref{eq:J upper bound} we have
\begin{equation}
J(\widetilde{g}) = \left\Vert t - A_0 {g}_0 - A \widetilde{g} \right\Vert \leq 2 \left\Vert \widetilde{t}-t - A_0 \left( \widetilde{g}_0 - g_0\right) \right\Vert  \leq 2\left\Vert \widetilde{t}-t \right\Vert + 2\left\Vert A_0 \right\Vert \cdot \left\Vert \widetilde{g}_0 - g_0 \right\Vert.
\end{equation}
Therefore, if $\widetilde{T}_y \underset{N\rightarrow\infty, \; \text{a.s.}}{\longrightarrow} T_y$, $\widetilde{P}_y \underset{N\rightarrow\infty, \; \text{a.s.}}{\longrightarrow} P_y$, we have that $\left\Vert \widetilde{t}-t \right\Vert \underset{N\rightarrow\infty, \; \text{a.s.}}{\longrightarrow} 0$, $\left\Vert \widetilde{g}_0 - g_0 \right\Vert \underset{N\rightarrow\infty, \; \text{a.s.}}{\longrightarrow} 0$, and thus
\begin{equation}
J( \widetilde{g}) \underset{N\rightarrow\infty, \; \text{a.s.}}{\longrightarrow} 0. \label{eq:J converges to zero}
\end{equation}
Last, since $J$ is a non-negative and convex function over a convex domain with a unique minimizer (Lemma~\ref{lem:optim for diagonals clean case unique min}), it follows from~\eqref{eq:J converges to zero} that (see Corollary 27.2.2 in~\cite{rockafellar1970convex})
\begin{equation}
\widetilde{g} \underset{N\rightarrow\infty, \; \text{a.s.}}{\longrightarrow} g,
\end{equation}
or equivalently
\begin{equation}
\{ \widetilde{G}_m \}_{m=1}^{L-1} \underset{N\rightarrow\infty, \; \text{a.s.}}{\longrightarrow} \{ G_m \}_{m=1}^{L-1}.
\end{equation}
\end{proof}

Since $\widetilde{P}_y$ and $\widetilde{T}_y$ (of~\eqref{eq: P_y estimator} and~\eqref{eq: T_y estimator}) are consistent estimators for $P_y$ and $T_y$, respectively, we have that
\begin{equation}
\widetilde{P}_y \underset{N\rightarrow\infty, \; \text{a.s.}}{\longrightarrow} P_y, \qquad \widetilde{T}_y \underset{N\rightarrow\infty, \; \text{a.s.}}{\longrightarrow} T_y.
\end{equation}
Therefore, by Lemma~\ref{lem: stability of optim for diagonals} it follows that
\begin{equation}
\widetilde{G}_m \underset{N\rightarrow\infty, \; \text{a.s.}}{\longrightarrow} G_m = d_m d_m^*, \label{eq:G_m_tilde convergence}
\end{equation}
for $m=1,\ldots,L-1$. 
Note that $G_m$ is of rank one, with leading eigenvector $d_m/\Vert d_m \Vert$ and leading eigenvalue
\begin{equation}
\lambda_{\max} \{ G_m \} = \left\Vert d_m \right\Vert^2 > 0.
\end{equation}
Recall from~\eqref{eq:d_m estimator} that 
\begin{equation}
\widetilde{d}_m = \sqrt{\widetilde{\mu}_1^{(m)}} \widetilde{u}_1^{(m)},
\end{equation}
where $\widetilde{\mu}_1^{(m)}$ is the leading eigenvalue of $\widetilde{G}_m$ and $\widetilde{u}_1^{(m)}$ is its corresponding eigenvector.
Classical results in matrix perturbation theorey establish that $\widetilde{d}_m$ converges to $d_m$ almost surely. In particular, the Davis-Kahan theorem~\cite{yu2014useful,davis1970rotation} asserts that
\begin{equation}
\left\Vert \widetilde{d}_m/\Vert \widetilde{d}_m \Vert - e^{\imath \varphi_m} d_m/\Vert d_m \Vert \right\Vert = \left\Vert \widetilde{u}_1^{(m)} - e^{\imath \varphi} {u}_1^{(m)} \right\Vert \leq \frac{2\left\Vert \widetilde{G}_m - G_m \right\Vert_F}{\left\Vert d_m \right\Vert^2}, \label{eq:Davis-Kahan perturbation bound}
\end{equation}
for some $\varphi_m\in [0,2\pi)$, where ${u}_1^{(m)}$ is the leading eigenvector of $G_{m}$, and by Weyl~\cite{weyl1912asymptotische} 
\begin{equation}
\left\vert \Vert \widetilde{d}_m \Vert^2 - \Vert d_m \Vert^2 \right\vert = \left\vert \widetilde{\mu}_1^{(m)} - {\mu}_1^{(m)} \right\vert \leq \left\Vert \widetilde{G}_m - G_m \right\Vert_F, \label{eq:Weyl eigenvalue perturbation bound}
\end{equation}
where ${\mu}_1^{(m)}$ is the largest eigenvalue of $G_m$ (corresponding to ${u}_1^{(m)}$).
Hence, by~\eqref{eq:G_m_tilde convergence}, \eqref{eq:Davis-Kahan perturbation bound} and~\eqref{eq:Weyl eigenvalue perturbation bound} it follows that
\begin{equation}
\min_{\varphi\in [0,2\pi)}\left\Vert \widetilde{d}_m - e^{\imath \varphi} d_m\right\Vert \underset{N\rightarrow\infty, \; \text{a.s.}}{\longrightarrow} 0, \label{eq:d_m_tilde convergence}
\end{equation}
for $m=1,\ldots,L-1$. Last, by the definition of $\widetilde{C}_x$ in~\eqref{eq:C_x_tilde def}
\begin{align}
&\min_{\varphi_1,\ldots,\varphi_{L-1}\in [0,2\pi)}\left\Vert \widetilde{C}_x - \hat{\Sigma}_x \odot \operatorname{Circulant}\{1,e^{\imath \varphi_1},\ldots,e^{\imath \varphi_{L-1}}\} \right\Vert_F^2 \nonumber \\
&= \left\Vert \widetilde{P}_y - \sigma^2 - (d_0 - \sigma^2) \right\Vert^2 +  \sum_{m=1}^{L-1}\min_{\varphi_m\in [0,2\pi)}\left\Vert \widetilde{d}_m - e^{\imath \varphi_m} d_m \right\Vert^2 \nonumber \\
&= \left\Vert \widetilde{P}_y - P_y \right\Vert^2 +  \sum_{m=1}^{L-1}\min_{\varphi_m\in [0,2\pi)}\left\Vert \widetilde{d}_m - e^{\imath \varphi_m} d_m \right\Vert^2
\underset{N\rightarrow\infty, \; \text{a.s.}}{\longrightarrow} 0,
\end{align}
where we used the fact that $\hat{\Sigma}_x[k,k] = d_0[k] - \sigma^2 = P_y[k] - \sigma^2$ (see~\eqref{eq:d_m def} and~\eqref{eq:P_y expression diagonals}),~\eqref{eq:d_m_tilde convergence}, and the fact that $\widetilde{P}_y \underset{N\rightarrow\infty, \; \text{a.s.}}{\longrightarrow} P_y$, which concludes the proof.

\section{Proof of Lemma~\ref{lem:H_ii properties}} \label{appendix:proof of properties of H_ii}
First,~\eqref{eq:H_ii expression} follows from~\eqref{eq:X def} since a circulant matrix is invariant to $R_{i,i}$ (for any $i$), hence
\begin{equation}
\operatorname{Circulant} \{ [1, e^{\imath \varphi_1},\ldots, e^{\imath \varphi_{L-1}} ] \} \odot \overline{R_{i,i}\{ \operatorname{Circulant} \{ [1, e^{\imath \varphi_1},\ldots, e^{\imath \varphi_{L-1}} ] \} \} } = \mathbf{1}_{L\times L},
\end{equation}
where $\mathbf{1}_{L\times L}$ is a $L\times L$ matrix of ones. Second, the fact that $H_{i,i}$ is Hermitian follows from
\begin{align}
\overline{H_{i,i}[k,m]} &= \overline{\hat{\Sigma}_x[k,m]} \odot {R_{i,i}\{\hat{\Sigma}_x\}[k,m]} = \hat{\Sigma}_x[m,k] \odot \hat{\Sigma}_x[\operatorname{mod} (k-i,L),\operatorname{mod} (m-i,L)] 
\nonumber \\ 
&= \hat{\Sigma}_x[m,k] \odot \overline{\hat{\Sigma}_x[\operatorname{mod} (m-i,L),\operatorname{mod} (k-i,L)]} = \hat{\Sigma}_x[m,k] \odot {R_{i,i}\{\hat{\Sigma}_x\}[m,k]} = H_{i,i}[m,k],
\end{align}
where we used~\eqref{eq:H_ii expression} and~\eqref{eq:R_ij def}. Third, by~\eqref{eq:H_ii expression}, $H_{i,i}$ is PSD since the Hadamard product of two PSD matrices is also PSD due to the Schur product theorem (see Theorem 5.2.1 in~\cite{roger1994topics}). Last,~\eqref{eq:H_ii rank bound} is due to a well-known bound on the rank of the Hadamard product (see Theorem 5.1.7 in~\cite{roger1994topics}).

\section{Proof of Lemma~\ref{lem:S properties}} \label{appendix:Proof of Lemma S properties}
By the definition of the $L\times L$ blocks of $S$ in~\eqref{eq:Kron product block formula}, we have that
\begin{equation}
S^{(i,j)}[k_1,k_2] = S[k_1 +iL,k_2+jL] = \widetilde{X}[k_1,k_2] \overline{\widetilde{X}[\operatorname{mod}(k_1-i,L),\operatorname{mod}(k_2-j,L)]}. \label{eq:S explitic form}
\end{equation}
It is easy to verify from~\eqref{eq:S explitic form} that $S$ is Hermitian if $\widetilde{X}$ is Hermitian (this follows immediately from interchanging $i$ with $j$, and $k_1$ with $k_2$).
Next, a key observation for this proof is that $S$ is similar to the matrix $\overline{\widetilde{X}} \otimes \widetilde{X}$, where $\otimes$ is the Kronecker product. This is due to the fact that
\begin{equation}
\left\{ \overline{\widetilde{X}} \otimes \widetilde{X} \right\}[k_1+i L,k_2 +j L] = \widetilde{X}[k_1,k_2] \cdot \overline{\widetilde{X}[i,j]},
\end{equation}
and hence $S$ can be transformed into $\overline{\widetilde{X}} \otimes \widetilde{X}$ by an appropriate permutation of its rows and columns. Specifically, we can write 
\begin{equation}
S[k_1+i L,k_2 +j L] = \left\{ \overline{\widetilde{X}} \otimes \widetilde{X} \right\}[k_1+L \cdot \operatorname{mod}(k_1-i,L),k_2 +L \cdot \operatorname{mod}(k_2-j,L)],
\end{equation}
and it follows that there exists a permutation matrix $P\in\mathbb{R}^{L^2\times L^2}$ such that
\begin{equation}
P S P^T = \overline{\widetilde{X}} \otimes \widetilde{X}.
\end{equation}
It is well-known that the eigenvalues of $\overline{\widetilde{X}} \otimes \widetilde{X}$ are given by the pair-wise products between the eigenvalues of $\widetilde{X}$ and the eigenvalues of $\overline{\widetilde{X}}$ (see~\cite{roger1994topics}). Hence, if $\widetilde{X}$ is Hermitian and PSD, then $\overline{\widetilde{X}} \otimes \widetilde{X}$ is also Hermitian and PSD, and consequently so is $S$ by its similarity to $\overline{\widetilde{X}} \otimes \widetilde{X}$.

\section{Proof of Theorem~\ref{thm:consistency of step 2, direct approach}} \label{appendix:Proof of consistency of step 2}
By Theorem~\ref{thm:consistency of step 1}, we can write
\begin{equation}
\widetilde{C}_x = X^{(N)} + E^{(N)}, \label{eq:C_tilde_x est err}
\end{equation}
where $X^{(N)}$ is equal to $X$ from~\eqref{eq:X def} but with angles $\varphi_1,\ldots,\varphi_{L-1}$ that may depend on $N$, and $\Vert E^{(N)} \Vert_F \underset{N\rightarrow\infty, \; \text{a.s.}}{\longrightarrow} 0$. For simplicity of presentation, we omit the superscript $(\cdot)^{(N)}$ in $X^{(N)}$ and $E^{(N)}$ from all subsequent derivations. Let $W$ of~\eqref{eq:A_mathcal_tilde def} be the matrix constructed from $X$ as described in Section~\ref{section: step 2}, and let $\widetilde{W}$ be a matrix analogous to $W$ when using $\widetilde{C}_x$ instead of $X$.
We now analyze the different quantities involved in the construction of $\widetilde{W}$. By~\eqref{eq:H_ij_tilde def} we have that
\begin{equation}
\widetilde{H}_{i,j} = \widetilde{C}_x \odot \overline{R_{i,j}\{\widetilde{C}_x\}} = H_{i,j} + E \odot \overline{R_{i,j}\{{X}\}} + X \odot \overline{R_{i,j}\{E\}} + E \odot \overline{R_{i,j}\{E\}}.
\end{equation}
Using the bound $\Vert A\odot B\Vert_F\leq \Vert A\Vert_F \Vert B\Vert_F$ (see~\cite{roger1994topics}), it follows that
\begin{equation}
\left\Vert \widetilde{H}_{i,j} - H_{i,j} \right\Vert_F \leq 2 \left\Vert E \right\Vert_F \left\Vert \hat{\Sigma}_x \right\Vert_F + \left\Vert E \right\Vert_F^2 \underset{N\rightarrow\infty, \; \text{a.s.}}{\longrightarrow} 0, \label{eq:H_ij_tilde convergence}
\end{equation}
where we used the fact that $\left\Vert X \right\Vert_F = \left\Vert \hat{\Sigma}_x \right\Vert_F$.
Since $H_{i,i}$ admits $r^2$ distinct and non-zero eigenvalues (see the assumptions of Theorem~\ref{thm:consistency of step 2, direct approach}), we have from the Davis-Kahan Theorem~\cite{yu2014useful,davis1970rotation} that
\begin{equation}
\min_{\vartheta_1,\ldots,\vartheta_{r^2}\in [0,2\pi)}\left\Vert \widetilde{V}^{(i)} - V^{(i)} \cdot \operatorname{diag}\{e^{\imath \vartheta_1},\ldots,e^{\imath \vartheta_{r^2}}\} \right\Vert_F \underset{N\rightarrow\infty, \; \text{a.s.}}{\longrightarrow} 0, \label{eq:V_tilde convergence}
\end{equation}
where $V^{(i)}$ and $\widetilde{V}^{(i)}$ are $L\times r^2$ matrices whose columns are the first $r^2$ eigenvectors (corresponding to the largest eigenvalues) of $H_{i,i}$ and $\widetilde{H}_{i,i}$, respectively. Next, define the matrices $\widetilde{Z}^{(i)}\in\mathbb{C}^{L^2\times r^4}$ and the vectors $\widetilde{M}^{(i)}_m \in \mathbb{C}^{L^2}$, for $i,m\in \{0,\ldots,L-1 \}$, analogously to ${Z}^{(i)}$ and ${M}^{(i)}_m$ of~\eqref{eq:M_tilde def}, by
\begin{equation}
\begin{aligned} 
\widetilde{Z}^{(i)} &= \overline{\widetilde{V}^{(i+1)}} \otimes \widetilde{V}^{(i)},  \\
\widetilde{M}^{(i)}_m &= \operatorname{vec}\left\{\widetilde{H}_{i,i+1}\right\} \odot \operatorname{vec}\left\{\operatorname{Circulant}\{\mathbf{e}_m\} \right\}, \label{eq:Z_tilde and M_tilde def}
\end{aligned}
\end{equation}
where $\otimes$ is the Kronecker product, $\mathbf{e}_m$ is the $m$'th indicator vector (with a single value of $1$ at the $m$'th entry), and $\operatorname{vec}\{\cdot\}$ is the operation of vectorizing a matrix by stacking its columns on top of each other. From~\eqref{eq:V_tilde convergence}, we can write
\begin{equation}
\widetilde{V}^{(i)} = V^{(i)} \cdot \operatorname{diag}\{e^{\imath \vartheta^{(i)}_1},\ldots,e^{\imath \vartheta^{(i)}_{r^2}}\} + E_V^{(i)},
\end{equation}
where $\Vert E_V^{(i)} \Vert_F \underset{N\rightarrow\infty, \; \text{a.s.}}{\longrightarrow} 0$ (and the angles $\vartheta^{(i)}_1,\ldots,\vartheta^{(i)}_{r^2}$ depend on $N$). Therefore, we can write
\begin{align}
&\widetilde{Z}^{(i)} = \overline{\widetilde{V}^{(i+1)}} \otimes \widetilde{V}^{(i)} = \left( \overline{V^{(i+1)}} \cdot \operatorname{diag}\{e^{-\imath \vartheta^{(i+1)}_1},\ldots,e^{-\imath \vartheta^{(i+1)}_{r^2}}\} + \overline{E_V^{(i+1)}}\right) \otimes \left( V^{(i)} \cdot \operatorname{diag}\{e^{\imath \vartheta^{(i)}_1},\ldots,e^{\imath \vartheta^{(i)}_{r^2}}\} + E_V^{(i)}\right)  \nonumber \\
&= \left( \overline{V^{(i+1)}} \cdot \operatorname{diag}\{e^{-\imath \vartheta^{(i+1)}_1},\ldots,e^{-\imath \vartheta^{(i+1)}_{r^2}}\} \right) \otimes \left( V^{(i)} \cdot \operatorname{diag}\{e^{\imath \vartheta^{(i)}_1},\ldots,e^{\imath \vartheta^{(i)}_{r^2}}\}\right) + \nonumber \\
&  \overline{E_V^{(i+1)}} \otimes \left( V^{(i)} \cdot \operatorname{diag}\{e^{\imath \vartheta^{(i)}_1},\ldots,e^{\imath \vartheta^{(i)}_{r^2}}\}\right)  + \left( \overline{V^{(i+1)}} \cdot \operatorname{diag}\{e^{-\imath \vartheta^{(i+1)}_1},\ldots,e^{-\imath \vartheta^{(i+1)}_{r^2}}\} \right) \otimes E_V^{(i)}  + \overline{E_V^{(i+1)}} \otimes E_V^{(i)}   \nonumber \\
&= \left( \overline{V^{(i+1)}} \otimes V^{(i)}\right) \cdot \left(  \operatorname{diag}\{e^{-\imath \vartheta^{(i+1)}_1},\ldots,e^{-\imath \vartheta^{(i+1)}_{r^2}}\}  \otimes \operatorname{diag}\{e^{\imath \vartheta^{(i)}_1},\ldots,e^{\imath \vartheta^{(i)}_{r^2}}\} \right) + \nonumber \\
&\overline{E_V^{(i+1)}} \otimes \left( V^{(i)} \cdot \operatorname{diag}\{e^{\imath \vartheta^{(i)}_1},\ldots,e^{\imath \vartheta^{(i)}_{r^2}}\}\right)  + \left( \overline{V^{(i+1)}} \cdot \operatorname{diag}\{e^{-\imath \vartheta^{(i+1)}_1},\ldots,e^{-\imath \vartheta^{(i+1)}_{r^2}}\} \right) \otimes E_V^{(i)}  + \overline{E_V^{(i+1)}} \otimes E_V^{(i)}  \nonumber \\
&= Z^{(i)} \cdot  \operatorname{diag}\left\{[e^{-\imath \vartheta^{(i+1)}_1},\ldots,e^{-\imath \vartheta^{(i+1)}_{r^2}}] \otimes [e^{\imath \vartheta^{(i)}_1},\ldots,e^{\imath \vartheta^{(i)}_{r^2}}] \right\}  + \nonumber \\
&\overline{E_V^{(i+1)}} \otimes \left( V^{(i)} \cdot \operatorname{diag}\{e^{\imath \vartheta^{(i)}_1},\ldots,e^{\imath \vartheta^{(i)}_{r^2}}\}\right)  + \left( \overline{V^{(i+1)}} \cdot \operatorname{diag}\{e^{-\imath \vartheta^{(i+1)}_1},\ldots,e^{-\imath \vartheta^{(i+1)}_{r^2}}\} \right) \otimes E_V^{(i)}  + \overline{E_V^{(i+1)}} \otimes E_V^{(i)} ,
\end{align}
where we used the mixed-product property of the Kronecker product (i.e. $(A\cdot B)\otimes (C\cdot D) = (A\otimes C)\cdot (B\otimes D)$, see~\cite{roger1994topics}). By using the bound $\Vert A\otimes B\Vert_F\leq \Vert A\Vert_F \Vert B\Vert_F$ (see~\cite{roger1994topics}) together with $\Vert E_V^{(i)} \Vert_F \underset{N\rightarrow\infty, \; \text{a.s.}}{\longrightarrow} 0$, we have that
\begin{equation}
\min_{\gamma^{(i)}_1,\ldots,\gamma^{(i)}_{r^4}\in [0, 2\pi)}\left\Vert \widetilde{Z}^{(i)} - {Z}^{(i)} \cdot \operatorname{diag}\{e^{\imath \gamma^{(i)}_1},\ldots,e^{\imath \gamma^{(i)}_{r^4}}\}\right\Vert_F \underset{N\rightarrow\infty, \; \text{a.s.}}{\longrightarrow} 0, \label{eq:Z_tilde convergence}
\end{equation}
for $i\in \{0,\ldots,L-1 \}$.
Next, from~\eqref{eq:H_ij_tilde convergence} and~\eqref{eq:Z_tilde and M_tilde def} it immediately follows that
\begin{equation}
\left\Vert \widetilde{M}^{(i)}_m - {{M}^{(i)}_m}\right\Vert \underset{N\rightarrow\infty, \; \text{a.s.}}{\longrightarrow} 0, \label{eq:M_tilde convergence}
\end{equation}
for $i,m\in \{0,\ldots,L-1 \}$.
Recall that $\widetilde{W}$ is formed according to the right-hand side of~\eqref{eq:A_mathcal_tilde def} when replacing ${{Z}^{(i)}}$ and ${{M}^{(i)}_m}$ with $\widetilde{Z}^{(i)}$ and $\widetilde{M}^{(i)}_m$, respectively. Therefore, by~\eqref{eq:Z_tilde convergence} and~\eqref{eq:M_tilde convergence} it follows that
\begin{equation}
\min_{\gamma^{(0)}_1,\ldots,\gamma^{(0)}_{r^4},\ldots,\gamma^{(L-1)}_1,\ldots,\gamma^{(1)}_{r^4}\in [0, 2\pi)}\left\Vert \widetilde{W} - W \cdot \operatorname{diag}\{\mathbf{1}_L^T,e^{\imath \gamma^{(0)}_1},\ldots,e^{\imath \gamma^{(0)}_{r^4}},{\ldots},e^{\imath \gamma^{(L-1)}_1},\ldots,e^{\imath \gamma^{(L-1)}_{r^4}}\}\right\Vert_F \underset{N\rightarrow\infty, \; \text{a.s.}}{\longrightarrow} 0, \label{eq:A_tilde convergence}
\end{equation}
where $\mathbf{1}_L$ is a column vector of $L$ ones.
Recall that $\mathcal{V}$ and $\widetilde{\mathcal{V}}$ are the right singular vectors of $W$ and $\widetilde{W}$ corresponding to their smallest singular values, respectively. Let $\mathcal{V}_L\in\mathbb{C}^L$ and $\widetilde{\mathcal{V}}_L\in\mathbb{C}^L$ be the first $L$ elements of $\mathcal{V}$ and $\widetilde{\mathcal{V}}$, respectively. Note that the matrices $W$ and $W \cdot \operatorname{diag}\{\mathbf{1}_L^T,e^{\imath \gamma^{(0)}_1},\ldots,e^{\imath \gamma^{(0)}_{r^4}},\ldots,e^{\imath \gamma^{(L-1)}_1},\ldots,e^{\imath \gamma^{(L-1)}_{r^4}}\}$ agree in their singular values and in the first $L$ entries of their singular vectors. Therefore, from~\eqref{eq:A_tilde convergence} it follows that
\begin{equation}
\min_{\varphi\in [0, 2\pi)}\left\Vert \widetilde{\mathcal{V}}_L - {\mathcal{V}}_L \cdot e^{\imath \varphi} \right\Vert \underset{N\rightarrow\infty, \; \text{a.s.}}{\longrightarrow} 0, \label{eq:nu_tilde first L entries convergence}
\end{equation}
where we used the Davis-Kahan Theorem~\cite{yu2014useful,davis1970rotation} together with the fact that the smallest singular value of $W$ is zero while its second-smallest singular value is strictly positive (resulting in a spectral gap for the smallest singular value). 
Since the elements of $\mathcal{V}_L$ are bounded away from zero (they have magnitudes of $1$ according to~\eqref{eq:V_mathcal_tilde expression}), from~\eqref{eq:nu_tilde first L entries convergence} it follows that
\begin{equation}
\min_{\varphi\in [0, 2\pi)}\sum_{k=0}^{L-1}\left\vert \operatorname{arg}\{\widetilde{\mathcal{V}}[k]\} - \operatorname{arg}\{{\mathcal{V}}[k]\} - \varphi \right\vert^2 \underset{N\rightarrow\infty, \; \text{a.s.}}{\longrightarrow} 0. \label{eq:nu_tilde arg convergence}
\end{equation}
Let $\widetilde{\widetilde{\varphi}}_m$ be analogous to $\widetilde{\varphi}_m$ from~\eqref{eq:step 2 angle estimation procedure} when replacing ${\mathcal{V}}$ with $\widetilde{\mathcal{V}}$, and fixing $\widetilde{\widetilde{\varphi}}_0 = 0$, i.e.
\begin{equation}
\widetilde{\widetilde{\varphi}}_m = - \sum_{\ell=1}^m \operatorname{arg}\left\{ \widetilde{\mathcal{V}}[\ell] \right\} + \frac{m}{L}\sum_{\ell=0}^{L-1} \operatorname{arg}\left\{ \widetilde{\mathcal{V}}[\ell] \right\} + \frac{2\pi k m}{L}. \label{eq:varphi_tilde_tilde procedure}
\end{equation}
Then, it follows from~\eqref{eq:varphi_tilde_tilde procedure},~\eqref{eq:step 2 angle estimation procedure} and~\eqref{eq:nu_tilde arg convergence} that
\begin{align}
&\left\vert \widetilde{\widetilde{\varphi}}_m - \widetilde{{\varphi}}_m \right\vert = \left\vert \sum_{\ell=1}^m \left( \operatorname{arg}\left\{ \widetilde{\mathcal{V}}[\ell] \right\} - \operatorname{arg}\left\{ {\mathcal{V}}[\ell] \right\}\right) - \frac{m}{L}\sum_{\ell=0}^{L-1} \left( \operatorname{arg}\left\{ \widetilde{\mathcal{V}}[\ell] \right\} - \operatorname{arg}\left\{ {\mathcal{V}}[\ell] \right\} \right) \right\vert \nonumber \\
&= \left\vert \sum_{\ell=1}^m \left( \operatorname{arg}\left\{ \widetilde{\mathcal{V}}[\ell] \right\} - \operatorname{arg}\left\{ {\mathcal{V}}[\ell] \right\} -\varphi \right) + m\varphi - \frac{m}{L}\sum_{\ell=0}^{L-1} \left( \operatorname{arg}\left\{ \widetilde{\mathcal{V}}[\ell] \right\} - \operatorname{arg}\left\{ {\mathcal{V}}[\ell] \right\} -\varphi \right) - m\varphi \right\vert \nonumber \\
&= \min_{\varphi\in [0, 2\pi)} \left\vert \sum_{\ell=1}^m \left( \operatorname{arg}\left\{ \widetilde{\mathcal{V}}[\ell] \right\} - \operatorname{arg}\left\{ {\mathcal{V}}[\ell] \right\} -\varphi \right) - \frac{m}{L}\sum_{\ell=0}^{L-1} \left( \operatorname{arg}\left\{ \widetilde{\mathcal{V}}[\ell] \right\} - \operatorname{arg}\left\{ {\mathcal{V}}[\ell] \right\} -\varphi \right) \right\vert \nonumber \\
&\leq \min_{\varphi\in [0, 2\pi)} \sum_{\ell=1}^m \left\vert \operatorname{arg}\left\{ \widetilde{\mathcal{V}}[\ell] \right\} - \operatorname{arg}\left\{ {\mathcal{V}}[\ell] \right\} -\varphi \right\vert + \min_{\varphi\in [0, 2\pi)} \frac{m}{L}\sum_{\ell=0}^{L-1} \left\vert \operatorname{arg}\left\{ \widetilde{\mathcal{V}}[\ell] \right\} - \operatorname{arg}\left\{ {\mathcal{V}}[\ell] \right\} -\varphi  \right\vert 
\underset{N\rightarrow\infty, \; \text{a.s.}}{\longrightarrow} 0, 
\end{align}
for $m=1,\ldots,L-1$, which together with Proposition~\ref{prop:low-rank procedure derivation summary} implies that
\begin{equation}
\min_{\ell=0,\ldots,L-1} \sum_{m=1}^{L-1}\left\vert \widetilde{\widetilde{\varphi}}_m - \varphi_m - \frac{2\pi \ell m}{L}  \right\vert^2 \underset{N\rightarrow\infty, \; \text{a.s.}}{\longrightarrow} 0. \label{eq:varphi_tilde_tilde convergence}
\end{equation}
With some abuse of notation, let $\widetilde{\hat{\Sigma}}_x$ be as in~\eqref{eq:covariance estimator with phase} with $\widetilde{\widetilde{\varphi}}_m$ replacing $\widetilde{\varphi}_m$. Then, employing~\eqref{eq:C_tilde_x est err} and~\eqref{eq:X def} yields
\begin{align}
\widetilde{\hat{\Sigma}}_x &= \widetilde{C}_x \odot \operatorname{Circulant} \{ [1, e^{-\imath \widetilde{\widetilde{\varphi}}_1},\ldots, e^{-\imath \widetilde{\widetilde{\varphi}}_{L-1}} ] \} \nonumber \\ 
&= X \odot \operatorname{Circulant} \{ [1, e^{-\imath \widetilde{\widetilde{\varphi}}_1},\ldots, e^{-\imath \widetilde{\widetilde{\varphi}}_{L-1}} ] \} + E \odot \operatorname{Circulant} \{ [1, e^{-\imath \widetilde{\widetilde{\varphi}}_1},\ldots, e^{-\imath \widetilde{\widetilde{\varphi}}_{L-1}} ] \} \nonumber \\ 
&=\hat{\Sigma}_x \odot \operatorname{Circulant} \{ [1, e^{\imath (\varphi_1 - \widetilde{\widetilde{\varphi}}_1)},\ldots, e^{\imath (\varphi_{L-1} - \widetilde{\widetilde{\varphi}}_{L-1})} ] \} + E \odot \operatorname{Circulant} \{ [1, e^{-\imath \widetilde{\widetilde{\varphi}}_1},\ldots, e^{-\imath \widetilde{\widetilde{\varphi}}_{L-1}} ] \}.
\end{align}
Hence, by the above equation together with~\eqref{eq:varphi_tilde_tilde convergence},~\eqref{eq:Omega elements with DFT vectors expression},  and the fact that $\Vert E \Vert_F\underset{N\rightarrow\infty, \; \text{a.s.}}{\longrightarrow} 0$, we have
\begin{equation}
\min_{\ell=0,\ldots,L-1}\left\Vert \widetilde{\hat{\Sigma}}_x - \operatorname{diag}(f_\ell)\cdot \hat{\Sigma}_x \cdot \operatorname{diag}(f_\ell^*) \right\Vert_F \underset{N\rightarrow\infty, \; \text{a.s.}}{\longrightarrow} 0,
\end{equation}
and~\eqref{eq:Sigma_x estimation consistency} in Theorem~\ref{thm:consistency of step 2, direct approach} follows in a straightforward manner (using $\widetilde{\Sigma}_x = F^* \widetilde{\hat{\Sigma}}_x F$).

\end{appendices}

\bibliographystyle{plain}
\bibliography{mybib}

%\theendnotes
\end{document}